\newtheorem{theorem}{Theorem}[section]
\newtheorem{lemma}[theorem]{Lemma}
\newtheorem{example}[theorem]{Example}
\newtheorem{remark}[theorem]{Remark}
\numberwithin{equation}{section}
\newcommand{\sq}{\varepsilon}
\newcommand{\ssq}{\sqrt{\varepsilon}}
\newcommand{\wph}{\bm{\mathbbm{W}}}
\newcommand{\uph}{\mathbbm{U}}
\newcommand{\pph}{\mathbbm{P}}
\newcommand{\qph}{\mathbbm{Q}}
\newcommand{\vph}{\bm{\mathbbm{z}}}
\newcommand{\vphu}{\mathbbm{v}}
\newcommand{\vphq}{\mathbbm{r}}
\newcommand{\vphp}{\mathbbm{s}}
\newcommand{\dual}[2]{\left\langle#1,#2\right\rangle}
\newcommand{\norm}   [1] {\left\Vert#1\right\Vert}
\newcommand{\enorm}   [1] {\interleave #1\interleave_{E}}
\newcommand{\average}[1] {\{\!\{#1\}\!\}}
\newcommand{\jump}   [1] {[\![#1]\!]}
\newcommand{\spc}{\mathcal{V}_N}
\newcommand{\ba}   [1] {\interleave #1\interleave_{B}}
\newcommand{\ureg}{\bar{u}}
\newcommand{\uep}{u_\sq}
\newcommand{\partdfij}{\partial_{x}^{i}\partial_{y}^{j}}
\newcommand{\prou}{\mathcal{I}}
\newcommand{\prop}{\mathcal{J}}
\newcommand{\proq}{\mathcal{K}}
\newcommand{\prog}{\mathcal{R}}
\newcommand{\prow}{\mathcal{P}}
\newcommand{\dreg}{\Omega_{22}}
\newcommand{\dws}{\Omega_{11}}
\newcommand{\dw}{\Omega_{12}}
\newcommand{\dwn}{\Omega_{13}}
\newcommand{\ds}{\Omega_{21}}
\newcommand{\dn}{\Omega_{23}}
\newcommand{\des}{\Omega_{31}}
\newcommand{\de}{\Omega_{32}}
\newcommand{\den}{\Omega_{33}}
\newcommand{\dxa}{\Omega^x_{1}}
\newcommand{\dxb}{\Omega^x_{2}}
\newcommand{\dxc}{\Omega^x_{3}}
\newcommand{\dya}{\Omega^y_{1}}
\newcommand{\dyb}{\Omega^y_{2}}
\newcommand{\dyc}{\Omega^y_{3}}
\title{Optimal balanced-norm error estimate of the LDG method for reaction-diffusion problems II: 
	the two-dimensional case with layer-upwind flux
	\thanks{The research of the first author was supported by NSFC grant 11801396,
		Natural Science Foundation of Jiangsu Province grant BK20170374
		and Qing Lan Project of Jiangsu University.
		The research of the second author was supported by NSFC grant 11801396.
		The research of the third author was supported by NSFC grants 12171025 and NSAF-U2230402.
}}
\author{Yao Cheng\thanks{School of Mathematical Sciences,
		Suzhou University of Science and Technology,
		Suzhou 215009, Jiangsu Province, China (\texttt{ycheng@usts.edu.cn}).}
	\and
	Xuesong Wang\thanks{School of Mathematical Sciences,
		Suzhou University of Science and Technology,
		Suzhou 215009, Jiangsu Province, China (\texttt{wangxuesong@post.usts.edu.cn}).}
	\and
	Martin Stynes\thanks{Applied and Computational Mathematics Division, Beijing Computational Science Research Center, Beijing 100193, China (\texttt{m.stynes@csrc.ac.cn}).}
}
\begin{document}

\maketitle

\begin{abstract}
A singularly perturbed reaction-diffusion problem posed on the unit square in $\mathbb{R}^2$
is solved numerically by a local discontinuous Galerkin (LDG) finite element method.
Typical solutions of this class of 2D problems exhibit boundary layers along the sides of the domain;
these layers generally cause difficulties for numerical methods.
Our LDG method handles the boundary layers by using a Shishkin mesh and also introducing the new concept
of a ``layer-upwind flux"---a discrete flux whose values are chosen on the fine mesh 
(which lies inside the boundary layers) in the direction where the layer weakens. 
On the coarse mesh, one can use a standard central flux. 
No penalty terms are needed with these fluxes, 
unlike many other variants of the LDG method.
Our choice of discrete flux makes it feasible to derive
an optimal-order error analysis in a balanced norm; 
this norm is stronger than the usual energy norm 
and is a more appropriate measure for errors in
computed solutions for singularly perturbed reaction-diffusion problems.
It will be proved that the LDG method is usually convergent of order $O((N^{-1}\ln N)^{k+1})$ 
in the balanced norm, where $N$ is the number of mesh intervals in each coordinate direction
and tensor-product piecewise polynomials of degree~$k$ in each coordinate variable are used in the LDG method.
This result is the first of its kind for the LDG method applied to this class of problem
and is optimal for convergence on a Shishkin mesh.
Its sharpness is confirmed by numerical experiments. 
\end{abstract}

\section{Introduction}\label{sec:intro}
Consider the singularly perturbed reaction-diffusion problem
\begin{subequations}\label{spp:R-D:2d}
	\begin{align}
	-\sq\Delta u(x,y)+ b(x,y)u(x,y)&=f(x,y)\  \text{ in } \Omega:=(0,1)\times(0,1),
	\label{R-D:model:1}
	\\
	u&=0 \hspace{1cm} \text{ on } \partial \Omega,
	\label{R-D:model:2}
	\end{align}
\end{subequations} 
where $\varepsilon>0$ is a small parameter, $b\geq 2\beta^2>0$
and $\beta$ is a positive constant.
We assume that $b$ and $f$ are sufficiently smooth; more details are given in Section~\ref{sec:scheme}.

Singularly perturbed problems like \eqref{spp:R-D:2d} appear in various applications, e.g., 
when solving nonlinear reaction-diffusion problems by Newton's method 
and in implicit time-discretizations  of parabolic reaction-diffusion problems 
with small time steps \cite{HeuerKarkulik2017}. 
The true solution of~\eqref{spp:R-D:2d} will usually exhibit sharp boundary layers 
at the boundary~$\partial\Omega$ of the domain~$\Omega$, 
which in general reduces the accuracy of numerical solutions of~\eqref{spp:R-D:2d}. 
Thus, many special numerical methods for~\eqref{spp:R-D:2d}
have been constructed and analysed; 
see \cite{Apel1999,Clavero2005,LiNavon1998,Lin2008SIAM,Linss10,Liu2009,Roos2008,Roos2003,Schatz1983,StySty18} 
and their references.

\subsection{Balanced norms}\label{sec:balnorm}
Unlike singularly perturbed convection-diffusion problems, the natural energy norm 
associated with \eqref{spp:R-D:2d} is too weak to capture the contribution of 
the layer component of the true solution, as was pointed out in~\cite{LinStynes2012},
which advocated the replacement of the energy norm by a stronger so-called \emph{balanced norm}.
In this new norm, the $H^1(\Omega)$ component of the energy norm is rescaled in such a way 
that the layer and smooth components of the true solution~$u$
of~\eqref{spp:R-D:2d} have the same order of magnitude with respect to $\varepsilon$.

Balanced-norm error analyses have attracted much attention since the appearance of~\cite{LinStynes2012}. 
These analyses can be divided into two classes:  
\begin{enumerate}
\item[(i)] Use a standard numerical method but carry out the error analysis in 
a balanced norm, e.g., $C^0$-interior penalty method~\cite{RoosSchopf2015}, 
$hp$-finite element method (FEM) on spectral boundary layer meshes~\cite{Melenk2016}, 
streamline-diffusion FEM~\cite{Franz2014} and a finite volume method~\cite{MengStynes2023}. 
Note however that it is usually troublesome 
to coax a balanced-norm error bound from a method that fits more naturally with a
standard energy norm.

\item[(ii)] Construct a new numerical method that leads naturally to a balanced norm, 
e.g., mixed FEM~\cite{LinStynes2012}, dual FEM~\cite{Cai2020}, 
discontinuous Petrov-Galerkin method~\cite{HeuerKarkulik2017}, 
and an exponentially weighted FEM~\cite{Armentano2023,MaddenStynes2021}.

\end{enumerate}

\subsection{The LDG method}
The local discontinuous Galerkin (LDG) method \cite{Cockburn:Shu:LDG}
is a popular FEM that uses discontinuous piecewise polynomial spaces for the trial and test solutions. 
Its basic idea  is to rewrite the second-order differential equation as an equivalent first-order system 
to which one then applies the original DG method~\cite{Reed1973}. 
In the LDG  method one must specify a numerical flux on the boundary of each mesh element;
if this is done appropriately, then the auxiliary variables that approximate the derivatives of the solution
can be eliminated locally, which explains the name LDG. 
The LDG method is superior to many other numerical methods
because of its strong stability, high-order accuracy, flexibility for $hp$-adaptivity, and high parallelizability. 
It is well suited to problems whose solutions have boundary layers.
Furthermore, it can produce high-order accuracy for both the solution itself and its gradient; 
the latter quantity is important in fluid dynamics applications
where values of pressure gradients are desired.

The LDG method has been used to solve various 
singularly perturbed problems with boundary layers 
\cite{ChengJiangStynes2023,Cheng2021:Calcolo,ChengStynes2023,Lin2008SIAM,WCX2017,WangCheng2024,Xie2009JCM,Zhu2013,Zhu:2dMC} and 
several energy-norm convergence results have been derived; 
in these papers the numerical flux is usually either a penalty flux
at all element interfaces or an alternating flux at the interior element
interfaces combined with a penalty flux at the domain boundary~$\partial\Omega$.
Nevertheless, only a few   
balanced-norm error bounds are known for LDG solutions of 
singularly perturbed reaction-diffusion problems. 
We now describe these results.

In~\cite{Cheng2021JCAM} a local $L^2$~projection and an inverse inequality were used 
to derive a balanced-norm error estimate for a two-parameter singularly perturbed 
two-point boundary value problem
which is of reaction-diffusion type when the convection coefficient parameter is zero;
a suboptimal convergence rate of order $O((N^{-1}\ln N)^k)$ was obtained 
for the LDG method with a global penalty flux on element interfaces,
where $N$ is the number of mesh elements and 
$k$ is the degree of the piecewise polynomials in the FEM space. 
Subsequently, in \cite{CYM2022} an LDG method with interior alternating fluxes 
and penalty fluxes at the boundary~$\partial\Omega$ was considered.
Different projectors for the smooth and layer components of the solution 
were employed, improving the convergence rate on the elements adjacent to the transition points;
consequently the final convergence order was improved by one-half. 
Under the restrictive assumption that the regular component of the solution 
belongs to the FEM space, 
the convergence rate was proved to be the optimal rate $O((N^{-1}\ln N)^{k+1})$. 
But despite this progress, a gap still remained between the theoretical and numerical results.

\subsection{Choice of numerical flux in the LDG method}\label{sec:fluxchoice}
In~\cite{ChengWangStynes2024JSC} we recently applied the LDG method 
on a Shishkin mesh to solve a one-dimensional reaction-diffusion problem, 
using a central flux at interior element interfaces and a penalty flux at the boundary. 
By using the nodal superconvergence of the average of the local $L^2$ projections at the element interfaces
when adjacent elements are the same size,
an optimal-order error bound was derived in a balanced norm: 
we proved $O((N^{-1}\ln N)^{k+1})$ for even~$k$ and $O((N^{-1}\ln N)^{k})$ for odd~$k$. 
These convergence rates are sharp and agree with the numerical results 
for this method.

Despite this positive convergence result for a 1D problem, the LDG method with central numerical flux
at every interior element interface has some drawbacks.
First, it has a lower rate of convergence when $k$ is odd.
Second, it seems to be very difficult to extend the 1D error analysis
of~\cite{ChengWangStynes2024JSC}  to the 2D case,
because the obvious 2D choice of  projector is 
the tensor product of the 1D projectors from~\cite{ChengWangStynes2024JSC},
but in 2D these interact with each other in the vicinity of the mesh transition interfaces
in a complicated way that  seriously obstructs the error analysis.  
Third, the special nodal superconvergence property of the $L^2$ projection on locally uniform meshes,
which was used in~\cite{ChengWangStynes2024JSC} to analyse the method on a Shishkin mesh,
cannot be extended to other layer-adapted meshes such as Bakhvalov-type meshes \cite{Linss10,Roos2008} that
are not locally uniform inside the boundary layers.

To address these issues,  in the present paper we introduce and analyse a new design principle 
for the numerical flux in the LDG method when solving singularly perturbed problems with layers. 
We call it a \emph{layer-upwind} flux since the choice of numerical flux on each element 
of the fine Shishkin mesh depends on where the layer is weakest in that element. 
On the coarse part of the Shishkin mesh we use the old idea of a central flux.  
With these choices of numerical fluxes, the LDG method needs 
no penalty terms at the domain boundary, 
unlike~\cite{Castillo2002,ChengJiangStynes2023,ChengStynes2023,CYM2022,WangCheng2024,Zhu:2dMC}.

The new layer-upwind flux is natural and is symmetric across~$\Omega$ 
when the layers in the true solution of~\eqref{spp:R-D:2d} are symmetric,
unlike the fluxes used in~\cite{ChengWangStynes2024JSC}.
Furthermore, with this choice of flux we can derive 
a balanced-norm error estimate 
that achieves a nearly optimal convergence rate
for both even and odd values of $k$.
To be precise, we show $O((N^{-1}\ln N)^{k+1})$ convergence in the balanced norm 
for even values of~$k$ under the reasonable assumption that $\varepsilon^{1/2}\le N^{-1}$; see Remark~\ref{rmk:thm}.
For odd values of~$k$, we get the same result when $\varepsilon^{1/4}\le N^{-1}(\ln N)^{k+1}$,
and if this condition is violated (but one still has $\varepsilon^{1/2}\le N^{-1}$), 
then the balanced norm convergence rate becomes $O((N^{-1}\ln N)^{k+1}+N^{-(k+1/2)})$,
but in practice this is unlikely to happen and so
one usually sees  $O((N^{-1}\ln N)^{k+1})$ convergence; again see Remark~\ref{rmk:thm}.
To our knowledge, this is the best balanced-norm error estimate result that has been proved
for any LDG method applied to a 2D reaction-diffusion singularly perturbed problem.

\subsection{Complexity of the error analysis}
The bilinear form $B(\cdot;\cdot)$ of our LDG method satisfies 
$B(\vph;\vph) = \enorm{\vph}^2$ for all functions~$\vph$ 
in the finite element trial space~$\spc^3$, 
where $\enorm{\cdot}$ is an associated energy norm; see Section~\ref{sec:LDG:method}.
Thus, the method falls into category~(i) of Section~\ref{sec:balnorm}---its error analysis 
fits naturally in an energy norm framework but is challenging when 
the stronger balanced norm $\ba{\cdot}$ is used 
(see \eqref{balanced:norm:2d}),
as the only direct relationship between  $B(\cdot;\cdot)$
and  $\ba{\cdot}$
introduces a factor~$\sq^{-1/4}$ into the error estimates, which is unacceptable
since we want error bounds that prove the accuracy of the LDG solution 
for all values of $\sq\in (0,1]$.

To deal with this fundamental difficulty, 
our strategy is to prove a \emph{supercloseness} result
for the LDG solution $\wph\in \spc^3$: 
setting $\bm w := (u, \sq u_x, \sq u_y)$, 
we construct a projection $\prow \bm w\in \spc^3$ of~$\bm w$ 
such that (see \eqref{xisuperclose}) for some constant~$C$ one has
\begin{equation}\label{super}
\enorm{\prow \bm w-\wph} \le C\sq^{1/4}\zeta(N)
\ \text{ with }\zeta(N)\to 0 \text{ as }N\to\infty,
\end{equation}
where $N$ is the number of mesh intervals in each coordinate direction.
This implies immediately that $\ba{\prow \bm w-\wph} \le C\zeta(N)$
and the analysis is then reduced to establishing approximation theory estimates 
for $\ba{\bm w - \prow \bm w}$.

One should note that the estimate \eqref{super} is extraordinarily delicate
because of the factor $\sq^{1/4}$ (recall that $\sq\in (0,1]$ is arbitrary)
and the sharpness of the bound $\zeta(N)$, which agrees exactly with our numerical results. 
Thus, the construction of the complicated projector~$\prow$ requires great care.
Furthermore, to bound $\ba{\bm w - \prow \bm w}$ is far from routine---one must prove 
a variety of precise estimates for~$\prow$.

The projector $\prow$ is a combination of 
several composite Gauss-Radau and $L^2$ projectors 
that are tailored to the form of the numerical fluxes and have good approximation properties. 
They are local projectors and unlike~\cite{ChengWangStynes2024JSC} they do not 
need to be coupled near the transition interfaces in the mesh. 
The analyses of our earlier LDG papers 
\cite{ChengJiangStynes2023,Cheng2021:Calcolo,ChengStynes2023,CYM2022}
also used Gauss-Radau and $L^2$ projectors, but a comparison with our work 
here---note the elaborate constructions in equation~\eqref{flux:diffusion:2d} 
and in Sections~\ref{sec:projectorconstruction} and~\ref{sec:superapproximation}---shows that 
the present paper devises a remarkable new combination of them 
and presents some innovative approximation and superapproximation properties
that are able to yield a 2D optimal-order balanced-norm error analysis
in practical situations.
Where possible we reuse old results, but almost all our analysis here is new.

It is notable that our new error analysis on the fine Shishkin mesh
makes no use of the nodal superconvergence property of the $L^2$ projection on locally uniform meshes 
that was mentioned in Section~\ref{sec:fluxchoice}, thereby opening
the possibility of extending this work to 
other layer-adapted meshes such as Shishkin-type and Bakhvalov-type meshes.
In general,  our new layer-upwinded numerical flux appears to be a 
very promising tool in the application of  the LDG method to other problems whose solutions have boundary layers.

\subsection{Structure of the paper}
The paper is organised as follows. 
In Section~\ref{sec:scheme}, we define the Shishkin mesh 
and present the new LDG method with its layer-upwind flux. 
Section~\ref{sec:projectors} introduces several local projectors 
and derives their basic approximation properties.
Then in Section~\ref{sec:superapproximation}, various superapproximation properties
are established for these operators.
These results are then used in the balanced-norm error analysis of Section~\ref{section:error:analysis}. 
In Section~\ref{section:numerical:experiments} we present some numerical experiments to confirm 
the sharpness of our theoretical results. 
Finally, Section~\ref{sec:concluding:remarks} gives some concluding remarks.\\[1mm]

\emph{Notation.} We use $C$ to denote a generic positive constant that may depend on the data $b,f$ of~\eqref{spp:R-D:2d}, 
the parameter $\sigma$ of~\eqref{tau:2}, and the degree $k$ of the polynomials in our finite element space,  
but is independent of $\varepsilon$ and of $N$ (the number of mesh intervals in each coordinate direction); 
$C$ can take different values in different places.\\
\indent The usual Sobolev spaces $W^{m,p}(D)$, $H^{m}(D)$ and $L^{p}(D)$ will be used,
where $D$ is any measurable subset of~$\Omega$.
The $L^{2}(D)$ norm is denoted by $\norm{\cdot}_{D}$
and the $L^{\infty}(D)$ norm by $\norm{\cdot}_{L^\infty(D)}$,
and $\dual{\cdot}{\cdot}_D$ denotes the $L^2(D)$ inner product.
The subscript $D$ will be omitted when $D= \Omega$.

\section{The Shishkin mesh and the LDG method}\label{sec:scheme}
In this section we shall formulate the entire numerical method: the Shishkin mesh and 
the LDG discretisation of~\eqref{spp:R-D:2d}. But first we describe the essential features of
the solution~$u$ of~\eqref{spp:R-D:2d}.

\subsection{Problem and solution properties}
Typical solutions $u$ of \eqref{spp:R-D:2d} have an exponential boundary layer 
along each side of $\partial\Omega$.
When the data of~\eqref{spp:R-D:2d} is sufficiently smooth 
and compatible at the corners of~$\Omega$,
the next lemma presents a decomposition of~$u$ 
that includes pointwise bounds on the derivatives of each of its components.
The derivation of such results is lengthy and
detailed proofs are given in \cite{Andreev2006,Clavero2005,HanKellogg1990}; see also
\cite[p.257, Remark~III.1.27]{Roos2008} and \cite[Remark~4.18]{StySty18}.

For convenience we write $\partial_x^i\partial_y^j u(x,y)$ instead of the more usual
$\frac{\partial^{i+j}u(x,y)}{\partial x^i\partial y^j}$.

\begin{lemma}\label{lemma:prop:2d} 
	Let $m$ be a positive integer.
	Under suitable smoothness and compatibility conditions on the data, 
	the solution $u$ of \eqref{spp:R-D:2d}
	lies in the H\"older space $C^{m+2,\alpha}(\bar{\Omega})$ 
	for some $\alpha\in (0,1)$ and can be decomposed as 
	\[
	u=\bar{u}+\uep=\ureg+\sum_{i=1}^{4}u_{i}^{\rm{b}}+\sum_{i=1}^{4}u_{i}^{\rm{c}},
	\]
	where $\ureg$ is the regular/smooth component, 
	each $u_{i}^{\rm{b}}$ is a layer associated with the edge $\Gamma_i$ 
	and each $u_{i}^{\rm{c}}$ is a layer associated with the corner $c_i$ 
	(see Figure~\ref{domaindiv} for the edge and corner notation).
	The derivatives of these components satisfy the following bounds:
	\begin{subequations}\label{ubound}
		\begin{align}\label{reg:smooth}
		|\partdfij \ureg (x,y)| 
		&\leq C &&\text{for } 0\leq i+j\leq m+2,
		\\
		\label{reg:boundary}
		|\partdfij u_{1}^{\rm{b}}(x,y)|   
		&\leq C\sq^{-i/2}e^{-\beta x/\sqrt{\sq}} &&\text{for } 0\leq i+j\leq m+1,
		\\
		\label{reg:corner}
		|\partdfij u_{1}^{\rm{c}}(x,y)|   
		&\leq C\sq^{-(i+j)/2}e^{-\beta (x+y)/\sqrt{\sq}} &&\text{for } 0\leq i+j\leq m+1
		\end{align}
	\end{subequations}
	for all $(x,y)\in \bar{\Omega}$. 
	For $u_{2}^{\rm{b}}$, $u_{3}^{\rm{b}}$ and $u_{4}^{\rm{b}}$, bounds 
	analogous to \eqref{reg:boundary} can be derived; 
	for $u_{2}^{\rm{c}}$, $u_{3}^{\rm{c}}$ and $u_{4}^{\rm{c}}$, bounds 
	analogous to \eqref{reg:corner} can be derived.
\end{lemma}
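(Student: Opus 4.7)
The plan is to construct the decomposition via matched asymptotic expansions in powers of $\sq$, then upgrade the formal leading-order bounds to rigorous pointwise estimates on derivatives up to order $m+2$ by applying the maximum principle with explicit exponential barriers together with Schauder-type estimates up to the boundary.

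First I would build the regular part $\ureg$ by the ansatz $\ureg = \sum_{k=0}^{M}\sq^{k}\bar u_k$, with $b\bar u_0 = f$ and the recursion $b\bar u_k = \Delta\bar u_{k-1}$, truncating at an order $M = M(m)$ large enough that the residual $-\sq\Delta\ureg + b\ureg - f$ is $O(\sq^{M+1})$. Since $b$ and $f$ are smooth, each $\bar u_k$ has $\sq$-independent derivative bounds, yielding \eqref{reg:smooth}. Next, for each side $\Gamma_i$ of $\partial\Omega$ I would produce an edge-layer corrector $u_i^{\rm b}$ that annihilates the trace of $\ureg$ on $\Gamma_i$; for $\Gamma_1 = \{x = 0\}$ the stretched variable $\tilde x = x/\ssq$ converts the leading corrector into $-\partial_{\tilde x}^{2} v + b(0,y)\,v = 0$ with $v(0,y) = -\ureg(0,y)$ and $v(\infty,y) = 0$, so $b \ge 2\beta^{2}$ forces decay $\sim e^{-\beta x/\ssq}$. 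Each $\partial_x$ then brings out a factor $\sq^{-1/2}$ while $\partial_y$ remains bounded, producing \eqref{reg:boundary}.

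The corner-layer correctors $u_i^{\rm c}$ are then constructed to cancel the residuals of $\ureg + \sum_j u_j^{\rm b}$ at each corner. Near $c_1 = (0,0)$, doubly stretching with $\tilde x = x/\ssq$ and $\tilde y = y/\ssq$ yields an elliptic problem on a quarter plane with frozen coefficient $b(0,0)$, whose solution decays like $e^{-\beta(x+y)/\ssq}$; differentiation in either variable now contributes $\sq^{-1/2}$, which gives the prefactor $\sq^{-(i+j)/2}$ in \eqref{reg:corner}. To promote these formal bounds into rigorous estimates on the truncated components themselves, I would apply the maximum principle to the residual elliptic equations satisfied by each component and by its partial derivatives, using barriers of the form $C e^{-\beta x/\ssq}$ or $C e^{-\beta(x+y)/\ssq}$; standard Schauder estimates then extend these bounds up to the boundary for derivatives of order $\le m+2$.

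The main obstacle will be the compatibility analysis at the four corners: the Dirichlet datum $u = 0$ must vanish exactly after combining $\ureg$, the four edge layers and the four corner layers, and the asymptotic series must match to a high enough order to deliver the H\"older regularity $C^{m+2,\alpha}(\bar\Omega)$. Tracking how the edge-layer expansions collide pairwise at each corner, and choosing the corner-layer data to cancel the resulting mismatch, is the delicate bookkeeping step responsible for the ``suitable smoothness and compatibility conditions'' alluded to in the statement and for the considerable length of the proofs in \cite{Andreev2006,Clavero2005,HanKellogg1990}. The analogous bounds for $u_2^{\rm b}, u_3^{\rm b}, u_4^{\rm b}$ and $u_2^{\rm c}, u_3^{\rm c}, u_4^{\rm c}$ then follow from the construction by symmetry across the four sides and four corners of $\Omega$.
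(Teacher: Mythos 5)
The paper does not prove this lemma at all: it is quoted as a known solution-decomposition result, with the proof delegated to the cited references (Andreev, Clavero et al., Han--Kellogg, and the remarks in Roos--Stynes--Tobiska and Stynes--Stynes). Your outline --- asymptotic expansion in powers of $\sq$ for the regular part, stretched-variable edge and corner correctors with exponential decay forced by $b\ge 2\beta^2$, barrier functions via the maximum principle to make the decay rigorous, Schauder estimates for the higher derivatives, and a careful corner-compatibility analysis --- is essentially the strategy used in those references, and you correctly identify the corner compatibility bookkeeping as the genuinely hard part that makes the full proofs lengthy. So your proposal is a faithful sketch of the standard argument rather than a new route; turning it into a complete proof would require precisely the detailed compatibility and remainder estimates that the paper chooses not to reproduce (note, e.g., that the paper records the concrete sufficient condition $f,b\in C^{4,\alpha}(\bar\Omega)$ with $f$ vanishing at the four corners for the case $m=2$, which is the kind of explicit hypothesis your ``suitable conditions'' must eventually be pinned down to).
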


If for instance one assumes that $f,b\in C^{4,\alpha}(\bar{\Omega})$
and the corner compatibility conditions $f(0,0)=f(1,0)=f(0,1)=f(1,1)=0$ are satisfied,
then Lemma~\ref{lemma:prop:2d} holds true for $m=2$; see 	\cite[Section~1.2]{Liu2009}.

\subsection{Shishkin mesh}
\label{sec:meshes}

We shall construct our LDG method on a standard piecewise-uniform Shishkin mesh 
that is refined near $\partial\Omega$. It is a tensor product of one-dimensional Shishkin meshes
as described in, e.g., \cite{Linss10,Roos2008,StySty18}.

Let $N\geq 4$ be a positive integer that is divisible by $4$.
Our mesh uses $N+1$ points in each coordinate direction.
Define the \emph{mesh transition parameter} $\tau$ by
\begin{equation}\label{tau}
\tau=\min\left\{\frac{\sigma\ssq}{\beta}\ln N,\frac14\right\},
\end{equation}
where $\sigma>0$ is a user-chosen parameter whose value will be discussed later.

The finite element space will be defined in Section~\ref{sec:LDG:method}; 
it uses piecewise polynomials of degree $k\ge 0$.
Define
\begin{equation}\label{khat}
\hat{k}=\begin{cases}
k+1  & \text{for $k$ even},\\
k  & \text{for $k$ odd}.
\end{cases}
\end{equation}

In~\eqref{tau} we choose $\sigma\geq \hat{k}+1\geq k+1$ to facilitate our error analysis.
We make the mild assumption that
\begin{align}\label{mild:condition:epsilon}
\sq\leq \left(\frac{\beta}{4\sigma\ln N}\right)^2,
\ \text{ i.e., }\
\frac{\sigma\ssq}{\beta}\ln N \leq \frac14,
\end{align}
as otherwise the mesh is sufficiently fine to resolve the boundary layers in the solution 
and the error in the numerical solution can be analysed in the classical way 
without any special treatment of the singularly 
perturbed nature of the problem. Consequently \eqref{tau} becomes
\begin{align}\label{tau:2}
\tau= \frac{\sigma\ssq}{\beta}\ln N \ \text{ with } \sigma\geq \hat{k}+1.
\end{align}

Define the mesh points $(x_i,y_j)$ for $i,j=0,1,\dots,N$ by
\begin{equation}\label{mesh:point}
x_i=y_i=
\begin{dcases}
4\tau iN^{-1}
& \textrm{for}\;  i=0,1,\dots,N/4,
\\
\tau+2(1-2\tau)(iN^{-1}-1/4)
&\textrm{for}\; i=N/4+1,\dots,3N/4,
\\
1-4\tau (1-iN^{-1})
& \textrm{for}\; i=3N/4+1,\dots,N.
\end{dcases}
\end{equation}
By drawing axiparallel lines through these mesh points 
we construct the layer-adapted mesh: set 
$\Omega_N:=\{K_{ij}\}_{i,j=1,2,\dots,N}$, where 
$K_{ij}:=I_i \times J_j:=(x_{i-1},x_{i})\times(y_{j-1},y_{j})$.

The right-hand picture of Figure \ref{domaindiv} displays a Shishkin mesh with $N=8$. 
It is uniform and coarse on the region $\dreg:=(\tau,1-\tau)\times(\tau,1-\tau)$, 
but is refined in the layer regions 
\begin{align*}
&\dws:=(0,\tau)\times(0,\tau),
&&\des:=(1-\tau,1)\times(0,\tau),
\\
&\dw:=(0,\tau)\times(\tau,1-\tau),
&&\de:=(1-\tau,1)\times(\tau,1-\tau),
\\
&\dwn:=(0,\tau)\times(1-\tau,1),
&&\den:=(1-\tau,1)\times(1-\tau,1),
\\
&\ds:=(\tau,1-\tau)\times(0,\tau),
&&\dn:=(\tau,1-\tau)\times(1-\tau,1).
\end{align*}
It is sometimes convenient to consider groups of these regions; define
\[
\Omega_i^x:=\bigcup_{j=1}^3\Omega_{ij}
\quad\text{ and } \quad  
\Omega_j^y:=\bigcup_{i=1}^3\Omega_{ij}
\ \text{ for }  i,j=1,2,3.
\]

Set $h_i=x_i-x_{i-1}=y_i-y_{i-1}$ for $i=1,2,\dots,N$.
Then 
\begin{equation*}
h_i=
\begin{dcases}
H: =2(1-2\tau)N^{-1}
= O(N^{-1})
& \text{ for } i=N/4+1,\dots,3N/4,
\\
h:=4\tau N^{-1} = O(\sq^{1/2} N^{-1}\ln N) 
& \text{ for } i=1,\dots,N/4 
\text{ and } i=3N/4+1,\dots,N.
\end{dcases}
\end{equation*}

\begin{figure}[!h]
\begin{minipage}{0.49\linewidth}
\centerline{\includegraphics[width=2.5in,height=2.5in]{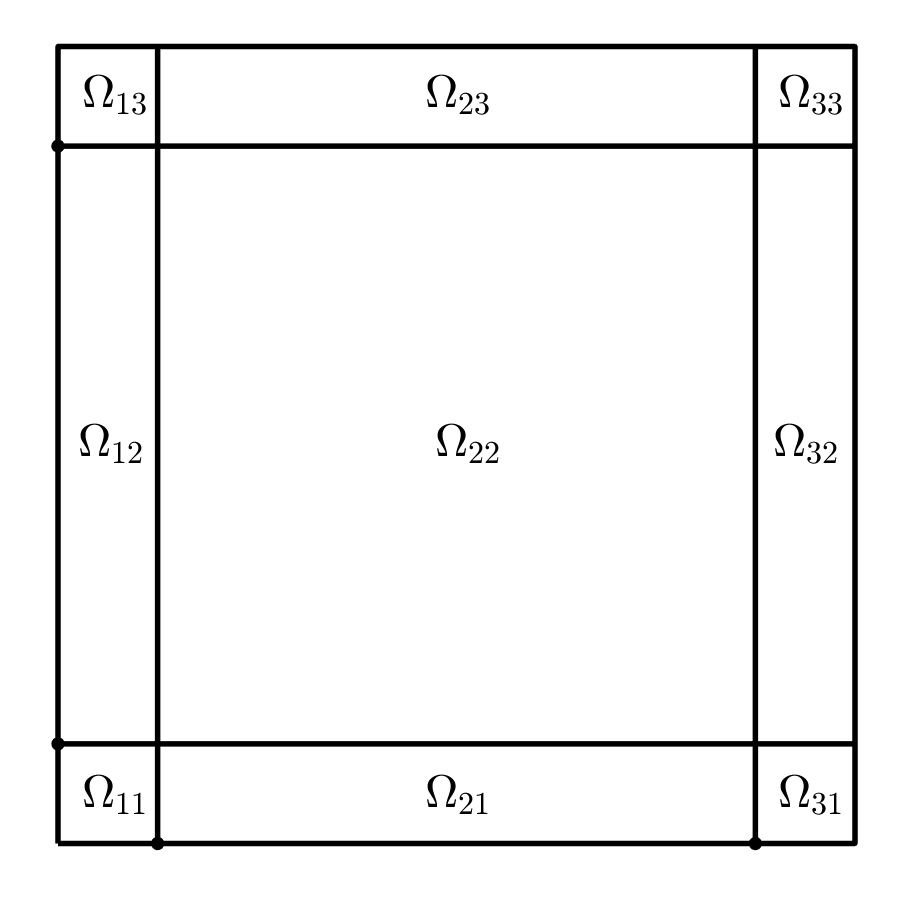}}
\end{minipage}
\begin{minipage}{0.49\linewidth}
\centerline{\includegraphics[width=2.5in,height=2.5in]{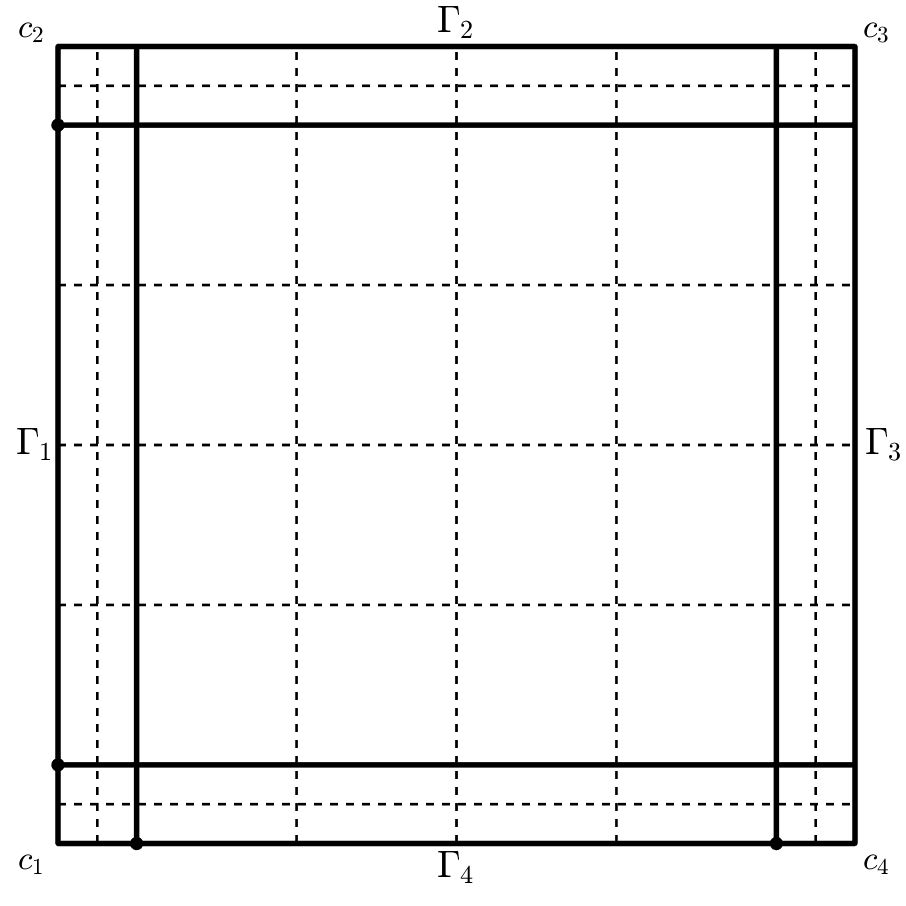}}
\end{minipage}
\caption{Division of $\Omega$ (left) and Shishkin mesh with $N=8$ (right).}
\label{domaindiv}
\end{figure}

\subsection{The LDG method with layer-upwind flux}
\label{sec:LDG:method}

Let $k\ge 0$ be a fixed integer, chosen by the user.
On each bounded interval $I= (a,b)\subset\mathbb{R}$, 
let $\mathcal{P}^{k} (I)$ be the space of polynomials of degree at most $k$ defined on~$I$.
For each $K=I_i\times J_j$, set $\mathcal{Q}^{k}(K)=\mathcal{P}^{k}(I_i)\otimes \mathcal{P}^{k}(J_j)$.
Then define the piecewise continuous finite element space 
\begin{equation*}
\spc= \{v\in L^2 (\Omega)\colon v|_{K} \in \mathcal{Q}^{k}(K)\, \forall K\in\Omega_N \}.
\end{equation*}
Functions in $\spc$ may be discontinuous across element interfaces.
For each $v\in \spc$ and $y\in J_j$, for $j=1,2,\dots,N$
we use $v^\pm_{i,y}(y)=\lim_{x\to x_{i}^\pm}v(x,y)$
and $v^\pm_{x,j}(x)=\lim_{y\to y_{j}^\pm}v(x,y)$
to express the traces on element edges.
Furthermore, for $i=1,2,\dots,N-1$ define the jump $\jump{v}_{i,y}(y)$ 
and the average $\average{v}_{i,y}(y)$ by
\begin{align*}
\jump{v}_{i,y}(y)&:=v_{i,y}^+(y) - v_{i,y}^-(y) 
 &\text{and }& \
\average{v}_{i,y}(y):=\frac12\big(v_{i,y}^+(y) + v_{i,y}^-(y)\big);\\
\text{with}\ 
\jump{v}_{0,y}(y)&:=v_{0,y}^+(y) =: \average{v}_{0,y}(y)
&\text{and }& \
\average{v}_{N,y}(y):=v_{N,y}^-(y) =:-\jump{v}_{N,y}(y).
\end{align*}
In a similar fashion, for each $x\in I_i$ and $i=1,2,\dots,N$,
define the traces $v_{x,j}^{\pm}(x)$,
the jumps $\jump{v}_{x,j}(x)$ and the averages $\average{v}_{x,j}(x)$
on the horizontal edges for $j=0,1,\dots,N$.
To simplify the notation, we omit the dependent variables of these functions and simply write them as $\jump{v}_{i,y}$, 
$\average{v}_{i,y}$, $\jump{v}_{x,j}$ and $\average{v}_{x,j}$ in what follows.

To define the LDG method, rewrite \eqref{R-D:model:1} as the equivalent first-order system
\begin{align}\label{para:rewrite:2d}
-p_x-q_y+bu=f,          \quad
\varepsilon^{-1}p=u_x   \;\text{ and }\;
\varepsilon^{-1}q=u_y    
 \quad \text{  in   }\Omega,
\end{align}
which are subject to the homogeneous Dirichlet boundary condition~\eqref{R-D:model:2}.
By applying the DG method of~\cite{Reed1973}
to~\eqref{para:rewrite:2d} we get the LDG method, 
which we now state precisely. 
Find $ \wph=(\uph,\pph,\qph)\in \spc^3 :=\spc\times \spc\times \spc$ such that in each element $K_{ij}\in\Omega_N$, 
the following variational equations hold true
for each $\vph=(\vphu,\vphp,\vphq)\in \mathcal{V}_N^3$:
\begin{subequations}\label{LDG:scheme:2d}
	\begin{alignat}{1}
	&\dual{b\uph}{\vphu}_{K_{ij}}
	+\dual{\pph}{\vphu_x}_{K_{ij}}
	-\dual{\widehat{\pph}_{i,y}}{\vphu^{-}_{i,y}}_{J_j}
	+\dual{\widehat{\pph}_{i-1,y}}{\vphu^{+}_{i-1,y}}_{J_j}
	\nonumber\\
	\label{vartional:equation:u}
	&+\dual{\qph}{\vphu_y}_{K_{ij}}
	-\dual{\widehat{\qph}_{x,j}}{\vphu^{-}_{x,j}}_{I_i}
	+\dual{\widehat{\qph}_{x,j-1}}{\vphu^{+}_{x,j-1}}_{I_i}
	=\dual{f}{\vphu}_{K_{ij}},
	\\
	\label{vartional:equation:p}
	&\varepsilon^{-1}\dual{\pph}{\vphp}_{K_{ij}}+
	\dual{ \uph}{\vphp_x}_{K_{ij}}
	-\dual{\widehat{\uph}_{i,y}}{\vphp^{-}_{i,y}}_{J_j}
	+\dual{\widehat{\uph}_{i-1,y}}{\vphp^{+}_{i-1,y}}_{J_j}
	=0,
	\\
	\label{vartional:equation:q}
	&\varepsilon^{-1}\dual{\qph}{\vphq}_{K_{ij}}+
	\dual{ \uph}{\vphq_y}_{K_{ij}}
	-\dual{\widehat{\uph}_{x,j}}{\vphq^{-}_{x,j}}_{I_i}
	+\dual{\widehat{\uph}_{x,j-1}}{\vphq^{+}_{x,j-1}}_{I_i}
	=0,	
	\end{alignat}
\end{subequations}
where for each $y\in J_j~(j=1,2,\dots,N)$ and $x\in I_i~(i=1,2,\dots,N)$ 
the numerical fluxes in~\eqref{LDG:scheme:2d} are defined by
\begin{subequations}\label{flux:diffusion:2d}
	\begin{align}
	\label{flux:diffusion:uy:2d}
	\widehat{\uph}_{i,y}
    &
     =\begin{cases}
     0,                  &i=0,\\
    \uph^{-}_{i,y},      &i=1,\dots,N/4,\\
    \average{\uph}_{i,y}, &i=N/4+1,\dots,3N/4-1,\\
    \uph^+_{i,y},         &i=3N/4,\dots,N-1,\\
     0,                  &i=N,
    \end{cases}
    \\
    \label{flux:diffusion:p:2d}
    \widehat{\pph}_{i,y}
    &=\begin{cases}
    \pph^{+}_{i,y},                   &i=0,1,\dots,N/4,\\
    \average{\pph}_{i,y},   &i=N/4+1,\dots,3N/4-1,\\
    \pph^-_{i,y}, &i=3N/4,\dots,N-1,N,\\
    \end{cases} 
    \\
    \label{flux:diffusion:ux:2d}
    \widehat{\uph}_{x,j}
    &=\begin{cases}
    0,                  &j=0,\\
    \uph^{-}_{x,j},      &j=1,\dots,N/4,\\
    \average{\uph}_{x,j}, &j=N/4+1,\dots,3N/4-1,\\
    \uph^+_{x,j},         & j=3N/4,\dots,N-1,\\
    0,                  &j=N,
    \end{cases}
    \\
     \label{flux:diffusion:q:2d}
    \widehat{\qph}_{x,j}
    &=\begin{cases}
    \qph^{+}_{x,j},                   &j=0,1,\dots,N/4,\\
    \average{\qph}_{x,j},             &j=N/4+1,\dots,3N/4-1,\\
    \qph^-_{x,j},                    &j=3N/4,\dots,N-1,N.\\
    \end{cases}
	\end{align}
\end{subequations}

\begin{remark}
In these definitions, note the unusual choice of $\widehat{\pph}_{i,y}$ 
(and similarly $\widehat{\qph}_{x,j}$) for the cases 
$i=1,\dots, N/4$ and $i=3N/4, \dots, N-1$: here we specify the value of $\widehat{\pph}_{i,y}$
using values of $\widehat{\pph}$ on adjacent elements 
that lie further from the boundary~$\partial\Omega$, i.e., that lie in the direction where the layer weakens.
This choice is reminiscent of the technique of ``upwinding" approximations of 1st-order derivatives when one
computes solutions of convection-dominated problems, and $\widehat{\pph}$ is an approximation
of the 1st-order derivative~$\varepsilon u_x$, so we call the numerical flux in~\eqref{flux:diffusion:2d} 
a \emph{layer-upwind} flux. On the coarse mesh ($i=N/4+1, \dots, 3N/4-1$) we use a standard central flux
with the aim of deriving a sharper convergence result on
the coarse mesh domain~$\dreg$; cf.~\cite{ChengWangStynes2024JSC}.
\end{remark}

Our layer-upwind flux works well from both the practical and theoretical points of view:
it often yields better accuracy than other choices such as a globally central flux or an alternating flux,
and it enables us to prove an optimal-order error bound in a balanced norm.

Note that our LDG method has a certain degree of symmetry; furthermore, our method
does not use any penalty terms when defining the flux 
for singularly perturbed problems, unlike many other LDG papers \cite{ChengJiangStynes2023,ChengStynes2023,ChengWangStynes2024JSC,CYM2022,Zhu:2dMC}.

To write the LDG method in a compact form, sum \eqref{LDG:scheme:2d} over all $K_{ij}\in \Omega_N$: Find $\wph=(\uph,\pph,\qph)\in \spc^3$  such that 
$B(\wph;\vph)=\dual{f}{\vphu}$ for all $\vph=(\vphu,\vphp,\vphq)\in \spc^3$,
where
\begin{equation}\label{B:def:2d}
\begin{aligned}
&B(\wph;\vph) :=\dual{b\uph}{\vphu}+\sq^{-1}\dual{\pph}{\vphp}+
\sq^{-1}\dual{\qph}{\vphq}
\\
&\quad
+\dual{\uph}{\vphp_x}
+\sum_{j=1}^{N}\sum_{i=1}^{N-1}
\dual{\widehat{\uph}_{i,y}}{\jump{\vphp}_{i,y}}_{J_j}
+\dual{\uph}{\vphq_y}+
\sum_{i=1}^{N}\sum_{j=1}^{N-1}\dual{\widehat{\uph}_{x,j}}{\jump{\vphq}_{x,j}}_{I_i}
\\
&\quad
+\dual{\pph}{\vphu_x}
+\sum_{j=1}^{N}\left[\sum_{i=1}^{N-1}
\dual{\widehat{\pph}_{i,y}}{\jump{\vphu}_{i,y}}_{J_j}
+\dual{\pph_{0,y}^+}{\vphu_{0,y}^+}_{J_j}
-\dual{\pph_{N,y}^-}{\vphu_{N,y}^-}_{J_j}
\right]
\\
&\quad
+\dual{\qph}{\vphu_y}
+\sum_{i=1}^{N}\left[\sum_{j=1}^{N-1}
\dual{\widehat{\qph}_{x,j}}{\jump{\vphu}_{x,j}}_{I_i}
+\dual{\qph_{x,0}^+}{\vphu_{x,0}^+}_{I_i}
-\dual{\qph_{x,N}^-}{\vphu_{x,N}^-}_{I_i}
\right]
\end{aligned}
\end{equation}
and the ``hat" terms are defined in~\eqref{flux:diffusion:2d}.

Using the notation of~\eqref{para:rewrite:2d}, we write the solution of~\eqref{spp:R-D:2d}
as $\bm w := (u,p,q)$, analogously to $\wph=(\uph,\pph,\qph)$.
One can then define $B(\bm w;\vph)$ for all  $\vph\in \spc^3$ analogously to~\eqref{B:def:2d}
and the consistency of the LDG scheme follows:
for the solution $\bm w\in C^{1}(\bar\Omega)\times C^{1}(\bar\Omega)\times C^{1}(\bar\Omega)$,
\[
B(\bm w;\vph)=\dual{f}{\vphu}\quad \text{   for  all }
\vph=(\vphu,\vphp,\vphq)\in \spc^3.
\]
Equivalently, we have the Galerkin orthogonality property
\begin{align}
\label{Galerkin:orthogonality}
B(\bm w-\wph;\vph)=0\quad  \text{   for  all }
\vph=(\vphu,\vphp,\vphq)\in \spc^3.
\end{align}

Define an energy norm $\enorm{\cdot}$ on $\spc^3$
by $\enorm{\vph}^2=B(\vph;\vph)$ for each $\vph=(\vphu,\vphp,\vphq)\in \spc^3$.
Using integration by parts and the simple identities 
\[
\jump{\vphu\vphq}
=\vphu^-\jump{\vphq}+\vphq^+\jump{\vphu}
=\vphu^+\jump{\vphq}+\vphq^-\jump{\vphu}
=\average{\vphu}\jump{\vphq}
+\average{\vphq}\jump{\vphu}
\]
at each interior element interface, one obtains 
\begin{equation}\label{energy:norm:2d}
\enorm{\vph}^2
=\sq^{-1}\left(\norm{\vphp}^2
+\norm{\vphq}^2\right)
+\norm{b^{1/2}\vphu}^2.
\end{equation}
One can apply this definition also to $\bm w = (u,p,q)$: 
\[
\enorm{\bm w}^2 =\sq^{-1}\left(\norm{p}^2 +\norm{q}^2\right) +\norm{b^{1/2}u}^2.
\]

The norm $\enorm{\cdot}$ seems natural for our LDG method, but 
(recall the decomposition $u = \bar u + u_\sq$ of Lemma~\ref{lemma:prop:2d}) 
the contribution to $\enorm{\bm w}$ of its layer component
${\bm w}_{\sq}:=(u_\sq,\sq \partial_x u_\sq,\sq \partial_y u_\sq)$
is essentially lost when $\sq$ is small
because a short calculation shows that
$\enorm{{\bm w}_{\sq}}=O(\sq^{1/4})$, which vanishes as $\varepsilon\rightarrow 0$.
To address this shortcoming of the energy norm, 
we change the scaling $\sq^{-1}$ of the auxiliary variable terms to $\sq^{-3/2}$
to define the \emph{balanced norm} $\ba{\cdot}$ by 
(cf.~\cite{CYM2022,HeuerKarkulik2017,LinStynes2012,RoosSchopf2015})
\begin{subequations}\label{balanced:norm:2d}
\begin{align}
\ba{\vph}^2&=\sq^{-3/2}\left(\norm{\vphp}^2
+\norm{\vphq}^2\right)+\norm{b^{1/2}\vphu}^2.
\\
\intertext{ and } 
\ba{\bm w}^2 &=\sq^{-3/2}\left(\norm{p}^2 +\norm{q}^2\right) +\norm{b^{1/2}u}^2. 
\end{align}
\end{subequations}
This norm treats both components of the true solution equally: a calculation shows that
$\ba{{\bm w}_{\sq}}=O(1)$ and also $\ba{\bar{\bm w}}=O(1)$ 
for the regular component
$\bar{\bm w}:=(\bar u,\sq \partial_x \bar u,\sq \partial_y \bar u)$
of~$\bm w$.

\section{Projectors: stability and approximation properties}
\label{sec:projectors}

We now begin our error analysis of the LDG method \eqref{LDG:scheme:2d} 
using the balanced norm \eqref{balanced:norm:2d}.
In Section~\ref{sec:projectors} we define various projectors that will be used in our error analysis,
then investigate their basic approximation properties.
(Superapproximation properties of the projectors will be described in Section~\ref{sec:superapproximation}.)
It takes some time to establish these results, but they are needed for the error analysis of 
Section~\ref{section:error:analysis}.

\subsection{Construction of projectors}\label{sec:projectorconstruction}
We begin by introducing some 1D projectors that are used later
to define various 2D projectors.
Let $i\in\{1,\dots,N\}$.   
Define the $L^2$-projector $\pi: L^2(I_i) \to \mathcal{P}^{k}(I_i)$  for each $z\in L^2(I_{i})$ by
$\dual{\pi z}{\vphu}_{I_i}= \dual{z}{\vphu}_{I_i}$ for all $\vphu\in \mathcal{P}^{k}(I_i)$.
Then define two Gauss-Radau projectors $\pi^{\pm}:C(\bar{I}_i) \to \mathcal{P}^{k}(I_i)$ by
\begin{align*}
\dual{\pi^{+} z}{\vphu}_{I_{i}} &= \dual {z}{\vphu}_{I_{i}} \ \forall \vphu\in \mathcal{P}^{k-1}(I_{i})
	\ \text{ and }\  (\pi^{+} z)^+_{i-1} = z^+_{i-1}; \\
\dual{\pi^{-} z}{\vphu}_{I_{i}} &= \dual {z}{\vphu}_{I_{i}} \ \forall \vphu\in \mathcal{P}^{k-1}(I_{i})
	\ \text{ and }\  (\pi^{-} z)^-_{i} = z^-_{i}.
\end{align*}

Let $K_{ij}=I_i\times J_j$ be a mesh rectangle.
The two-dimensional $L^2$-projector 
$\Pi: L^2(K_{ij}) \to \mathcal{Q}^{k}(K_{ij})$ is defined for each $z\in L^2(K_{ij})$ by
\begin{subequations}\label{2d:projectors}
\begin{align}\label{L2:prj:2d}
\dual{\Pi z}{\vphu}_{K_{ij}}=\dual{z}{\vphu}_{K_{ij}}
\quad \forall \vphu\in \mathcal{Q}^{k}(K_{ij}).
\end{align}
Define the following tensor-product projectors mapping from
$C(\bar K_{ij})$  to $\mathcal{Q}^{k}(K_{ij})$:
\begin{align}
\label{GR:prj:2d:1}
\Pi_x^{+}&:=\pi^+_x \otimes \pi_y, 
&&\Pi_x^{-}:=\pi^-_x \otimes \pi_y,
&&\Pi_y^{+}:=\pi_x \otimes \pi^+_y,
&&\Pi_y^{-}:=\pi_x \otimes \pi^-_y;
\\
\label{GR:prj:2d:2}
\Pi_{xy}^{+}&:=\pi^+_x \otimes \pi^+_y, 
&&\Pi_{xy}^{\pm}:=\pi^+_x \otimes \pi^-_y, 
&&\Pi_{xy}^{\mp}:=\pi^-_x \otimes \pi^+_y, 
&&\Pi_{xy}^{-}:=\pi^-_x \otimes \pi^-_y,
\end{align}
\end{subequations}
where a subscript $x$ or $y$ of the 1D projectors $\pi, \pi^-$ and $\pi^+$ 
refers to the spatial coordinate in which this projector is applied.
These 2D projectors can be described as local Gauss-Radau projectors;
several of them were discussed in
\cite{ChengJiangStynes2023,Cheng2021:Calcolo,ChengStynes2023,CYM2022,Zhu:2dMC}. 
To analyse them systematically, we regard them as
 ``vertex-edge-element approximation" operators
and divide them into three categories that are  
indicated by the symbols ``o", ``x" and ``$\bullet$" in Figure~\ref{Fig:projectors}, as we now explain.

(1) The symbol ``o" indicates ``element approximation",
which corresponds to 
the $L^2$ projector $\Pi$ satisfying the element orthogonality condition \eqref{L2:prj:2d}.

(2) The symbol ``x" indicates ``edge-element approximation",
which corresponds to the projectors
$\Pi_x^{+}, \Pi_x^{-}, \Pi_y^{+}$ and $\Pi_y^{-}$
satisfying respectively
\begin{subequations}\label{GR:single:direction}
\begin{align}
\label{Pi:x:+}
&
\begin{cases}
\dual{\Pi_x^{+}z}{\vphu}_{K_{ij}}
=\dual{z}{\vphu}_{K_{ij}}
&\forall \vphu\in \mathcal{P}^{k-1}(I_{i})\otimes \mathcal{P}^{k}(J_j),
\\
\dual{(\Pi_x^{+}z)_{i-1,y}^{+}}{\vphu}_{J_j}
=\dual{z^{+}_{i-1,y}}{\vphu}_{J_j}
&\forall \vphu\in \mathcal{P}^{k}(J_j);
\end{cases}
\\
\label{Pi:x:-}
&
\begin{cases}
\dual{\Pi_x^{-}z}{\vphu}_{K_{ij}}
=\dual{z}{\vphu}_{K_{ij}}
&\qquad\forall \vphu\in \mathcal{P}^{k-1}(I_{i})\otimes \mathcal{P}^{k}(J_j),
\\
\dual{(\Pi_x^{-}z)_{i,y}^{-}}{\vphu}_{J_j}
=\dual{z^{-}_{i,y}}{\vphu}_{J_j}
&\qquad\forall \vphu\in \mathcal{P}^{k}(J_j);
\end{cases}
\\
\label{Pi:y:+}
&
\begin{cases}
\dual{\Pi_y^{+}z}{\vphu}_{K_{ij}}
=\dual{z}{\vphu}_{K_{ij}}
&\forall \vphu\in \mathcal{P}^{k}(I_{i})\otimes \mathcal{P}^{k-1}(J_j),
\\
\dual{(\Pi_y^{+} z)_{x,j-1}^{+}}{\vphu}_{I_i}
=\dual{z_{x,j-1}^{+}}{\vphu}_{I_i}
&\forall \vphu\in \mathcal{P}^{k}(I_i);
\end{cases}
\\
\label{Pi:y:-}
&
\begin{cases}
\dual{\Pi_y^{-}z}{\vphu}_{K_{ij}}
=\dual{z}{\vphu}_{K_{ij}}
&\qquad\forall \vphu\in \mathcal{P}^{k}(I_{i})\otimes \mathcal{P}^{k-1}(J_j),
\\
\dual{(\Pi_y^{-} z)_{x,j}^{-}}{\vphu}_{I_i}
=\dual{z_{x,j}^{-}}{\vphu}_{I_i}
&\qquad\forall \vphu\in \mathcal{P}^{k}(I_i).
\end{cases}
\end{align}
\end{subequations}

(3) The symbol ``$\bullet$" indicates ``vertex-edge-element approximation", which corresponds to the projectors
$\Pi_{xy}^{+},\Pi_{xy}^{\pm},\Pi_{xy}^{\mp}$ and $\Pi_{xy}^{-}$
satisfying respectively
\begin{align*}
&
\begin{cases}
\dual{\Pi_{xy}^{+}z}{\vphu}_{K_{ij}}=\dual{z}{\vphu}_{K_{ij}}
&\forall \vphu\in \mathcal{Q}^{k-1}(K_{ij});
\\
\dual{(\Pi_{xy}^{+} z)_{i-1,y}^{+}}{\vphu}_{J_j}
=\dual{z_{i-1,y}^{+}}{\vphu}_{J_j}
&\forall \vphu\in \mathcal{P}^{k-1}(J_j);
\\
\dual{(\Pi_{xy}^{+} z)_{x,j-1}^{+}}{\vphu}_{I_i}
=\dual{z_{x,j-1}^{+}}{\vphu}_{I_i}
&\forall \vphu\in \mathcal{P}^{k-1}(I_i);
\\
(\Pi_{xy}^{+} z)(x_{i-1}^{+},y_{j-1}^{+})
=z(x_{i-1}^{+},y_{j-1}^{+});
\end{cases}
\\
&
\begin{cases}
\dual{\Pi_{xy}^{\pm}z}{\vphu}_{K_{ij}}
=\dual{z}{\vphu}_{K_{ij}}
&\quad\forall \vphu\in \mathcal{Q}^{k-1}(K_{ij});
\\
\dual{(\Pi_{xy}^{\pm} z)_{i-1,y}^{+}}{\vphu}_{J_j}
=\dual{z_{i-1,y}^{+}}{\vphu}_{J_j}
&\quad\forall \vphu\in \mathcal{P}^{k-1}(J_j);
\\
\dual{(\Pi_{xy}^{\pm} z)_{x,j}^{-}}{\vphu}_{I_i}
=\dual{z_{x,j}^{-}}{\vphu}_{I_i}
&\quad\forall \vphu\in \mathcal{P}^{k-1}(I_i);
\\
(\Pi_{xy}^{\pm} z)(x_{i-1}^{+},y_{j}^{-}) 
=z(x_{i-1}^{+},y_{j}^{-});
\end{cases}
\\
&
\begin{cases}
\dual{\Pi_{xy}^{\mp}z}{\vphu}_{K_{ij}}
=\dual{z}{\vphu}_{K_{ij}}
&\quad\forall \vphu\in \mathcal{Q}^{k-1}(K_{ij});
\\
\dual{(\Pi_{xy}^{\mp} z)_{i,y}^{-}}{\vphu}_{J_j}
=\dual{z_{i,y}^{-}}{\vphu}_{J_j}
&\quad\forall \vphu\in \mathcal{P}^{k-1}(J_j);
\\
\dual{(\Pi_{xy}^{\mp} z)_{x,j-1}^{+}}{\vphu}_{I_i}
=\dual{z_{x,j-1}^{+}}{\vphu}_{I_i}
&\quad\forall \vphu\in \mathcal{P}^{k-1}(I_i);
\\
(\Pi_{xy}^{\mp} z)(x_{i}^{-},y_{j-1}^{+}) 
=z(x_{i}^{-},y_{j-1}^{+});
\end{cases}
\\
&
\begin{cases}
\dual{\Pi_{xy}^{-}z}{\vphu}_{K_{ij}}
=\dual{z}{\vphu}_{K_{ij}}
&\qquad\quad\forall \vphu\in \mathcal{Q}^{k-1}(K_{ij});
\\
\dual{(\Pi_{xy}^{-} z)_{i,y}^{-}}{\vphu}_{J_j}
=\dual{z_{i,y}^{-}}{\vphu}_{J_j}
&\qquad\quad\forall \vphu\in \mathcal{P}^{k-1}(J_j);
\\
\dual{(\Pi_{xy}^{-} z)_{x,j}^{-}}{\vphu}_{I_i}
=\dual{z_{x,j}^{-}}{\vphu}_{I_i}
&\qquad\quad \forall \vphu\in \mathcal{P}^{k-1}(I_i);
\\
(\Pi_{xy}^{-} z)(x_{i}^{-},y_{j}^{-}) 
=z(x_{i}^{-},y_{j}^{-}).
\end{cases}
\end{align*}

For our error analysis of the LDG method, we make various combinations of the projectors
\eqref{2d:projectors} to give three composite projections of $u,p$ and $q$ into $\spc$:
\begin{align}\label{prj:u:2d}
\prou u&
=
\begin{cases}
\Pi_{xy}^{-}u   & \text{in } \dws,\\
\Pi_{y}^{-}u    & \text{in } \ds,\\
\Pi_{xy}^{\pm}u & \text{in } \des,\\
\Pi_{x}^{-}u    & \text{in } \dw,\\
\Pi u            & \text{in } \dreg,\\
\Pi_{x}^{+}u    & \text{in } \de,\\
\Pi_{xy}^{\mp}u & \text{in } \dwn,\\
\Pi_y^{+} u     & \text{in } \dn,\\
\Pi_{xy}^{+}u   & \text{in } \den,
\end{cases}
\quad
\prop p
=
\begin{cases}
\Pi_{x}^{+}p & \text{in } \dxa,\\
\Pi  p       & \text{in } \dxb,\\
\Pi_{x}^{-} p& \text{in } \dxc,
\end{cases}
\quad
\text{ and }\quad 
\proq q
=
\begin{cases}
\Pi_{y}^{+}q & \text{in } \dya,\\
\Pi q& \text{in }         \dyb,\\
\Pi_{y}^{-}q & \text{in } \dyc.
\end{cases}
\end{align}
Figure~\ref{Fig:projectors}  gives diagrammatic representations of 
$\prou, \prop$ and $\proq$.

\begin{figure}[h]
	\begin{minipage}{0.49\linewidth}
		\vspace{2pt}
		\centerline{\includegraphics[width=0.8\textwidth]{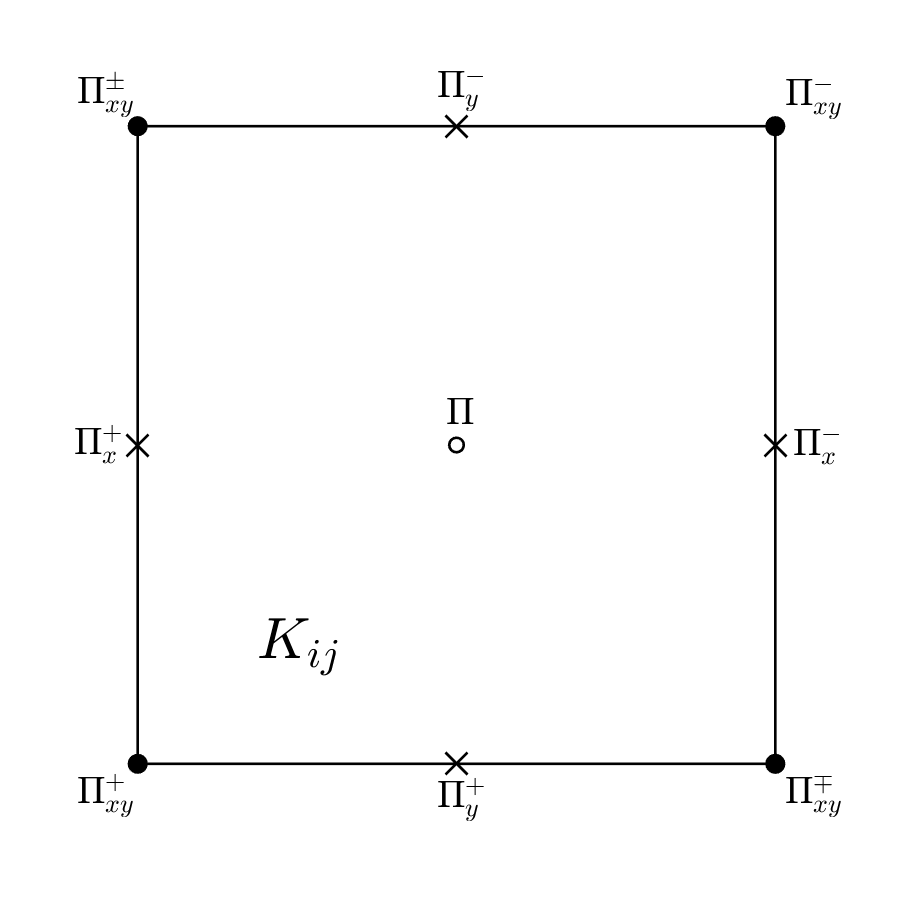}}
		\centerline{Vertex-edge-element approximation}
		\vspace{2pt}
		\centerline{\includegraphics[width=0.8\textwidth]{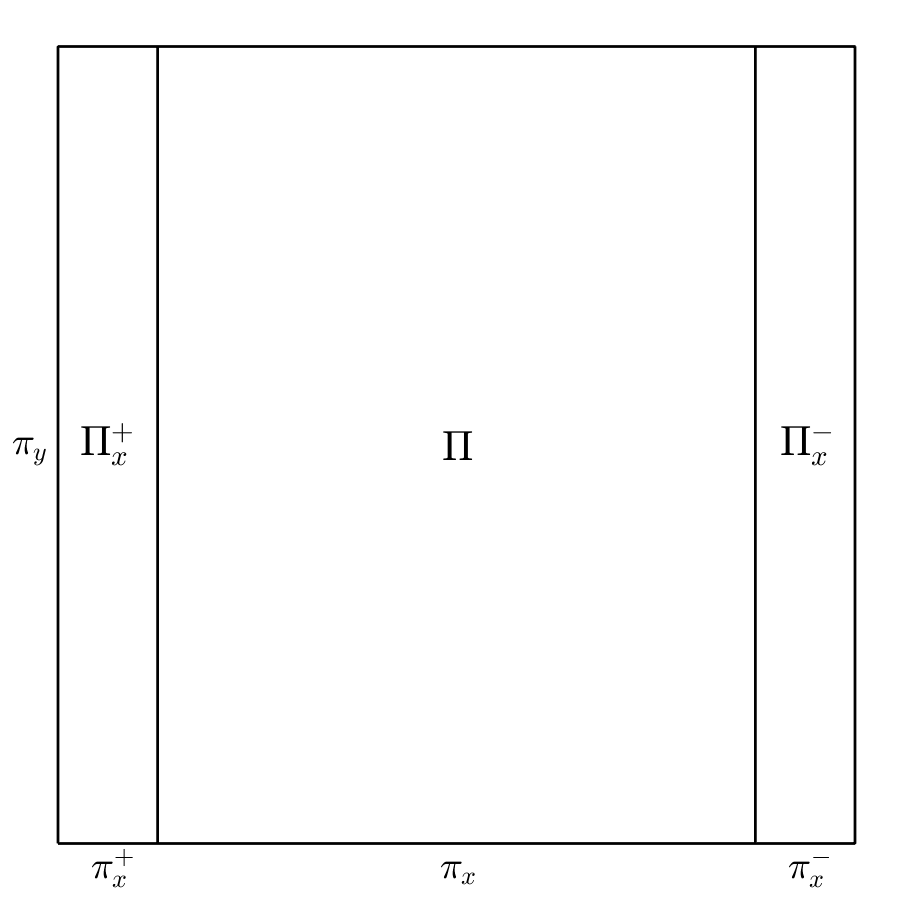}}
		\centerline{Projector $\prop$}
	\end{minipage}
	\begin{minipage}{0.49\linewidth}
		\vspace{2pt}
		\centerline{\includegraphics[width=0.8\textwidth]{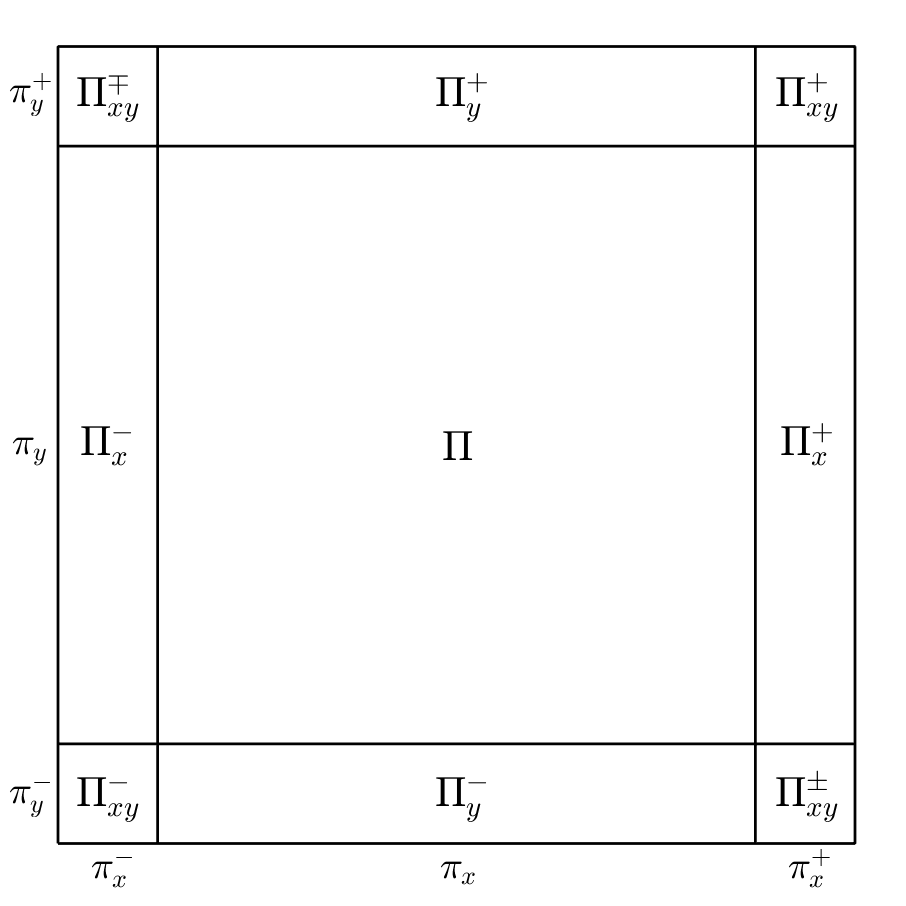}}
		\centerline{Projector $\prou$}
		\vspace{2pt}
		\centerline{\includegraphics[width=0.8\textwidth]{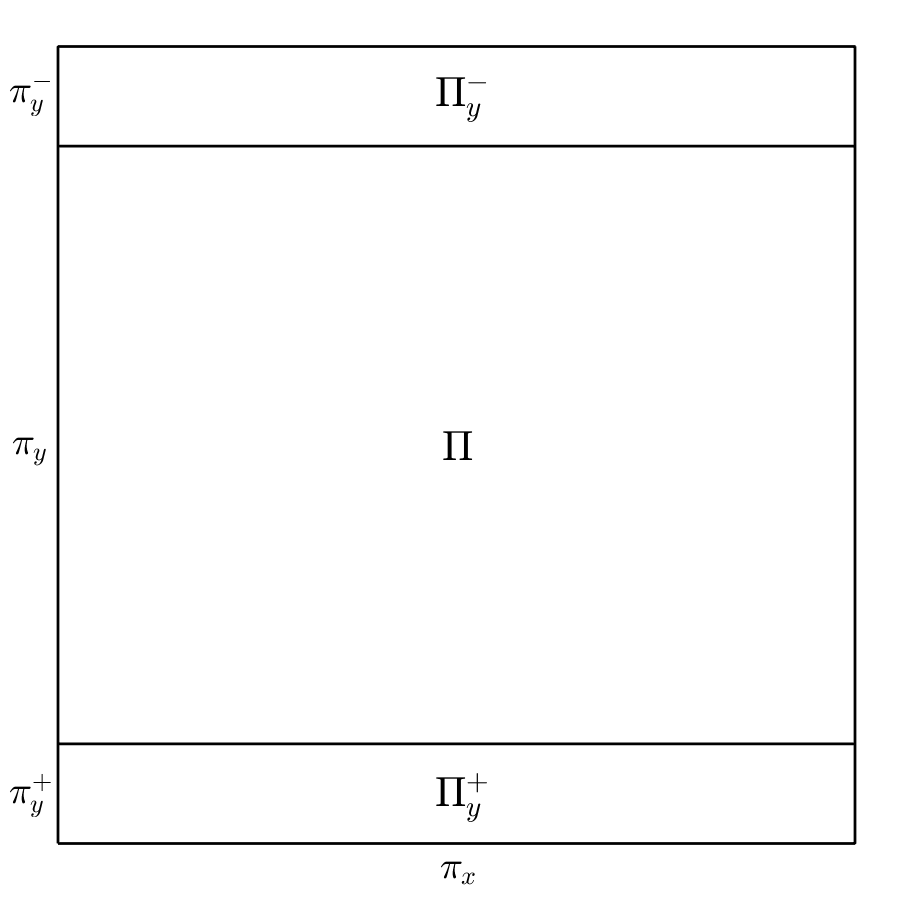}}
		\centerline{Projector $\proq$}
	\end{minipage}
	\caption{Vertex-edge-element approximation and three projectors $\prou$, $\prop$ and $\proq$.}
	\label{Fig:projectors}
\end{figure}

\begin{remark}
The projectors $\prou, \prop$ and $\proq$ are constructed to yield  
the energy-norm superclose property~\eqref{xisuperclose}, which shows
that the LDG solution $\wph=(\uph,\pph,\qph)\in \spc^3$ lies very close to the projection
$(\prou u,\prop p,\proq q)\in\spc^3$ of the true solution $(u,p,q)$ of~\eqref{spp:R-D:2d}.
\end{remark}  

\subsection{Stability and approximation properties of projectors}
Let $\prog\in\{\Pi,\Pi_x^{-},\Pi_x^{+},\Pi_y^{-},\Pi_y^{+},
\Pi_{xy}^{+},\Pi_{xy}^{\pm},\Pi_{xy}^{\mp}, \Pi_{xy}^{-}\}$.
	Similarly to \cite[Lemma 5]{Cheng2021:Calcolo},
	one  has the following stability properties in the $L^{\infty}$ and $L^2$ norms
	(here $C>0$ is some generic constant as described at the end of Section~\ref{sec:intro}):
	\begin{subequations}\label{2GR:stb:app}
		\begin{align}\label{L0:stb}
		\norm{\prog z}_{L^\infty(K_{ij})} 
		&\leq  C \norm{z}_{L^\infty(K_{ij})},
		\\
		\label{L2:stb:L2}
		\norm{\Pi z}_{K_{ij}}&\leq  \norm{z}_{K_{ij}},
		\\
		\label{L2:stb:GR1}
		\norm{\Pi^-_x z}_{K_{ij}}&\leq  C
		\Big[
		\norm{z}_{K_{ij}} 
		+h^{1/2}_{i}\norm{z^-_{i,y}}_{J_j}
		\Big],
		\\
		\label{L2:stb:GR2}
		\norm{\Pi^-_y z}_{K_{ij}}&\leq  C
		\Big[
		\norm{z}_{K_{ij}} + h^{1/2}_{j}\norm{z^-_{x,j}}_{I_i}
		\Big],
		\\
		\label{L2:stb:GR3}
		\norm{\Pi^-_{xy} z}_{K_{ij}}&\leq  C
		\Big[
		\norm{z}_{K_{ij}} + h^{1/2}_{j}\norm{z^-_{x,j}}_{I_i}
		+h^{1/2}_{i}\norm{z^-_{i,y}}_{J_j}
		+(h_{i}h_{j})^{1/2}|z^{-}_{ij}|
		\Big],
		\end{align}
	\end{subequations}
where $z^{-}_{ij}:=z(x_i^-,y_j^-)$.
Analogous bounds are valid for the remaining projectors 
$\Pi_x^{+}$, $\Pi_y^{+}$, $\Pi_{xy}^{\pm}$, $\Pi_{xy}^{\mp}$ and $\Pi_{xy}^{+}$. Furthermore, 
one has the following approximation properties 
(see, e.g., \cite[Lemma 3]{Cheng2021:Calcolo} and \cite[Lemma 4.3]{Zhu:2dMC}
for the case $k>0$,  and \cite[Lemma~3.8]{GeorgoulisThesis} and \cite[p.60]{Georgoulis06} 
for the case $k=0$):
	\begin{subequations}\label{2GR:stb:app:2}
		\begin{align}
		\label{GR:app}
		\norm{z-\prog z}_{L^\infty(K_{ij})}
		&\leq 
		C \left[h_{i}^{k+1}\norm{\partial_x^{k+1}z}_{L^\infty(K_{ij})}
		+h_{j}^{k+1}\norm{\partial_y^{k+1}z}_{L^\infty(K_{ij})}
		\right],
		\\
		\label{GR:L^2:app}
		\norm{z-\prog z}_{K_{ij}}
		&\leq 
		C \left[h_{i}^{k+1}\norm{\partial_x^{k+1}z}_{K_{ij}}
		+h_{j}^{k+1}\norm{\partial_y^{k+1}z}_{K_{ij}}
		\right].
		\end{align}
	\end{subequations}

Now we turn to $\prou, \prop$ and $\proq$.
Let $\eta_u:=u-\prou u, \, \eta_p:=p-\prop p$ and $\eta_q:=q-\proq q$.
These projection errors satisfy the following bounds.

\begin{lemma}\label{lemma:app:2d}
Assume that Lemma~\ref{lemma:prop:2d} is valid for $m=k$.
Then for the solution $(u,p,q)$ of~\eqref{para:rewrite:2d}, 
   there exists a constant $C>0$ such that
   \begin{subequations}
	\begin{align}
	\label{bound:eu:L2:2d}
	&\norm{\eta_u}_{\Omega\backslash\dreg}\leq C\sq^{1/4}(N^{-1}\ln N)^{k+1},
	 \norm{\eta_u}_{\dreg}\leq C\Big[N^{-(k+1)}+\sq^{1/4}N^{-\sigma}\Big], 
	\\
	\label{bound:eu:L0:2d}
	&\norm{\eta_u}_{L^\infty(\Omega\backslash\dreg)}\leq C(N^{-1}\ln N)^{k+1},
	\norm{\eta_u}_{L^\infty(\dreg)}\leq CN^{-(k+1)},
	\\
	\label{bound:ep:L2:2d}
	&\norm{\eta_p}_{\dxa\cup\dxc}\leq C\sq^{3/4}(N^{-1}\ln N)^{k+1},
	\norm{\eta_p}_{\dxb} \leq C\Big[\sq N^{-(k+1)}+\sq^{1/2}N^{-\sigma}\Big], 
	\\
	\label{bound:ep:L0:2d}
	&\norm{\eta_p}_{L^\infty(\dxa\cup\dxc)}\leq C\sq^{1/2}(N^{-1}\ln N)^{k+1},
	\; \norm{\eta_p}_{L^\infty(\dxb)}\leq C\sq^{1/2} N^{-(k+1)},
	\\
	\label{bound:eq:L2:2d}
	&\norm{\eta_q}_{\dya\cup\dyc}\leq C\sq^{3/4}(N^{-1}\ln N)^{k+1},\
	\norm{\eta_q}_{\dyb} \leq C\Big[\sq N^{-(k+1)}+\sq^{1/2}N^{-\sigma}\Big], 
	\\
	\label{bound:eq:L0:2d}
	&\norm{\eta_q}_{L^\infty(\dya\cup\dyc)}\leq C\sq^{1/2}(N^{-1}\ln N)^{k+1},
	\; \norm{\eta_q}_{L^\infty(\dyb)} \leq C\sq^{1/2} N^{-(k+1)}.
	\end{align}
	\end{subequations}
\end{lemma}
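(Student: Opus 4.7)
The plan is to split $u=\bar u+\sum_{i=1}^4 u_i^{\rm b}+\sum_{i=1}^4 u_i^{\rm c}$ as in Lemma~\ref{lemma:prop:2d}, propagate the decomposition to $p=\sq u_x$ and $q=\sq u_y$, and estimate the projection error of each component on each of the nine subregions $\Omega_{ij}$ separately. On each subregion the projector appearing in \eqref{prj:u:2d} is one of the nine operators listed in \eqref{2d:projectors}, so the $L^2$ and $L^\infty$ approximation bounds \eqref{GR:app}--\eqref{GR:L^2:app} apply directly. For the smooth part $\bar u$, derivatives are uniformly $O(1)$ by \eqref{reg:smooth}, so \eqref{GR:L^2:app} summed over $K_{ij}\subset\Omega_{ij}$ yields $C(h^{k+1}+H^{k+1})|\Omega_{ij}|^{1/2}$, giving $O(N^{-(k+1)})$ on $\dreg$ and a strictly smaller quantity in each layer subregion.

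For an edge-layer component such as $u_1^{\rm b}$, the crucial combination is the derivative bound $|\partial_x^{k+1}u_1^{\rm b}|\le C\sq^{-(k+1)/2}e^{-\beta x/\ssq}$ from \eqref{reg:boundary} together with the algebraic identity $h^{k+1}\sq^{-(k+1)/2}=C(N^{-1}\ln N)^{k+1}$ that follows from \eqref{tau:2}; the key integration bound $\sum_{K\subset\dxa}\|e^{-\beta x/\ssq}\|^2_K\le\int_0^1\!\int_0^\tau e^{-2\beta x/\ssq}\,dx\,dy\le C\sq^{1/2}$ then supplies, after taking the square root, exactly the $\sq^{1/4}$ factor in \eqref{bound:eu:L2:2d}. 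The $\partial_y^{k+1}$ contribution is subordinate because $u_1^{\rm b}$ carries no $\sq^{-1}$ factor on $y$-derivatives, while corner components $u_i^{\rm c}$ satisfying $|\partial_x^i\partial_y^j u_1^{\rm c}|\le C\sq^{-(i+j)/2}e^{-\beta(x+y)/\ssq}$ admit the analogous estimate performed in both coordinates, yielding an $O(\sq^{1/2}(N^{-1}\ln N)^{k+1})$ bound on a corner subregion that is absorbed by the edge-layer contribution. For $p=\sq u_x$ (and similarly $q$), each layer derivative gains one factor of $\sq$ from the prefactor and loses one half-power from the differentiation, for a net $\sq^{1/2}$ improvement; this shifts the $\sq^{1/4}$ of \eqref{bound:eu:L2:2d} to $\sq^{3/4}$ in \eqref{bound:ep:L2:2d}--\eqref{bound:eq:L2:2d} and explains the analogous $L^\infty$ shifts. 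In the coarse strips $\dreg$, $\dxb$ and $\dyb$, the layer components are exponentially small since $e^{-\beta\tau/\ssq}=N^{-\sigma}$ by \eqref{tau:2}, producing the additive $N^{-\sigma}$ terms in the bounds; the hypothesis $\sigma\ge k+1$ guarantees these never dominate. The $L^\infty$ estimates \eqref{bound:eu:L0:2d}, \eqref{bound:ep:L0:2d} and \eqref{bound:eq:L0:2d} follow from \eqref{GR:app} via the same template but without the integration step, so the $\sq^{1/4}$ prefactor is absent there.

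The main obstacle is not any single estimate---each is a routine product of (approximation bound) $\times$ (derivative bound) $\times$ (exponential integration)---but rather the bookkeeping required to verify, for every pair (subregion, solution component), that the projector selected in \eqref{prj:u:2d} is compatible with the layer direction of that component, and to ensure that the sums over $K_{ij}$ in mixed fine/coarse strips such as $\dw$, where $h_i=h$ but $h_j=H$, assemble correctly and do not trigger any unnecessary loss from the trace terms in the stability bounds \eqref{L2:stb:GR1}--\eqref{L2:stb:GR3}. Once the nine cases are tabulated systematically, the bounds all fall out by the same template, with the identity $h^{k+1}\sq^{-(k+1)/2}=O((N^{-1}\ln N)^{k+1})$ doing the essential work of converting $\sq$-singular derivative bounds into $\sq$-uniform convergence orders.
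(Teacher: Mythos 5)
Your proposal is correct and follows essentially the same route as the paper: component-wise decomposition via Lemma~\ref{lemma:prop:2d}, the approximation bounds \eqref{GR:app}--\eqref{GR:L^2:app} combined with $h^{k+1}\sq^{-(k+1)/2}=O((N^{-1}\ln N)^{k+1})$ on subregions where the mesh resolves the relevant layer, and the stability bounds \eqref{2GR:stb:app} together with $e^{-\beta\tau/\ssq}=N^{-\sigma}$ on subregions where it does not, with the exponential integration supplying the $\sq^{1/4}$ (edge) and $\sq^{1/2}$ (corner) factors. The only caution is that your opening claim that the approximation bounds ``apply directly'' on every subregion must be read in light of your later remarks: for a layer component on a strip that is coarse in its layer direction (e.g.\ $u_1^{\rm b}$ on $\dxb$) the approximation property would leave an uncontrolled $\sq^{-(k+1)/2}$, so one must switch to the stability-plus-smallness argument there, exactly as the paper does.
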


Similar inequalities were derived in \cite[Lemmas 3.1 and 3.2]{ChengStynes2023} 
for the projectors $\Pi^{-}_{xy}$, $\Pi^{+}_{x}$ and $\Pi^{+}_{y}$
applied to a problem whose solution exhibits exponential and characteristic layers. 
We nevertheless provide a proof of Lemma~\ref{lemma:app:2d} 
because our projections $\prou u, \, \prop p$ and $\proq q$ 
have different definitions in different subregions.

\emph{Proof of \eqref{bound:eu:L2:2d}:} For each component $u_z$ of $u$, set
$\eta_{u_z}=u_z-\prou u_z$. Then $\eta_{u}=\eta_{\bar{u}}
+\sum_{i=1}^4\eta_{u^{\mathrm{b}}_i}+\sum_{i=1}^4\eta_{u^{\mathrm{c}}_i}$.
In our calculations we consider only  $\eta_{\bar{u}}, \eta_{u^{\mathrm{b}}_1}$
and $\eta_{u^{\mathrm{c}}_1}$ as the other terms are handled similarly.
Using \eqref{GR:L^2:app}, \eqref{reg:smooth} and the measures of subregions
$\Omega\backslash\dreg$ and $\dreg$, one obtains easily 
\begin{align}\label{app:smooth:L2}
\norm{\eta_{\bar{u}}}_{\Omega\backslash\dreg}\leq C\tau^{1/2}N^{-(k+1)}\leq C\sq^{1/4}N^{-(k+1)}(\ln N)^{1/2} 
\ \text{ and }\
\norm{\eta_{\bar{u}}}_{\dreg}\leq CN^{-(k+1)}.
\end{align}
	
For the boundary layer component $u^{\mathrm{b}}_1$, if $K_{ij}\in\dxa\cup\dxc$ 
then the $L^2$-approximation property \eqref{GR:L^2:app} 
and the bound \eqref{reg:boundary} yield 
\begin{align*}
\norm{\eta_{u^{\mathrm{b}}_1}}_{K_{ij}}
&
\leq C\left[h_{i}^{k+1}\norm{\partial_x^{k+1} u^{\mathrm{b}}_1}_{K_{ij}}
+h_{j}^{k+1}\norm{\partial_y^{k+1} u^{\mathrm{b}}_1}_{K_{ij}}
\right]
\\
&\leq C\left\{h^{k+1}\Big[
\int_{K_{ij}} \sq^{-(k+1)}e^{-2\beta x/\sqrt{\varepsilon}}\,\mathrm{d}x\,\mathrm{d}y
\Big]^{1/2}
+H^{k+1}\Big[
\int_{K_{ij}} e^{-2\beta x/\sqrt{\varepsilon}}\,\mathrm{d}x\,\mathrm{d}y
\Big]^{1/2}
\right\}
\\
&\leq C(N^{-1}\ln N)^{k+1}\norm{e^{-\beta x/\sqrt{\varepsilon}}}_{K_{ij}}.
\end{align*}
Hence,
\begin{align}\label{boundary:1:layer}
\norm{\eta_{u^{\mathrm{b}}_1}}_{\dxa\cup\dxc}
\leq C(N^{-1}\ln N)^{k+1} \norm{e^{-\beta x/\sqrt{\varepsilon}}}
\leq C\sq^{1/4}(N^{-1}\ln N)^{k+1}.
\end{align}
If $K_{ij}\in\ds$, then $\eta_{u^{\mathrm{b}}_1}
=u^{\mathrm{b}}_1-\Pi_y^{-}u^{\mathrm{b}}_1$
and the $L^2$-stability property \eqref{L2:stb:GR2} 
and the bound \eqref{reg:boundary} give
\begin{align*}
\norm{\eta_{u^{\mathrm{b}}_1}}_{K_{ij}}
&\leq C\left[\norm{u^{\mathrm{b}}_1}_{K_{ij}}
+h_j^{1/2}\norm{(u^{\mathrm{b}}_1)^{-}_{x,j}}_{I_{i}}
\right]
\\
&\leq C\left\{\Big[
\int_{K_{ij}} e^{-2\beta x/\sqrt{\varepsilon}}\,\mathrm{d}x\,\mathrm{d}y
\Big]^{1/2}
+h_j^{1/2}\Big[
\int_{I_{i}} e^{-2\beta x/\sqrt{\varepsilon}}\,\mathrm{d}x\Big]^{1/2}
\right\} 
\\
&\leq C\norm{e^{-\beta x/\sqrt{\varepsilon}}}_{K_{ij}},
\end{align*}
which implies
\begin{align}\label{boundary:2:layer}
\norm{\eta_{u^{\mathrm{b}}_1}}_{\ds}
&\leq C\norm{e^{-\beta x/\sqrt{\varepsilon}}}_{\ds}
\leq C\tau^{1/2}
\left(\int_{\tau}^{1-\tau} e^{-2\beta x/\sqrt{\varepsilon}} 
\,\mathrm{d}x\right)^{1/2}
\nonumber\\
&=C\tau^{1/2}\Big[\frac{\ssq}{2\beta}e^{-2\beta \tau/\sqrt{\varepsilon}}
(1-e^{-2\beta (1-2\tau)/\sqrt{\varepsilon}} ) \Big]^{1/2}
\nonumber\\
&
\leq C\tau^{1/2}\sq^{1/4}N^{-\sigma}\leq C\sq^{1/2}N^{-\sigma}(\ln N)^{1/2},
\end{align}
where we used $\tau= \frac{\sigma\ssq}{\beta}\ln N \leq 1/4$ implying
$1-2\tau\geq 2\tau$ and $e^{-2\beta (1-2\tau)/\sqrt{\varepsilon}}\leq 
e^{-4\beta\tau/\ssq}=N^{-4\sigma}\leq 4^{-4\sigma}\leq 1/16$ for $N\geq 4$ and $\sigma\geq \hat{k}+1\geq 1$.

One has similarly
\begin{align}\label{boundary:3:layer}
\norm{\eta_{u^{\mathrm{b}}_1}}_{\dn}
\leq C\sq^{1/2}N^{-\sigma}(\ln N)^{1/2}.
\end{align}
If $K_{ij}\in\dreg$, then $\eta_{u^{\mathrm{b}}_1} =u^{\mathrm{b}}_1-\Pi u^{\mathrm{b}}_1$ 
and we use the $L^2$-stability property \eqref{L2:stb:L2} 
and the bound \eqref{reg:boundary} to get 	
\begin{align}\label{boundary:4:layer}
\norm{\eta_{u^{\mathrm{b}}_1}}_{\dreg}\leq 
2\norm{u^{\mathrm{b}}_1}_{\dreg}
\leq C\norm{e^{-\beta x/\sqrt{\varepsilon}}}_{\dreg}
\leq C\left(\int_{\tau}^{1-\tau} e^{-2\beta x/\sqrt{\varepsilon}} 
\,\mathrm{d}x\right)^{1/2}
\leq C\sq^{1/4}N^{-\sigma}.
\end{align}
But $\sigma\geq k+1$, so the bounds \eqref{boundary:1:layer}--\eqref{boundary:4:layer} imply that
\begin{align}\label{app:bry1:L2}
\norm{\eta_{u^{\mathrm{b}}_1}}_{\Omega\backslash\dreg}
\leq C\sq^{1/4}(N^{-1}\ln N)^{k+1} 
\quad\text{and}\quad
\norm{\eta_{u^{\mathrm{b}}_1}}_{\dreg}\leq C\sq^{1/4}N^{-\sigma}.
\end{align}

For the corner layer component $u^{\mathrm{c}}_1$, 
if $K_{ij}\in\dws\cup \dwn\cup\des\cup\den$ 
we use the $L^2$-approximation property
\eqref{GR:L^2:app} and the bound \eqref{reg:corner} to get 
\begin{align*}
\norm{\eta_{u^{\mathrm{c}}_1}}_{K_{ij}}
&
\leq C\left[h_{i}^{k+1}\norm{\partial_x^{k+1} u^{\mathrm{c}}_1}_{K_{ij}}
+h_{j}^{k+1}\norm{\partial_y^{k+1} u^{\mathrm{c}}_1}_{K_{ij}}
\right]
\\
&\leq Ch^{k+1}\left[
\int_{K_{ij}} \sq^{-(k+1)}e^{-2\beta (x+y)/\sqrt{\varepsilon}}\,\mathrm{d}x\,\mathrm{d}y
\right]^{1/2} \\
&\leq C(N^{-1}\ln N)^{k+1}\norm{e^{-\beta (x+y)/\sqrt{\varepsilon}}}_{K_{ij}}.
\end{align*}
Hence
\begin{align}\label{corner:1:layer}
\norm{\eta_{u^{\mathrm{c}}_1}}_{\dws\cup \dwn\cup\des\cup\den}
&\leq C(N^{-1}\ln N)^{k+1}\norm{e^{-\beta (x+y)/\sqrt{\varepsilon}}}_{\dws\cup \dwn\cup\des\cup\den} \notag\\
&\leq C\sq^{1/2}(N^{-1}\ln N)^{k+1}.
\end{align}
If $K_{ij}\in\dw$, then $\eta_{u^{\mathrm{c}}_1}
=u^{\mathrm{c}}_1-\Pi_x^{-}u^{\mathrm{c}}_1$; use the $L^2$-stability property \eqref{L2:stb:GR1} 
and the bound \eqref{reg:corner} to obtain
\begin{align*}
\norm{\eta_{u^{\mathrm{c}}_1}}_{K_{ij}}
&\leq C\left[\norm{u^{\mathrm{c}}_1}_{K_{ij}}
+h_i^{1/2}\norm{(u^{\mathrm{c}}_1)^{-}_{i,y}}_{J_{j}}
\right]
\\
&
\leq 
C\left[\norm{e^{-\beta (x+y)/\sqrt{\varepsilon}}}_{K_{ij}}
+h_i^{1/2}\norm{e^{-\beta (x_i+y)/\sqrt{\varepsilon}}}_{J_{j}}\right]
\\
&
\leq Ch_i^{1/2}\norm{e^{-\beta y/\sqrt{\varepsilon}}}_{J_{j}},
\end{align*}
which yields
\begin{align}\label{corner:2:layer}
\norm{\eta_{u^{\mathrm{c}}_1}}_{\dw}
&
\leq C\tau^{1/2}\left(\int_{\tau}^{1-\tau} e^{-2\beta y/\sqrt{\varepsilon}} \,\mathrm{d}y\right)^{1/2}
\leq C\tau^{1/2}\sq^{1/4}N^{-\sigma}\leq C\sq^{1/2}N^{-\sigma}(\ln N)^{1/2}.
\end{align}
In a similar manner, one gets the same bound
for $\norm{\eta_{u^{\mathrm{c}}_1}}_{\ds}$,
$\norm{\eta_{u^{\mathrm{c}}_1}}_{\dn}$ and
$\norm{\eta_{u^{\mathrm{c}}_1}}_{\de}$.
If $K_{ij}\in\dreg$,  then $\eta_{u^{\mathrm{c}}_1}
=u^{\mathrm{c}}_1-\Pi u^{\mathrm{c}}_1$ and we use the $L^2$-stability property \eqref{L2:stb:L2} 
and the bound \eqref{reg:corner} to get	
\begin{align}\label{corner:3:layer}
\norm{\eta_{u^{\mathrm{c}}_1}}_{\dreg}\leq 
2\norm{u^{\mathrm{c}}_1}_{\dreg}
\leq C\norm{e^{-\beta (x+y)/\sqrt{\varepsilon}}}_{\dreg}
\leq C\sq^{1/2}N^{-2\sigma}.
\end{align}
The bounds \eqref{corner:1:layer}, \eqref{corner:2:layer} and \eqref{corner:3:layer} imply that
\begin{align}\label{app:crn1:L2}
\norm{\eta_{u^{\mathrm{c}}_1}}_{\Omega\backslash\dreg}
\leq C\sq^{1/2}(N^{-1}\ln N)^{k+1} 
\quad\text{and}\quad
\norm{\eta_{u^{\mathrm{c}}_1}}_{\dreg}\leq C\sq^{1/2}N^{-2\sigma}.
\end{align}
Combining \eqref{app:smooth:L2}, \eqref{app:bry1:L2} and \eqref{app:crn1:L2},
we have \eqref{bound:eu:L2:2d}.

\emph{Proof of \eqref{bound:eu:L0:2d}:}
The $L^{\infty}$-approximation property \eqref{GR:app} and the bound~\eqref{reg:smooth} give 
\begin{align}\label{smooth:L0}
\norm{\eta_{\bar{u}}}_{L^\infty(K_{ij})}
&\leq 
C \left[h_{i}^{k+1}\norm{\partial_x^{k+1}\bar{u}}_{L^\infty(K_{ij})}
+h_{j}^{k+1}\norm{\partial_y^{k+1}\bar{u}}_{L^\infty(K_{ij})}
\right] \leq CN^{-(k+1)}.
\end{align}

Consider now the boundary layer component $u^{\mathrm{b}}_1$.
If $K_{ij}\in\Omega_{1}^x$, use the $L^{\infty}$-approximation property \eqref{GR:app} 
and the bound \eqref{reg:boundary} to get
\begin{align}
\norm{\eta_{u^{\mathrm{b}}_1}}_{L^\infty(K_{ij})}
&\leq 
C \left[h_i^{k+1}\norm{\partial_x^{k+1}u^{\mathrm{b}}_1}_{L^\infty(K_{ij})}
+h_j^{k+1}\norm{\partial_y^{k+1}u^{\mathrm{b}}_1}_{L^\infty(K_{ij})}
\right] 
\nonumber\\
&\leq 
C\left[h^{k+1}\norm{\sq^{-(k+1)/2}e^{-\beta x/\sqrt{\varepsilon}}}_{L^\infty(K_{ij})}
+H^{k+1}\norm{e^{-\beta x/\sqrt{\varepsilon}}}_{L^\infty(K_{ij})}
\right]
\nonumber\\
\label{bry:L0}
&\leq C(N^{-1}\ln N)^{k+1}.
\end{align}
If $K_{ij}\in\Omega\backslash\Omega_{1}^x$, 
the $L^{\infty}$-stability \eqref{L0:stb} 
and the bound \eqref{reg:boundary} yield
\begin{align}\label{boundary:L0:layer}
\norm{\eta_{u^{\mathrm{b}}_1}}_{L^{\infty}(K_{ij})}
\leq 
C\norm{u^{\mathrm{b}}_1}_{L^{\infty}(K_{ij})}
\leq C\norm{e^{-\beta x/\sqrt{\varepsilon}}}_{L^{\infty}(K_{ij})}
\leq Ce^{-\beta \tau/\sqrt{\varepsilon}}
\leq C N^{-\sigma}.
\end{align}

Next, consider the corner layer component $u^{\mathrm{c}}_1$. 
If $K_{ij}\in\dws$, we have
$\eta_{u^{\mathrm{c}}_1}
=u^{\mathrm{c}}_1-\Pi_{xy}^{-}u^{\mathrm{c}}_1$.
The $L^{\infty}$-approximation property \eqref{GR:app} and \eqref{reg:corner}
give 
\begin{align}
\norm{\eta_{u^{\mathrm{c}}_1}}_{L^\infty(K_{ij})}
&\leq 
C \left[h^{k+1}\norm{\partial_x^{k+1}u^{\mathrm{c}}_1}_{L^\infty(K_{ij})}
+h^{k+1}\norm{\partial_y^{k+1}u^{\mathrm{c}}_1}_{L^\infty(K_{ij})}
\right] 
\nonumber\\
\label{crn:L0:bry:layer}
&\leq C(N^{-1}\ln N)^{k+1}\norm{e^{-\beta (x+y)/\sqrt{\varepsilon}}}_{L^{\infty}(K_{ij})}
\leq C(N^{-1}\ln N)^{k+1}.
\end{align}
For $K_{ij}\in\Omega\backslash\dws$, 
we use the $L^{\infty}$-stability property \eqref{L0:stb}
and the bound \eqref{reg:corner} to get
\begin{align}\label{crn:L0:layer}
\norm{\eta_{u^{\mathrm{c}}_1}}_{L^{\infty}(K_{ij})}
\leq C
\norm{u^{\mathrm{c}}_1}_{L^{\infty}(K_{ij})}
\leq C\norm{e^{-\beta (x+y)/\sqrt{\varepsilon}}}_{L^{\infty}(K_{ij})}
\leq Ce^{-\beta \tau/\sqrt{\varepsilon}}
\leq C N^{-\sigma}.
\end{align}

The $L^{\infty}$-bounds for $\eta_{u^{\mathrm{b}}_i}$ and 
$\eta_{u^{\mathrm{c}}_i}$ ($i=2,3,4$) are obtained in a similar manner.
Then \eqref{bound:eu:L0:2d} follows from \eqref{smooth:L0}--\eqref{crn:L0:layer} and a triangle inequality.

\emph{Proof of \eqref{bound:ep:L2:2d}--\eqref{bound:eq:L0:2d}:}
These inequalities can be derived using analogous arguments.

\section{Superapproximation properties of projectors}\label{sec:superapproximation}

In Section~\ref{sec:superapproximation}  we present various superapproximation properties for the three types of vertex-edge-element approximations
appearing in~\eqref{prj:u:2d}.

%
%
\subsection{Superapproximation properties of element projector $\Pi$}\label{sec:superapproxPi}
First, consider the $L^2$-projector $\Pi$. Recall the definition of $\hat k$ in~\eqref{khat}.
\begin{lemma}\label{sup:L2:element}
Let $K_{ij}, K_{i+1,j}\in\Omega_N$ with $h_i=h_{i+1}$.
Let $z\in C^{k+2}(\bar  K_{ij}\cup \bar K_{i+1,j})$.
Then there exists a constant $C>0$ such that for all $\vphu\in\spc$ one has
\begin{align*}
\left|\int_{J_j}\average{z-\Pi z}_{i,y}\jump{\vphu}_{i,y} \,\mathrm{d}y\right|
	&\leq C\left[h_i^{\hat{k}}\norm{\partial_x^{\hat{k}+1}z}_{K_{ij}\cup K_{i+1,j}} \right. \\
	&\qquad\qquad \left. +h_i^{k+1}\norm{\partial_x^{k+2}z}_{K_{ij}\cup K_{i+1,j}}\right]
		\norm{\vphu}_{K_{ij}\cup K_{i+1,j}}.
\end{align*}
Analogously,  if  $z\in C^{k+2}(\bar K_{ij}\cup \bar K_{i,j+1})$ with $h_j=h_{j+1}$, then
\begin{align*}
\left|\int_{I_i}\average{z-\Pi z}_{x,j}\jump{\vphu}_{x,j} \,\mathrm{d}x\right|
	&\leq C\left[h_j^{\hat{k}}\norm{\partial_y^{\hat{k}+1}z}_{K_{ij}\cup K_{i,j+1}} \right.  \\
	&\qquad\qquad \left. +h_j^{k+1}\norm{\partial_y^{k+2}z}_{K_{ij}\cup K_{i,j+1}} \right]
		\norm{\vphu}_{K_{ij}\cup K_{i,j+1}}.
\end{align*}
\end{lemma}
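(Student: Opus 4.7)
The plan is to reduce the two-dimensional superapproximation estimate to a one-dimensional superconvergence statement for the $L^2$ projector on a locally uniform mesh, then to assemble the final bound by Cauchy--Schwarz together with a standard inverse trace inequality. I will only treat the vertical-edge case in detail; the horizontal-edge case is symmetric.

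The first step is a reduction via $\pi_y$. Since $\vphu\in\spc$, the trace $V(y):=\jump{\vphu}_{i,y}\in\mathcal{P}^{k}(J_j)$, and since $\Pi = \pi_x\otimes\pi_y$ with $\pi_y^2 = \pi_y$, the defining property of $\pi_y$ yields
\begin{equation*}
\int_{J_j}\average{z-\Pi z}_{i,y}\,V\,\mathrm{d}y
=\int_{J_j}\average{(I-\pi_x)\pi_y z}_{i,y}\,V\,\mathrm{d}y,
\end{equation*}
because along the trace $x=x_i^{\pm}$ the $\pi_y$-component of the error integrates away against the $\mathcal{P}^{k}(J_j)$ test function $V$. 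Setting $\tilde z := \pi_y z$, the problem is now an $x$-direction $L^2$-projection error of a function that is still smooth in $x$.

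The second step is the key one-dimensional superconvergence estimate for $(I-\pi_x)\tilde z$ on the two equal-width intervals $I_i,I_{i+1}$. For each fixed $y$, I would Taylor-expand $\tilde z(\cdot,y)$ about the element midpoints $m_i$ and $m_{i+1}$ to order $k+1$, project each side onto $\mathcal{P}^k$, and use the Legendre expansion of $\tilde x^{k+1}$ on $[-1,1]$. The leading one-sided traces of $(I-\pi_x)\tilde z$ at $x_i^{\pm}$ are proportional to $P_{k+1}(\pm 1)=(\pm 1)^{k+1}$, so in the average they combine as $\partial_x^{k+1}\tilde z(m_i,y)+(-1)^{k+1}\partial_x^{k+1}\tilde z(m_{i+1},y)$. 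For $k$ even the signs are opposite and the equality $h_i=h_{i+1}$ lets a mean-value argument produce an extra factor of $h$, boosting the order by one; for $k$ odd the signs agree and no such cancellation occurs. Writing the remainders in integral form to keep $L^2$ bounds yields, pointwise in $y$,
\begin{equation*}
|\average{(I-\pi_x)\tilde z}_{i,y}(y)|
\le C\bigl[h_i^{\hat k+1/2}\|\partial_x^{\hat k+1}\tilde z(\cdot,y)\|_{L^2(I_i\cup I_{i+1})}
+h_i^{k+3/2}\|\partial_x^{k+2}\tilde z(\cdot,y)\|_{L^2(I_i\cup I_{i+1})}\bigr],
\end{equation*}
covering both parities uniformly through the definition~\eqref{khat} of $\hat k$.

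The third step is assembly. I would square and integrate the pointwise bound over $y$, use the $L^2$-stability of $\pi_y$ together with $\partial_x^{m}\tilde z=\pi_y\partial_x^{m}z$ to replace $\tilde z$ by $z$, apply Cauchy--Schwarz in $y$ against $V$, and bound the polynomial trace via the standard inverse trace inequality $\|V\|_{L^2(J_j)}\le Ch_i^{-1/2}\|\vphu\|_{L^2(K_{ij}\cup K_{i+1,j})}$. The $h_i^{-1/2}$ cancels the surplus $h_i^{1/2}$ from the previous step, producing exactly the claimed bound with $h_i^{\hat k}$ and $h_i^{k+1}$ coefficients. The hard part is the one-dimensional superconvergence lemma itself: one must carefully isolate the parity-dependent cancellation of the Legendre polynomial traces at $\pm 1$ while controlling the higher-order Taylor remainder in $L^2$ rather than in $L^\infty$, since the eventual 2D application requires square-integrability in $y$. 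Once this is in hand, the $\pi_y$-reduction and the trace inequality turn the 1D result into the stated 2D superapproximation estimate with essentially no extra loss.
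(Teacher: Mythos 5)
Your proposal is correct and rests on the same two pillars as the paper's proof: the $y$-direction projection error disappears against the polynomial trace $\jump{\vphu}_{i,y}\in\mathcal{P}^k(J_j)$ (the paper's vanishing term $A_1$), and the decisive estimate is the parity-dependent cancellation of the $(k+1)$st Legendre coefficient at the shared node of two equal-width cells — $\hat\ell_{k+1}(\pm1)=(\pm1)^{k+1}$ forcing a difference of coefficients for even $k$, hence one extra power of $h_i$ at the cost of $\partial_x^{k+2}z$ — together with an $O(h_i^{k+1}\norm{\partial_x^{k+2}z})$ bound on the $m\ge k+2$ tail. The only difference is organizational: you reduce upfront to a 1D superconvergence statement via $\pi_y$ and apply it pointwise in $y$, whereas the paper works directly with the double Legendre expansion and splits the average into $A_1+A_2+A_3$; both yield the stated bound with identical powers of $h_i$.
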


\begin{proof}
We prove only the first inequality as the second is derived similarly.
For $m=0,1,\dots$, let $\ell_m(x)$ be the rescaled Legendre polynomial of degree $m$ 
defined via Rodrigues' formula for the standard Legendre polynomial $\hat{\ell}_m(s)$
on the interval $\hat{I}:=[-1,1]$, namely,
\begin{align}\label{lengdre}
\ell_m(x)=\hat{\ell}_m\left(\frac{2x-x_i-x_{i-1}}{h_i}\right)
=\hat{\ell}_m(s)=\frac{1}{2^m m!}\partial_s^m (s^2-1)^m.
\end{align}
Then one has
\begin{equation}\label{Legd:norm}
\norm{\hat{\ell}_m}_{\hat{I}}=\sqrt{\frac{2}{2m+1}}
\quad\text{ and  }\quad
\norm{\ell_m}_{I_i}=\sqrt{\frac{h_i}{2m+1}}.
\end{equation}
Since $z\in C^{k+2}(\bar  K_{ij})$, by \cite[Theorem~3.5]{WangJFAA23} 
an expansion in rescaled Legendre polynomials for each $y\in J_j$
yields $z(x,y) = \sum_{m=0}^\infty  \alpha_{y;m}^{ij}(y) \ell_m(x)$,
where for $m=0,1,\dots$ one defines
$\alpha_{y;m}^{ij}(y)  := \frac{1}{\norm{\ell_m}^2_{I_i}}\int_{I_i}z(x,y)\ell_m(x)\,\mathrm{d}x$.
But for each $x\in I_i$ one also has the similar expansion
 $z(x,y) = \sum_{n=0}^\infty  \alpha_{x;n}^{ij}(x) \ell_n(y)$,
where for each $n$ one sets
$\alpha_{x;n}^{ij}(x)  := \frac{1}{\norm{\ell_n}^2_{J_j}}\int_{J_j}z(x,y)\ell_n(y)\,\mathrm{d}y$.
In these expansions,
$|\alpha_{y;m}^{ij}(y)| = O(m^{-(k+5/2)})$ and  $|\alpha_{x;n}^{ij}(x)| = O(n^{-(k+5/2)})$ for $m,n\geq k+3$
follow from $z\in C^{k+2}(\bar  K_{ij})$ and~\cite[Theorem~3.1]{WangJFAA23}.
Hence for each $m$ one gets
\begin{align*}
\alpha_{y;m}^{ij}(y)  &= \frac{1}{\norm{\ell_m}^2_{I_i}}\int_{I_i}z(x,y)\ell_m(x)\,\mathrm{d}x \\
	&=  \frac{1}{\norm{\ell_m}^2_{I_i}}\int_{I_i} \sum_{n=0}^\infty 
		\alpha_{x;n}^{ij}(x) \ell_n(y) 			\ell_m(x)\,\mathrm{d}x \\
	&=  \frac{1}{\norm{\ell_m}^2_{I_i}} \sum_{n=0}^\infty 
	\left(\int_{I_i}\alpha_{x;n}^{ij}(x) \ell_m(x)\,\mathrm{d}x\right)
		\ell_n(y),		
\end{align*}
as the uniform convergence of the series allows us to interchange the integration and the sum.
Recalling the definition of $\alpha_{x;n}^{ij}(x)$, we have
\begin{align*}
\alpha_{y;m}^{ij}(y)=\frac{1}{\norm{\ell_m}^2_{I_i}} \sum_{n=0}^\infty
\frac{1}{\norm{\ell_n}^2_{J_j}}
\left(\int_{K_{ij}}z(x,y) \ell_m(x)\ell_n(y)\,\mathrm{d}x\,\mathrm{d}y\right)\ell_n(y).
\end{align*}
Substituting this identity into $z(x,y) = \sum_{m=0}^\infty  \alpha_{y;m}^{ij}(y) \ell_m(x)$, we obtain finally
\begin{equation}\label{expan:coef}
z(x,y) =\sum_{m=0}^{\infty}\sum_{n=0}^{\infty}
\alpha^{ij}_{mn}\ell_m(x)\ell_n(y) \ \text{ for }(x,y)\in K_{ij},
\end{equation} 
where the coefficients are 
\begin{equation}\label{legendre:coef}
\alpha^{ij}_{mn} :=\frac{1}{\norm{\ell_m}^2_{I_i}\norm{\ell_n}^2_{J_j}}
\int_{K_{ij}}z(x,y)\ell_m(x)\ell_n(y)\,\mathrm{d}x\,\mathrm{d}y
\ \text{ for }m,n=0,1,\dots
\end{equation}
Now the definition of~$\Pi$ implies that
\[
\Pi z(x,y)|_{K_{ij}}=\sum_{m=0}^{k}\sum_{n=0}^{k}
\alpha^{ij}_{mn}\ell_m(x)\ell_n(y).
\]
Hence 
\begin{align*}
(z-\Pi z)(x,y)|_{K_{ij}}
=\sum_{n=k+1}^{\infty}\sum_{m=0}^{k}
\alpha^{ij}_{mn}\ell_m(x)\ell_n(y)
+\sum_{n=0}^{\infty}\sum_{m=k+1}^{\infty}
\alpha^{ij}_{mn}\ell_m(x)\ell_n(y).
\end{align*}
Since $\hat{\ell}_m(\pm 1)=(\pm 1)^m$,  the average
\begin{align}
\average{z-\Pi z}_{i,y}
&=\frac12\Big[(z-\Pi z)^-_{i,y}+(z-\Pi z)^+_{i,y}\Big]
\notag\\
&\hspace{-2cm}=\frac12 \sum_{n=k+1}^{\infty}\sum_{m=0}^{k}
\big[\alpha^{ij}_{mn}+\alpha^{i+1,j}_{mn}(-1)^m\big]\ell_n(y)
+\frac12 \sum_{n=0}^{\infty}
\big[\alpha^{ij}_{k+1,n}+\alpha^{i+1,j}_{k+1,n}(-1)^{k+1}\big]\ell_n(y)
\notag\\
&\quad
+\frac12 \sum_{n=0}^{\infty}\sum_{m=k+2}^{\infty}
\big[\alpha^{ij}_{mn}+\alpha^{i+1,j}_{mn}(-1)^m\big]\ell_n(y)
\notag\\
&
:=A_1(y)+A_2(y)+A_3(y).  \label{A1A2A3}
\end{align}
The pairwise orthogonality property $\int_{J_j} \ell_n(y) \ell_r(y)\,dy = 0$ for $n\ne r$ implies that 
\begin{equation}\label{A1}
\int_{J_j} A_1(y) \jump{v}_{i,y}\,\mathrm{d}y=0 
\ \text{ for  each }\ \jump{v}_{i,y}\in \mathcal{P}^k(J_j).
\end{equation}

We turn next to $\int_{J_j} A_2(y) \jump{v}_{i,y}\,\,\mathrm{d}y$.
Let $\hat{K}=(-1,1)^2$ be  the reference element and set
$x_{i-1/2}=(x_i+x_{i-1})/2$, $y_{j-1/2}=(y_j+y_{j-1})/2$.
Consider the affine transformation
\[
z(x,y)|_{K_{ij}}=z\left(x_{i-1/2}+\frac12 h_i s,y_{j-1/2}+\frac12 h_j t\right)
	:=\hat{z}(s,t)|_{\hat{K}}.
\]
By \eqref{legendre:coef}, \eqref{lengdre}, a scaling argument,
integration by parts, Cauchy-Schwarz's inequality and \eqref{Legd:norm},
one has
\begin{align}
\left|\alpha^{ij}_{k+1,n}\right|
&=\left|
\frac{1}{\norm{\ell_{k+1}}^2_{I_i}\norm{\ell_n}^2_{J_j}}
\int_{K_{ij}}z(x,y)\ell_{k+1}(x)\ell_n(y)\,\mathrm{d}x\,\mathrm{d}y
\right|
\nonumber\\
&= 
\frac{1}{\norm{\hat{\ell}_{k+1}}^2_{\hat{I}}\norm{\hat{\ell}_n}^2_{\hat{I}}}
\cdot\frac1{2^{k+1}(k+1)!} 
\left|\int_{\hat{K}}\hat{z}(s,t)\partial_s^{k+1} (s^2-1)^{k+1}
\hat{\ell}_n(t)
\,\mathrm{d}s\,\mathrm{d}t\right|
\nonumber
\\
\label{IBP}
&= 
\frac{2k+3}{2^{k+2}(k+1)!} 
\cdot\frac{1}{\norm{\hat{\ell}_n}^2_{\hat{I}}} 
\left|\int_{\hat{K}}\partial_s^{k+1}\hat{z}(s,t) (s^2-1)^{k+1}
\hat{\ell}_n(t)
\,\mathrm{d}s\,\mathrm{d}t\right|
\\
&\leq \frac{1}{\norm{\hat{\ell}_n}_{\hat{I}}}
\left[\int_{\hat{K}}\left(\partial_s^{k+1}\hat{z}\right)^2\,\mathrm{d}s\,\mathrm{d}t\right]^{1/2}
\leq C\sqrt{2n+1}
\left[\frac{1}{|K_{ij}|}
\int_{K_{ij}}\left(h_i^{k+1}\partial_x^{k+1}z\right)^2\,\mathrm{d}x\,\mathrm{d}y
\right]^{1/2}
\nonumber\\
&
\leq C\sqrt{2n+1}
\sqrt{\frac{h_i}{h_j}}\, h_i^k\norm{\partial_x^{k+1}z}_{K_{ij}}. 
\nonumber
\end{align}
Here and in what follows $C>0$ is independent of $n,k,h_i,h_j$ and $z$.
Similarly,
\[
|\alpha^{i+1,j}_{k+1,n}|\leq C
\sqrt{2n+1}
\sqrt{\frac{h_{i+1}}{h_j}}\,h_{i+1}^k\norm{\partial_x^{k+1}z}_{K_{i+1,j}}.
\]
Our argument now depends on the parity of the nonnegative integer~$k$.
If $k$ is odd, then $\alpha^{ij}_{k+1,n}+\alpha^{i+1,j}_{k+1,n}(-1)^{k+1}
=\alpha^{ij}_{k+1,n}+\alpha^{i+1,j}_{k+1,n}$.
Thus, since $h_i=h_{i+1}$, the pairwise orthogonality of the rescaled Legendre polynomials, \eqref{Legd:norm},
Cauchy-Schwarz and inverse inequalities yield 
\begin{align}
\left|\int_{J_j} A_2(y) \jump{\vphu}_{i,y}\,\mathrm{d}y\right|
&=\left|\frac12\int_{J_j} \sum_{n=0}^{k} 
\big(\alpha^{ij}_{k+1,n}+\alpha^{i+1,j}_{k+1,n}\big)\ell_n(y)
\jump{\vphu}_{i,y}\,\mathrm{d}y\right|
\nonumber\\
&\leq C\left(\int_{J_j}\sum_{n=0}^{k} 
\big[|\alpha^{ij}_{k+1,n}|^2+|\alpha^{i+1,j}_{k+1,n}|^2\big]\ell^2_n(y)
\,\mathrm{d}y\right)^{1/2}
\left(\int_{J_j}\jump{\vphu}^2_{i,y}\,\mathrm{d}y\right)^{1/2}
\nonumber\\
&
\leq C\max_{0\leq n\leq k}\Big(|\alpha^{ij}_{k+1,n}|
+|\alpha^{i+1,j}_{k+1,n}|\Big)\norm{\ell_n}_{J_j}
h_i^{-1/2}\norm{\vphu}_{K_{ij}\cup K_{i+1,j}}
\nonumber\\
&
\leq C\sqrt{2n+1}\sqrt{\frac{h_i}{h_j}}\,h_i^k\norm{\partial_x^{k+1}z}_{K_{ij}\cup K_{i+1,j}}\sqrt{\frac{h_j}{2n+1}}h_i^{-1/2}\norm{\vphu}_{K_{ij}\cup K_{i+1,j}}
\nonumber\\
\label{A2:odd}
&
\leq Ch_i^k\norm{\partial_x^{k+1}z}_{K_{ij}\cup K_{i+1,j}}\norm{\vphu}_{K_{ij}\cup K_{i+1,j}}
\ \text{ for all } \vphu\in \spc.
\end{align}
On the other hand, if $k$ is even, then $\alpha^{ij}_{k+1,n}+\alpha^{i+1,j}_{k+1,n}(-1)^{k+1}
=\alpha^{ij}_{k+1,n}-\alpha^{i+1,j}_{k+1,n}$
and we can get a better bound on $\int_{J_j} A_2(y) \jump{\vphu}_{i,y}\,\mathrm{d}y$.
Since $h_i=h_{i+1}$, from \eqref{IBP} and some manipulations we obtain 
\begin{align*}
&\left|\alpha^{i+1,j}_{k+1,n}-\alpha^{ij}_{k+1,n}\right|
\\
&=\frac{2k+3}{2^{k+2}(k+1)!} 
\cdot\frac{1}{\norm{\hat{\ell}_n}^2_{\hat{I}}}
\left|
\int_{\hat{K}}
\partial_s^{k+1} \left[
z\Big(x_{i+1/2}+\frac{1}{2}h_{i+1}s,y_{j-1/2}+\frac{1}{2}h_j t \Big)  \right. \right.\\
&\qquad \left.  \left. -z\Big(x_{i-1/2}+\frac{1}{2}h_{i}s,y_{j-1/2}+\frac{1}{2}h_jt \Big)
\right] (s^2-1)^{k+1} \hat{\ell}_n(t)\,\mathrm{d}s\,\mathrm{d}t\right|
\\
&=\frac{2k+3}{2^{k+2}(k+1)!} 
\cdot\frac{1}{\norm{\hat{\ell}_n}^2_{\hat{I}}}
\Big(\frac12h_i\Big)^{k+1}
\left|
\int_{\hat{K}}\left[\int_{x_{i-1/2}}^{x_{i+1/2}}
\partial_x^{k+2}z\left(x+\frac{1}{2}h_is,y_{j-1/2}+\frac{1}{2}h_j t\right)\mathrm{d}x\right] \right.  \\
&\hspace{1cm}\left. (s^2-1)^{k+1}\hat{\ell}_n(t)\mathrm{d}s\,\mathrm{d}t
\right|
\\
&\leq  Ch_i^{k+1}
\frac{1}{\norm{\hat{\ell}_n}^2_{\hat{I}}}
\int_{-1}^1\int_{x_{i-1}}^{x_{i+1}}
\left|\partial_x^{k+2}z\left(x,y_{j-1/2}+\frac{1}{2}h_j t\right)\right|
|\hat{\ell}_n(t)|
\,\mathrm{d}x\,\mathrm{d}t
\\
&\leq  Ch_i^{k+1}
\frac{1}{\norm{\hat{\ell}_n}^2_{\hat{I}}}
\left[
\int_{-1}^1\int_{x_{i-1}}^{x_{i+1}}
\left|\partial_x^{k+2}z\left(x,y_{j-1/2}+\frac{1}{2}h_j t\right)\right|^2
\,\mathrm{d}x\,\mathrm{d}t\right]^{1/2}
\cdot\left(h_i^{1/2}\norm{\hat{\ell}_n}_{\hat{I}}\right)
\\
&= C\sqrt{2n+1}\cdot h_i^{k+3/2}h_j^{-1/2}
\left[
\int_{y_{j-1}}^{y_j}\int_{x_{i-1}}^{x_{i+1}}
\left|\partial_x^{k+2}z\left(x,y_{j-1/2}+\frac{1}{2}h_j t\right)\right|^2
\,\mathrm{d}x\,\mathrm{d}y\right]^{1/2}
\\
&=C\sqrt{2n+1}
\sqrt{\frac{h_i}{h_j}}\,h_i^{k+1}\norm{\partial_x^{k+2}z}_{K_{ij}\cup K_{i+1,j}}.
\end{align*} 
Hence Cauchy-Schwarz and inverse inequalities and \eqref{Legd:norm} yield
\begin{align}
\left|\int_{J_j} A_2(y) \jump{\vphu}_{i,y}\,\mathrm{d}y\right|
&=\left|\frac12\int_{J_j} \sum_{n=0}^{k} 
\big(\alpha^{ij}_{k+1,n}-\alpha^{i+1,j}_{k+1,n}\big)\ell_n(y)
\jump{\vphu}_{i,y}\,\mathrm{d}y\right|
\nonumber\\
&\leq C\left(\int_{J_j}\sum_{n=0}^{k} 
|\alpha^{ij}_{k+1,n}-\alpha^{i+1,j}_{k+1,n}|^2\ell^2_n(y)
\,\mathrm{d}y\right)^{1/2}
\left(\int_{J_j}\jump{\vphu}^2_{i,y}\,\mathrm{d}y\right)^{1/2}
\nonumber\\
&
\leq C\max_{0\leq n\leq k}\Big(|\alpha^{ij}_{k+1,n}
-\alpha^{i+1,j}_{k+1,n}|\Big)\norm{\ell_n}_{J_j}h_i^{-1/2}\norm{\vphu}_{K_{ij}\cup K_{i+1,j}}
\nonumber\\
&
\leq C\sqrt{2n+1}\sqrt{\frac{h_i}{h_j}}h_i^{k+1}\norm{\partial_x^{k+2}z}_{K_{ij}\cup K_{i+1,j}}
\sqrt{\frac{h_j}{2n+1}}
h_i^{-1/2}\norm{\vphu}_{K_{ij}\cup K_{i+1,j}}
\nonumber\\
\label{A2:even}
&
\leq Ch_i^{k+1}\norm{\partial_x^{k+2}z}_{K_{ij}\cup K_{i+1,j}}
\norm{\vphu}_{K_{ij}\cup K_{i+1,j}}
\end{align}
for any $\vphu\in \spc$.
From \eqref{A2:odd} and \eqref{A2:even},
for all nonnegative integers~$k$ one has
\begin{align}\label{A2}
\left|\int_{J_j} A_2(y) \jump{\vphu}_{i,y}\,\mathrm{d}y\right|
\leq Ch_i^{\hat{k}}\norm{\partial_x^{\hat{k}+1}z}_{K_{ij}\cup K_{i+1,j}}
\norm{\vphu}_{K_{ij}\cup K_{i+1,j}}.
\end{align}

It remains to bound $\int_{J_j} A_3(y) \jump{\vphu}_{i,y}\,\mathrm{d}y$. 
Substituting \eqref{expan:coef} into the definition of $\alpha_{x;n}^{ij}(x)$ 
then appealing to uniform convergence to interchange integration and summation,
the pairwise orthogonality of the rescaled Legendre polynomials yields
\begin{align*}
\alpha_{x;n}^{ij}(x) &= \frac{1}{\norm{\ell_n}^2_{J_j}} \int_{J_{j}}
	\left(\sum_{m=0}^{\infty}\sum_{r=0}^{\infty}\alpha^{ij}_{mr}\ell_m(x)\ell_r(y)\right)\ell_n(y)\,\mathrm{d}y \\
&=\sum_{m=0}^{\infty} \alpha^{ij}_{mn}\ell_m(x)\ \text{ for } n=0,1,2,\dots
\end{align*}
Thus, writing $\pi^{k+1}$ for the $L^2$-projector into $\mathcal{P}^{k+1}(I_i)$,
one has $\pi^{k+1} \alpha_{x;n}^{ij}(x) = \sum_{m=0}^{k+1} \alpha^{ij}_{mn}\ell_m(x)$
and the approximation error bound
\cite[p.32, Lemma 1.59]{Pietro2012}
\[
\norm{\alpha_{x;n}^{ij} - \pi^{k+1} \alpha_{x;n}^{ij}}_{L^{2}(\partial I_i)}\leq
Ch_i^{k+3/2}\norm{\partial_x^{k+2} \alpha_{x;n}^{ij}}_{I_i},
\]
where $\norm{z}_{L^{2}(\partial I_i)} := [z^2(x_i^{-})+z^2(x_{i-1}^{+})]^{1/2}$.
Using this inequality, $\ell_m(x_i)=\hat{\ell}_m(1)=1$, the definition of $\alpha_{x;m}^{ij}(x)$,
a Cauchy-Schwarz inequality and \eqref{Legd:norm}, we obtain
\begin{align}
\left|\sum_{m=k+2}^{\infty} \alpha^{ij}_{mn}\right|
&= \left|\sum_{m=k+2}^{\infty} \alpha^{ij}_{mn}\ell_m(x_i)\right| 
= \left|(\alpha_{x;n}^{ij} - \pi^{k+1} \alpha_{x;n}^{ij})(x_i)\right| \notag\\
&
\leq \norm{\alpha_{x;n}^{ij} - \pi^{k+1} \alpha_{x;n}^{ij}}_{L^{2}(\partial I_i)} 
\leq Ch_i^{k+3/2}\norm{\partial_x^{k+2}\alpha_{x;n}^{ij} }_{I_i} 
\notag\\
&
=Ch_i^{k+3/2}\left(\int_{I_i}[\partial_x^{k+2}\alpha_{x;n}^{ij}(x)]^2 \,\mathrm{d}x\right)^{1/2}
\nonumber\\
&=Ch_i^{k+3/2}\left(\int_{I_i}\left[
	\frac{1}{\norm{\ell_n}^2_{J_j}}\int_{J_{j}}\partial_x^{k+2}z(x,y)\ell_n(y)\,\mathrm{d}y
	\right]^2\,\mathrm{d}x\right)^{1/2}\nonumber\\ \label{sum:1}
&\leq Ch_i^{k+3/2}  \left(\int_{I_i}\frac{1}{\norm{\ell_n}^2_{J_j}}
	\int_{J_{j}}[\partial_x^{k+2}z]^2\,\mathrm{d}y\,\mathrm{d}x\right)^{1/2}
\notag\\
&
= C\sqrt{2n+1}
\sqrt{\frac{h_i}{h_j}}h_i^{k+1}\norm{\partial_x^{k+2}z}_{K_{ij}}.
\end{align}
Similarly,
\begin{align}\label{sum:2}
\left|\sum_{m=k+2}^{\infty} \alpha^{i+1,j}_{mn}(-1)^m\right|
&\leq C\sqrt{2n+1}
\sqrt{\frac{h_{i+1}}{h_j}}\,h_{i+1}^{k+1}\norm{\partial_x^{k+2}z}_{K_{i+1,j}}  
\notag \\
&= C\sqrt{2n+1}
\sqrt{\frac{h_{i}}{h_j}}\,h_{i}^{k+1}\norm{\partial_x^{k+2}z}_{K_{i+1,j}}. 
\end{align}
Now the pairwise orthogonality of the rescaled Legendre polynomials,
\eqref{Legd:norm} 
and Cauchy-Schwarz and inverse inequalities yield
\begin{align}
&\left|\int_{J_j} A_3(y) \jump{\vphu}_{i,y}\,\mathrm{d}y\right|
\nonumber\\
&=\frac12\left|\int_{J_j}
\sum_{n=0}^{k}\sum_{m=k+2}^{\infty}
\big[\alpha^{ij}_{mn}+\alpha^{i+1,j}_{mn}(-1)^m\big]\ell_n(y)
\jump{\vphu}_{i,y}\,\mathrm{d}y\right|
\nonumber\\
&\leq C\int_{J_j}\sum_{n=0}^{k}
\left[\left|\sum_{m=k+2}^{\infty}\alpha^{ij}_{m,n}\right|
+\left|\sum_{m=k+2}^{\infty}\alpha^{i+1,j}_{m,n}(-1)^m\right|
\right]|\ell_n(y)| \, \left|\jump{\vphu}_{i,y}\right|\,\mathrm{d}y
\nonumber\\
&\leq C\left(\int_{J_j}\sum_{n=0}^{k}
\left[\left|\sum_{m=k+2}^{\infty}\alpha^{ij}_{m,n}\right|^2
+\left|\sum_{m=k+2}^{\infty}\alpha^{i+1,j}_{m,n}(-1)^m\right|^2\right]\ell^2_n(y)
\,\mathrm{d}y\right)^{1/2}
\left(\int_{J_j}\jump{\vphu}^2_{i,y}\,\mathrm{d}y\right)^{1/2}
\nonumber\\
&\leq C\sqrt{2n+1}
\sqrt{\frac{h_i}{h_j}}h_i^{k+1}\norm{\partial_x^{k+2}z}_{K_{ij}\cup K_{i+1,j}}
\sqrt{\frac{h_j}{2n+1}}
h_i^{-1/2}\norm{\vphu}_{K_{ij}\cup K_{i+1,j}}
\nonumber\\
\label{A3}
&
\leq Ch_i^{k+1}\norm{\partial_x^{k+2}z}_{K_{ij}\cup K_{i+1,j}}
\norm{\vphu}_{K_{ij}\cup K_{i+1,j}},
\end{align}
where we used \eqref{sum:1}, \eqref{sum:2} and an inverse inequality.

Combine \eqref{A1A2A3},  \eqref{A1}, \eqref{A2} and \eqref{A3} to complete the proof.
\end{proof}

Lemma~\ref{sup:L2:element}
enables us to prove the following bounds 
for the projector $\Pi$ in the coarse-mesh domain~$\dreg$
and two extensions of~$\dreg$.

\begin{lemma}\label{lemma:sup:L2:2d}
Let $\sigma\geq \hat{k}+1$.
Assume that Lemma \ref{lemma:prop:2d} is valid when $m=k+1$.
Then there exists a constant $C>0$ such that for any $\vphu\in\spc$ one has
\begin{subequations}\label{sup:L2:2d}
\begin{align}
\label{sup:L2:2d:u:x}
\left|\sum_{j=N/4+1}^{3N/4}\sum_{i=N/4+1}^{3N/4-1}\dual{\average{u-\Pi u}_{i,y}}{\jump{\vphu}_{i,y}}_{J_j}
\right|&\leq CN^{-\hat{k}}\norm{\vphu},
\\
\label{sup:L2:2d:u:y}
\left|\sum_{j=N/4+1}^{3N/4}\sum_{i=N/4+1}^{3N/4-1}\dual{\average{u-\Pi u}_{x,j}}{\jump{\vphu}_{x,j}}_{I_i}
\right|&\leq CN^{-\hat{k}}\norm{\vphu},
\\
\label{sup:L2:2d:p}
\left|\sum_{j=1}^N\sum_{i=N/4+1}^{3N/4-1}\dual{\average{p-\Pi p}_{i,y}}{\jump{\vphu}_{i,y}}_{J_j}
\right|&\leq C\sq^{1/2}N^{-\hat{k}}\norm{\vphu},
\\
\label{sup:L2:2d:q}
\left|\sum_{i=1}^N\sum_{j=N/4+1}^{3N/4-1}\dual{\average{q-\Pi q}_{x,j}}{\jump{\vphu}_{x,j}}_{I_i}
\right|&\leq C\sq^{1/2}N^{-\hat{k}}\norm{\vphu}.
\end{align}
\end{subequations}
\end{lemma}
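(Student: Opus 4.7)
The plan is as follows. In every one of the four sums, the indices $i,i+1$ (or $j,j+1$) lie in the coarse-mesh range $\{N/4+1,\dots,3N/4\}$, so $h_i=h_{i+1}=H=O(N^{-1})$ throughout, and the equal-mesh hypothesis of Lemma~\ref{sup:L2:element} is satisfied on every edge that appears. I will therefore apply Lemma~\ref{sup:L2:element} edge by edge and assemble the pieces by Cauchy-Schwarz over the set of edges. Throughout, I use the decomposition of $u$ from Lemma~\ref{lemma:prop:2d} as $u=\ureg+\uep$, together with $p=\sq u_x=\sq\ureg_x+\sq(\uep)_x$ and similarly for $q$, treating the smooth and layer components separately.

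For \eqref{sup:L2:2d:u:x} and \eqref{sup:L2:2d:u:y}: the smooth part $\ureg$ has derivatives bounded uniformly in $\sq$ by \eqref{reg:smooth}, so Lemma~\ref{sup:L2:element} combined with Cauchy-Schwarz over the $O(N^2)$ edges gives an $\ureg$-contribution of $C[H^{\hat k}+H^{k+1}]\norm{\vphu}\le CN^{-\hat k}\norm{\vphu}$. For the layer component $\uep$ on the coarse region $\dreg$ I exploit the exponential smallness: derivative bounds analogous to \eqref{reg:boundary}--\eqref{reg:corner} give $\norm{\uep}_{L^{\infty}(\dreg)}\le Ce^{-\beta\tau/\ssq}=CN^{-\sigma}$, and the $L^{\infty}$-stability \eqref{L0:stb} of $\Pi$ then yields $\norm{\uep-\Pi \uep}_{L^{\infty}(K_{ij}\cup K_{i+1,j})}\le CN^{-\sigma}$. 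On each edge this bound with a Cauchy-Schwarz/inverse trace estimate controls the integral by $CN^{-\sigma}\norm{\vphu}_{K_{ij}\cup K_{i+1,j}}$. Summing over the $O(N^2)$ interior coarse-mesh edges via Cauchy-Schwarz produces a total bound $CN^{-\sigma+1}\norm{\vphu}$, and the standing assumption $\sigma\ge\hat k+1$ absorbs the extra factor of $N$ to deliver $CN^{-\hat k}\norm{\vphu}$.

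For \eqref{sup:L2:2d:p} and \eqref{sup:L2:2d:q}: the argument is structurally identical but applied to $p$ and $q$, and the additional prefactor $\sq$ (from $p=\sq u_x$) must be tracked. For the smooth part, Lemma~\ref{sup:L2:element} now produces $C\sq H^{\hat k}\norm{\vphu}\le C\sq^{1/2}N^{-\hat k}\norm{\vphu}$ (using $\sq\le 1$). For the layer part, one checks from the pointwise bounds that $\norm{\sq\partial_x\uep}_{L^{\infty}(\dxb)}\le C\sq^{1/2}N^{-\sigma}$---the factor $\sq$ from $p$ is only partially cancelled by the $\sq^{-1/2}$ arising from one $x$-derivative of a layer, leaving $\sq^{1/2}$. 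The same $L^{\infty}$-stability argument then delivers $C\sq^{1/2}N^{-\sigma+1}\norm{\vphu}\le C\sq^{1/2}N^{-\hat k}\norm{\vphu}$. Note that the sum in \eqref{sup:L2:2d:p} covers all $j$ (not just the coarse range), but the layer components without an exponential $x$-factor, such as $\sq\partial_x u_3^{\mathrm b}$, are of order $\sq$ and produce the $\sq^{1/2}$ bound directly; components with an $x$-layer still satisfy $x\ge\tau$ on the region in question and thus remain exponentially small.

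The main obstacle is the bookkeeping for the layer contributions: the pointwise derivative bounds \eqref{reg:boundary}--\eqref{reg:corner} degenerate like $\sq^{-m/2}$, so a naive application of Lemma~\ref{sup:L2:element} would introduce negative powers of $\sq$. The key observation, on which the whole estimate rests, is that Lemma~\ref{sup:L2:element} need not be invoked for the layer part at all: one simply uses the $L^{\infty}$-stability of $\Pi$ together with the exponential smallness $e^{-\beta\tau/\ssq}=N^{-\sigma}$ of the layer in the coarse region, and the mild assumption $\sigma\ge\hat k+1$ precisely absorbs the one factor of $N$ lost when summing over edges.
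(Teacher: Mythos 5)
Your treatment of \eqref{sup:L2:2d:u:x} and \eqref{sup:L2:2d:u:y} coincides with the paper's own argument: Lemma~\ref{sup:L2:element} plus Cauchy--Schwarz for the smooth component, and $L^{\infty}$-stability \eqref{L0:stb} combined with the exponential smallness $\norm{\uep}_{L^{\infty}(\dreg)}\le CN^{-\sigma}$ for the layer component, with $\sigma\ge\hat k+1$ absorbing the factor $N$ incurred by summing over the $O(N^2)$ edges. That part is sound.

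The gap is in \eqref{sup:L2:2d:p} (and symmetrically \eqref{sup:L2:2d:q}), for which the paper gives no details and where your "similar" extension breaks. The sum there runs over \emph{all} $j=1,\dots,N$, i.e.\ over the strip $\dxb=(\tau,1-\tau)\times(0,1)$, which contains the horizontal layer regions. The components of $p=\sq u_x$ coming from the layers attached to $y=0$ and $y=1$, say $\sq\,\partial_x u_3^{\mathrm{b}}$ with $|\partial_x^i\partial_y^j u_3^{\mathrm{b}}|\le C\sq^{-j/2}e^{-\beta y/\ssq}$, satisfy only $|\sq\,\partial_x u_3^{\mathrm{b}}|\le C\sq e^{-\beta y/\ssq}$, which near $y=0$ is $O(\sq)$ and not $O(\sq^{1/2}N^{-\sigma})$; so your asserted bound $\norm{\sq\partial_x\uep}_{L^{\infty}(\dxb)}\le C\sq^{1/2}N^{-\sigma}$ is false. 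Your fallback --- that an $O(\sq)$ component "produces the $\sq^{1/2}$ bound directly" --- does not close either: the stability-plus-trace route costs $h_j^{1/2}$ on each edge and $H^{-1/2}$ from the inverse inequality applied to $\jump{\vphu}_{i,y}$, and Cauchy--Schwarz over the $O(N^2)$ edges then yields at best $C\sq N\norm{\vphu}$ (or $C\sq^{5/4}N\norm{\vphu}$ if the $y$-decay is exploited), and neither is $\le C\sq^{1/2}N^{-\hat k}\norm{\vphu}$ under the standing assumption \eqref{mild:condition:epsilon}, which only guarantees $\ssq\ln N\le C$. The cure is the opposite of the "key observation" you state at the end: for precisely these components the superapproximation Lemma~\ref{sup:L2:element} \emph{must} be invoked, and it is harmless to do so because $x$-derivatives of a $y$-layer carry no negative powers of $\sq$ --- e.g.\ $\norm{\partial_x^{\hat k+1}(\sq\,\partial_x u_3^{\mathrm{b}})}_{\dxb}\le C\sq\norm{e^{-\beta y/\ssq}}\le C\sq^{5/4}$, giving a contribution $CH^{\hat k}\sq^{5/4}\norm{\vphu}\le C\sq^{1/2}N^{-\hat k}\norm{\vphu}$. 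The $L^{\infty}$-smallness argument should be reserved for the components that genuinely decay in $x$ on $\dxb$ (the layers at $x=0,1$ and the corner layers). You should rework the layer bookkeeping in \eqref{sup:L2:2d:p}--\eqref{sup:L2:2d:q} along these lines, and also check that the derivative orders demanded by Lemma~\ref{sup:L2:element} for $p=\sq u_x$ are covered by the regularity hypothesis $m=k+1$.
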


\begin{proof}
All four inequalities are proved similarly so we present a detailed proof 
only for~\eqref{sup:L2:2d:u:x}.
For the smooth component $\bar{u}$ of~$u$, Lemma~\ref{sup:L2:element}, 
a Cauchy-Schwarz inequality and Lemma~\ref{lemma:prop:2d} give us 
\begin{align}
&\left|\sum_{j=N/4+1}^{3N/4}\sum_{i=N/4+1}^{3N/4-1}
\dual{\average{\bar{u}-\Pi \bar{u}}_{i,y}}{\jump{\vphu}_{i,y}}_{J_j}
\right|\nonumber\\
&\quad\leq 
C\sum_{j=N/4+1}^{3N/4}\sum_{i=N/4+1}^{3N/4-1} 
\left[
H^{\hat{k}}\norm{\partial_x^{\hat{k}+1} \bar{u}}_{K_{ij}\cup K_{i+1,j}}
+H^{k+1}\norm{\partial_x^{k+2}\bar{u}}_{K_{ij}\cup K_{i+1,j}}
\right]	
\norm{\vphu}_{K_{ij}\cup K_{i+1,j}}
\nonumber\\
\label{avg:regular}
&\quad\leq CH^{\hat{k}}\Big(\norm{\partial_x^{\hat{k}+1} \bar{u}}_{\dreg}
+\norm{\partial_x^{k+2}\bar{u}}_{\dreg}
\Big)\norm{\vphu}_{\dreg}
\leq CN^{-\hat{k}}\norm{\vphu}_{\dreg}.
\end{align}

For the layer component $u_{\sq}$ of Lemma~\ref{lemma:prop:2d}, we use a Cauchy-Schwarz inequality,
the $L^{\infty}$-stability property~\eqref{L0:stb}, 
an inverse inequality,  and $\sigma\geq \hat{k}+1$ to get 
\begin{align}
&\left|\sum_{j=N/4+1}^{3N/4}\sum_{i=N/4+1}^{3N/4-1}
\dual{\average{u_{\sq}-\Pi u_{\sq}}_{i,y}}{\jump{\vphu}_{i,y}}_{J_j}
\right|
\nonumber\\
&\quad\leq 
\sum_{j=N/4+1}^{3N/4} \sum_{i=N/4+1}^{3N/4-1}
\norm{\average{u_{\sq}-\Pi u_{\sq}}_{i,y}}_{J_j}
\norm{\jump{\vphu}_{i,y}}_{J_j}
\nonumber\\
&\quad\leq 
C\sum_{j=N/4+1}^{3N/4}\sum_{i=N/4+1}^{3N/4-1}
H^{1/2}\norm{u_{\sq}-\Pi u_{\sq}}_{L^{\infty}(K_{ij}\cup K_{i+1,j})}
H^{-1/2}\norm{\vphu}_{K_{ij}\cup K_{i+1,j}}
\nonumber\\
&\quad\leq 
C\sum_{j=N/4+1}^{3N/4}\sum_{i=N/4+1}^{3N/4-1}
\norm{u_{\sq}}_{L^{\infty}(K_{ij}\cup K_{i+1,j})}
\norm{\vphu}_{K_{ij}\cup K_{i+1,j}}
\nonumber\\
\label{avg:layer}
&\quad\leq 
C\sum_{j=N/4+1}^{3N/4}\sum_{i=N/4+1}^{3N/4-1}
N^{-\sigma}\norm{\vphu}_{K_{ij}\cup K_{i+1,j}}
\leq CN^{-\sigma+1}\norm{\vphu}_{\dreg}
\leq CN^{-\hat{k}}\norm{\vphu}_{\dreg}.
\end{align}

Then \eqref{sup:L2:2d:u:x} follows from \eqref{avg:regular}, \eqref{avg:layer} and a triangle inequality.
\end{proof}

\subsection{Superapproximation properties of edge-element projectors}
\label{sec:superapprox_edge_element}

We now turn our attention to the edge-element approximation operators
$\Pi_x^{+}, \Pi_x^{-}, \Pi_y^{+}$ and $\Pi_y^{-}$.

\begin{lemma}\label{sup:edge:element}
Let $K_{ij}, K_{i+1,j}\in\Omega_N$ with $h_i=h_{i+1}$.
Let $z\in C^{k+1}(\bar  K_{ij}\cup \bar K_{i+1,j})$.
Then there exists a constant $C>0$ such that for all $\vphu\in\spc$ one has
\begin{align*} 
\left|\int_{J_j}\average{z-\Pi_y^{\pm} z}_{i,y}\jump{\vphu}_{i,y} \,\mathrm{d}y\right|
	&\leq Ch_i^{-1}\left[h_i^{k+1}\norm{\partial_x^{k+1}z}_{K_{ij}\cup K_{i+1,j}} \right.  \\
	&\qquad \left.\qquad+h_j^{k+1}\norm{\partial_y^{k+1}z}_{K_{ij}\cup K_{i+1,j}}
	\right]
	\norm{\vphu}_{K_{ij}\cup K_{i+1,j}}.
\end{align*}
Analogously,  if  $z\in C^{k+1}(\bar K_{ij}\cup \bar K_{i,j+1})$ with $h_j=h_{j+1}$, then
\begin{align*} 
	\left|\int_{I_i}\average{z-\Pi_x^{\pm} z}_{x,j}\jump{\vphu}_{x,j} \,\mathrm{d}x\right|
	&\leq Ch_j^{-1}\left[h_i^{k+1}\norm{\partial_x^{k+1}z}_{K_{ij}\cup K_{i,j+1}} \right. \\
	&\qquad \left. \qquad+h_j^{k+1}\norm{\partial_y^{k+1}z}_{K_{ij}\cup K_{i,j+1}}
	\right]
	\norm{\vphu}_{K_{ij}\cup K_{i,j+1}}.
\end{align*}
\end{lemma}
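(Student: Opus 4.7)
The plan is to apply the Cauchy--Schwarz inequality to split the integral into a product of $L^2(J_j)$ norms. Since $\vphu$ is piecewise polynomial, a standard inverse trace inequality gives $\norm{\jump{\vphu}_{i,y}}_{J_j}\leq Ch_i^{-1/2}\norm{\vphu}_{K_{ij}\cup K_{i+1,j}}$, so it suffices to prove the trace-type estimate
\[
\norm{\average{z-\Pi_y^{\pm}z}_{i,y}}_{J_j}
\leq Ch_i^{-1/2}\left[h_i^{k+1}\norm{\partial_x^{k+1}z}_{K_{ij}\cup K_{i+1,j}}+h_j^{k+1}\norm{\partial_y^{k+1}z}_{K_{ij}\cup K_{i+1,j}}\right].
\]
I would establish this by bounding the one-sided trace on $K_{ij}$; the analogous bound on $K_{i+1,j}$ is identical, and the hypothesis $h_i=h_{i+1}$ lets the two combine to bound the average.

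The key idea is to exploit the tensor-product structure. Because $\pi_x$ and $\pi_y^{\pm}$ act on disjoint variables they commute, so I would use the splitting
\[
z-\Pi_y^{\pm}z=(z-\pi_x z)+(I-\pi_y^{\pm})(\pi_x z),
\]
whose two summands, evaluated at $x=x_i^-$, are 1D functions of $y$ that I would estimate separately. For $(z-\pi_x z)(x_i^-,\cdot)$ I would apply a continuous 1D trace inequality in $x$ pointwise in $y$, integrate over $J_j$, and use the classical 1D $L^2$-projection bounds $\norm{z-\pi_x z}_{K_{ij}}\leq Ch_i^{k+1}\norm{\partial_x^{k+1}z}_{K_{ij}}$ and $\norm{\partial_x(z-\pi_x z)}_{K_{ij}}\leq Ch_i^{k}\norm{\partial_x^{k+1}z}_{K_{ij}}$; this yields a contribution of order $h_i^{k+1/2}\norm{\partial_x^{k+1}z}_{K_{ij}}$, matching the first term of the target bound.

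The main obstacle is the second piece. Setting $g(y):=(\pi_x z)(x_i^-,y)$, the 1D Gauss--Radau approximation estimate gives $\norm{(I-\pi_y^{\pm})g}_{J_j}\leq Ch_j^{k+1}\norm{\partial_y^{k+1}g}_{J_j}$, but one then has to control the trace at $x_i^-$ of a high derivative of $\pi_x z$. Since $\pi_x$ commutes with $\partial_y^{k+1}$ (acting on disjoint variables), $\partial_y^{k+1}g(y)=\pi_x(\partial_y^{k+1}z)(x_i^-,y)$, and because $\pi_x(\partial_y^{k+1}z)$ is a polynomial of degree at most $k$ in $x$, a discrete inverse trace inequality combined with the $L^2$-stability of $\pi_x$ yields $\norm{\partial_y^{k+1}g}_{J_j}\leq Ch_i^{-1/2}\norm{\partial_y^{k+1}z}_{K_{ij}}$. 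This contributes $h_j^{k+1}h_i^{-1/2}\norm{\partial_y^{k+1}z}_{K_{ij}}$, and assembling the two pieces together with the analogous estimate on $K_{i+1,j}$ and the inverse-trace bound for $\jump{\vphu}_{i,y}$ produces the claimed inequality. The second inequality of the lemma then follows by symmetry, interchanging the roles of $x$ and $y$.
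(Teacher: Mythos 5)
Your argument is correct, but it follows a genuinely different route from the paper. The paper proves this lemma by expanding $z$ in a bivariate rescaled Legendre series on each element, computing the coefficients of $\Pi_y^{-}z$ explicitly, and splitting the trace average into three pieces $B_1+B_2+B_3$: the piece $B_1$ is killed by orthogonality against $\jump{\vphu}_{i,y}\in\mathcal{P}^k(J_j)$, while $B_2$ and $B_3$ are estimated through tail-sum bounds on the Legendre coefficients (the analogues of \eqref{sum:1} and \eqref{sum:y}). You instead use the commuting tensor-product splitting $z-\Pi_y^{\pm}z=(z-\pi_x z)+(I-\pi_y^{\pm})(\pi_x z)$, bound each one-sided trace by a scaled continuous trace inequality (for the first piece) and by the 1D Gauss--Radau approximation estimate combined with a discrete trace inequality and $L^2$-stability of $\pi_x$ (for the second), and then apply a crude Cauchy--Schwarz against $\jump{\vphu}_{i,y}$; the bookkeeping $h_i^{k+1/2}\cdot h_i^{-1/2}=h_i^{k}$ and $h_i^{-1/2}h_j^{k+1}\cdot h_i^{-1/2}=h_i^{-1}h_j^{k+1}$ reproduces exactly the stated bound, and the hypothesis $h_i=h_{i+1}$ enters only to unify the mesh widths, just as in the paper. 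Your approach is shorter and more modular, relying only on standard 1D projection and trace estimates; what it gives up is the orthogonality cancellation of the high-$y$-frequency part, which is harmless here because plain Cauchy--Schwarz already yields the claimed order, but which is essential in the companion Lemma~\ref{sup:L2:element} (where the $A_2$ parity argument produces the improved rate $\hat k$ for even $k$) --- this is presumably why the paper reuses the Legendre machinery rather than arguing as you do.
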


\begin{proof}
We present the proof of the first inequality for the projector $\Pi_y^{-}$;
the remaining cases are handled in a similar way.
Recall the Legendre expansion \eqref{expan:coef}: 
\begin{align*}
z(x,y)|_{K_{ij}}=\sum_{m=0}^{\infty}\sum_{n=0}^{\infty}
\alpha^{ij}_{mn}\ell_m(x)\ell_n(y).
\end{align*}
Similarly to \eqref{sum:1}, one can show that
\begin{subequations}\label{sum:y}
\begin{align}\label{sum:y:1}
\left|\sum_{n=k+1}^{\infty} \alpha^{ij}_{mn}\right|
&\leq C\sqrt{2m+1}
\sqrt{\frac{h_{j}}{h_i}}h_{j}^{k}\norm{\partial_y^{k+1}z}_{K_{ij}}.
\end{align} 
and
\begin{align}\label{sum:y:2}
\left|\sum_{n=k+1}^{\infty} \alpha^{i+1,j}_{mn}(-1)^m\right|
&=\left|\sum_{n=k+1}^{\infty} \alpha^{i+1,j}_{mn}\right|
\leq C\sqrt{2m+1}
\sqrt{\frac{h_{j}}{h_{i+1}}}h_{j}^{k}\norm{\partial_y^{k+1}z}_{K_{i+1,j}}.
\end{align}
\end{subequations}

One can write the Legendre expansion for the projection $\Pi_y^{-}z$ as
\[ 
\Pi_y^{-} z(x,y)|_{K_{ij}}=\sum_{m=0}^{k}\sum_{n=0}^{k}
b^{ij}_{mn}\ell_m(x)\ell_n(y)
\]
for some coefficients $b^{ij}_{mn}$.
The pairwise orthogonality of the Legendre polynomials and the first condition
of \eqref{Pi:y:-} yield
$b^{ij}_{mn}=\alpha^{ij}_{mn}$ for $m=0,1,\dots,k$ and  $n=0,1,\dots,k-1$.
Then the pairwise orthogonality of the Legendre polynomials, the second condition of \eqref{Pi:y:-} 
and $\ell_n(y_j)=1$ imply that the remaining coefficients $b^{ij}_{mk}$ satisfy
\begin{align*}
b^{ij}_{mk}&=
\frac{1}{\norm{\ell_m}^2_{I_i}}
\int_{I_{i}}\left(z^{-}_{x,j}(x) - \sum_{s=0}^{k}\sum_{n=0}^{k-1}
	\alpha^{ij}_{sn}\ell_s(x) \right)\ell_m(x)\,\mathrm{d}x
\\
&=\frac{1}{\norm{\ell_m}^2_{I_i}}
\int_{I_{i}}\Big(\sum_{s=0}^{\infty}\sum_{n=0}^{\infty}
\alpha^{ij}_{sn}\ell_s(x)\Big)\ell_m(x)\,\mathrm{d}x-\sum_{n=0}^{k-1}\alpha^{ij}_{mn}
\\
&=\sum_{n=0}^{\infty}\alpha^{ij}_{mn}
-\sum_{n=0}^{k-1}\alpha^{ij}_{mn}
=\sum_{n=k}^{\infty}\alpha^{ij}_{mn} \ \text{ for } m=0,1,\dots,k.
\end{align*}
Hence,
\begin{align*}
\Pi_y^{-} z(x,y)|_{K_{ij}}&
=\sum_{m=0}^{k}
\left[\sum_{n=0}^{k-1}
\alpha^{ij}_{mn}\ell_m(x)\ell_n(y)+
\Big(\sum_{n=k}^{\infty}\alpha^{ij}_{mn}\Big)\ell_m(x)\ell_k(y)\right],
\\
(z-\Pi_y^{-} z)(x,y)|_{K_{ij}}
&=\sum_{n=k+1}^{\infty}\sum_{m=0}^{k}
\alpha^{ij}_{mn}\ell_m(x)\ell_n(y)
+\sum_{n=0}^{\infty}\sum_{m=k+1}^{\infty}
\alpha^{ij}_{mn}\ell_m(x)\ell_n(y)
\\
&\quad 
+\sum_{m=0}^{k}\Big(\sum_{n=k+1}^{\infty}\alpha^{ij}_{mn}\Big)
\ell_m(x)\ell_k(y).
\end{align*}
Since $\hat{\ell}_m(\pm 1)=(\pm 1)^m$, one has the average
\begin{align*}
\average{z-\Pi_y^{-} z}_{i,y}
&=\frac12\Big[(z-\Pi_y^{-} z)^-_{i,y}+(z-\Pi_y^{-} z)^+_{i,y}\Big]
\\
=\frac12 \sum_{n=k+1}^{\infty}\sum_{m=0}^{k}
&\big[\alpha^{ij}_{mn}+\alpha^{i+1,j}_{mn}(-1)^m\big]\ell_n(y)
+\frac12 \sum_{n=0}^{\infty}\sum_{m=k+1}^{\infty}
\big[\alpha^{ij}_{mn}+\alpha^{i+1,j}_{mn}(-1)^m\big]\ell_n(y)
\\
&\quad
+\frac12 \sum_{m=0}^{k}
\left[\sum_{n=k+1}^{\infty}\alpha^{ij}_{mn}
+\sum_{n=k+1}^{\infty}\alpha^{i+1,j}_{mn}(-1)^{m}\right]\ell_k(y)
\\
&
:=B_1(y)+B_2(y)+B_3(y).
\end{align*}
The pairwise orthogonality of the rescaled Legendre polynomials forces
\begin{equation}\label{B1}
\int_{J_j} B_1(y) \jump{\vphu}_{i,y}\,\mathrm{d}y=0 \ 
\text{   for  any } \jump{\vphu}_{i,y}\in \mathcal{P}^k(J_j).
\end{equation}
Similarly to \eqref{A3}, since $h_i=h_{i+1}$ we have
\begin{align}
\left|\int_{J_j} B_2(y) \jump{\vphu}_{i,y}\,\mathrm{d}y\right|
&=\frac12\left|\int_{J_j}
\sum_{n=0}^{k}\sum_{m=k+1}^{\infty}
\big[\alpha^{ij}_{mn}+\alpha^{i+1,j}_{mn}(-1)^m\big]\ell_n(y)
\jump{\vphu}_{i,y}\,\mathrm{d}y\right|
\nonumber\\
\label{B2}
&
\leq Ch_i^{k}\norm{\partial_x^{k+1}z}_{K_{ij}\cup K_{i+1,j}}
\norm{\vphu}_{K_{ij}\cup K_{i+1,j}}.
\end{align}
Furthermore, imitating the derivation of~\eqref{A3}, \eqref{Legd:norm} and 
recalling the inequalities~\eqref{sum:y}, we obtain
\begin{align}
&\left|\int_{J_j} B_3(y) \jump{\vphu}_{i,y}\,\mathrm{d}y\right|
\nonumber\\
&=\frac12\left|\int_{J_j}
\sum_{m=0}^{k}
\left[\sum_{n=k+1}^{\infty}\alpha^{ij}_{mn}
+\sum_{n=k+1}^{\infty}\alpha^{i+1,j}_{mn}(-1)^{m}\right] \ell_k(y)
\jump{\vphu}_{i,y}\,\mathrm{d}y\right|
\nonumber\\
&\leq C\int_{J_j}\sum_{m=0}^{k}
\left[\left|\sum_{n=k+1}^{\infty}\alpha^{ij}_{m,n}\right|
+\left|\sum_{n=k+1}^{\infty}\alpha^{i+1,j}_{m,n}(-1)^m\right|
\right]|\ell_k(y)||\jump{\vphu}_{i,y}|\,\mathrm{d}y
\nonumber\\
&\leq C\left(\int_{J_j}\sum_{m=0}^{k}
\left[\left|\sum_{n=k+1}^{\infty}\alpha^{ij}_{m,n}\right|^2
+\left|\sum_{n=k+1}^{\infty}\alpha^{i+1,j}_{m,n}(-1)^m\right|^2\right]\ell^2_k(y)
\,\mathrm{d}y\right)^{1/2}
\left(\int_{J_j}\jump{\vphu}^2_{i,y}\,\mathrm{d}y\right)^{1/2}
\nonumber\\
&\leq
C\max_{0\leq m\leq k}\left[
\left|\sum_{n=k+1}^{\infty} \alpha^{ij}_{mn}\right|
+\left|\sum_{n=k+1}^{\infty} \alpha^{i+1,j}_{mn}(-1)^m\right|
\right]
\norm{\ell_k}_{J_j}h_i^{-1/2}\norm{\vphu}_{K_{ij}\cup K_{i+1,j}}
\nonumber\\
&\leq C\sqrt{2k+1}
\sqrt{\frac{h_j}{h_i}}h_j^{k}\norm{\partial_y^{k+1}z}_{K_{ij}\cup K_{i+1,j}}
\sqrt{\frac{h_j}{2k+1}}
h_i^{-1/2}\norm{\vphu}_{K_{ij}\cup K_{i+1,j}}
\nonumber\\
\label{B3}
&
\leq Ch_i^{-1}h_j^{k+1}\norm{\partial_y^{k+1}z}_{K_{ij}\cup K_{i+1,j}}
\norm{\vphu}_{K_{ij}\cup K_{i+1,j}}.
\end{align}
Combining \eqref{B1}, \eqref{B2} and \eqref{B3} gives
\begin{align*}
\left|\int_{J_j}\average{z-\Pi_y^{-} z}_{i,y}\jump{\vphu}_{i,y} \,\mathrm{d}y\right|
&\leq\sum_{i=1}^3\left|\int_{J_j}B_i(y)\jump{\vphu}_{i,y} \,\mathrm{d}y\right|
\\
&\hspace{-3cm}\leq C \left[h_i^{k}\norm{\partial_x^{k+1}z}_{K_{ij}\cup K_{i+1,j}}+h_i^{-1}h_j^{k+1}\norm{\partial_y^{k+1}z}_{K_{ij}\cup K_{i+1,j}}
\right]
\norm{\vphu}_{K_{ij}\cup K_{i+1,j}}.
\end{align*}
This completes the proof.
\end{proof}

Lemma~\ref{sup:edge:element} enables us to prove the following bounds for $\Pi_y^{\pm}$ 
and $\Pi_x^{\pm}$ in the  fine-mesh regions $\ds, \dn,
\dw$ and $\de$.

\begin{lemma}\label{lemma:sup:edge:2d}
Assume that Lemma \ref{lemma:prop:2d} is valid when $m=k$.
	Then there exists a constant $C>0$ such that for any $\vphu\in\spc$ one has
	\begin{subequations}\label{sup:edge:2d}
		\begin{align}
		\label{sup:Pi:2d:y:-}
		\left|\sum_{j=1}^{N/4}\sum_{i=N/4+1}^{3N/4-1}\dual{\average{u-\Pi_y^{-} u}_{i,y}}{\jump{\vphu}_{i,y}}_{J_j}
		\right|&\leq C\sq^{1/4}N^{-k}(\ln N)^{k+1}\norm{\vphu}_{\ds},
		\\
		\label{sup:Pi:2d:y:+}
		\left|\sum_{j=3N/4+1}^{N}\sum_{i=N/4+1}^{3N/4-1}\dual{\average{u-\Pi_y^{+} u}_{i,y}}{\jump{\vphu}_{i,y}}_{J_j}
		\right|&\leq C\sq^{1/4}N^{-k}(\ln N)^{k+1}\norm{\vphu}_{\dn},
		\\
		\label{sup:Pi:2d:x:-}
		\left|\sum_{i=1}^{N/4}\sum_{j=N/4+1}^{3N/4-1}\dual{\average{u-\Pi_x^{-} u}_{x,j}}{\jump{\vphu}_{x,j}}_{I_i}
		\right|&\leq C\sq^{1/4}N^{-k}(\ln N)^{k+1}\norm{\vphu}_{\dw},
		\\
		\label{sup:Pi:2d:x:+}
		\left|\sum_{i=3N/4+1}^{N}\sum_{j=N/4+1}^{3N/4-1}\dual{\average{u-\Pi_x^{+} u}_{x,j}}{\jump{\vphu}_{x,j}}_{I_i}
		\right|&\leq C\sq^{1/4}N^{-k}(\ln N)^{k+1}\norm{\vphu}_{\de}.
		\end{align}
	\end{subequations}
\end{lemma}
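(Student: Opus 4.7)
\textbf{Proof plan for Lemma~\ref{lemma:sup:edge:2d}.} We describe the approach for \eqref{sup:Pi:2d:y:-}; the other three bounds follow by the obvious symmetries (reflecting in $x$ or swapping the roles of $x$ and $y$). Throughout, for each $i\in\{N/4+1,\dots,3N/4-1\}$ we have $h_i=h_{i+1}=H$, so Lemma~\ref{sup:edge:element} applies on every pair $(K_{ij},K_{i+1,j})$, and for $j\leq N/4$ we have $h_j=h$. Hence, writing $K:=K_{ij}\cup K_{i+1,j}$,
\begin{equation*}
\left|\dual{\average{u-\Pi_y^{-}u}_{i,y}}{\jump{\vphu}_{i,y}}_{J_j}\right|
\leq CH^{-1}\bigl[H^{k+1}\norm{\partial_x^{k+1}u}_{K}+h^{k+1}\norm{\partial_y^{k+1}u}_{K}\bigr]\norm{\vphu}_{K}.
\end{equation*}
The plan is then to decompose $u=\bar u+\sum_{\ell=1}^{4} u_\ell^{\mathrm b}+\sum_{\ell=1}^{4}u_\ell^{\mathrm c}$ via Lemma~\ref{lemma:prop:2d} and bound each component separately, always finishing with a Cauchy--Schwarz argument over the double sum.

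For the smooth part $\bar u$, the uniform bound \eqref{reg:smooth} gives $\norm{\partial_x^{k+1}\bar u}_{K}+\norm{\partial_y^{k+1}\bar u}_{K}\leq C|K|^{1/2}$. A Cauchy--Schwarz argument with $|\ds|\leq C\tau\leq C\sq^{1/2}\ln N$ produces a contribution of order $H^{k}\sq^{1/4}(\ln N)^{1/2}\norm{\vphu}_{\ds}$, well inside the claimed bound. For the dominant layer $u_2^{\mathrm b}$ (the one along $y=0$, which is the relevant face for $\ds$), the analogue of \eqref{reg:boundary} gives $|\partial_y^{k+1}u_2^{\mathrm b}|\leq C\sq^{-(k+1)/2}e^{-\beta y/\ssq}$ and $|\partial_x^{k+1}u_2^{\mathrm b}|\leq Ce^{-\beta y/\ssq}$. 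The crucial algebraic identity is
\begin{equation*}
h^{k+1}\sq^{-(k+1)/2}=\bigl(4\sigma\beta^{-1}\ln N\cdot N^{-1}\bigr)^{k+1}=C(N^{-1}\ln N)^{k+1},
\end{equation*}
so after multiplication by $H^{-1}=O(N)$ the prefactor becomes $N^{-k}(\ln N)^{k+1}$. Combined with $\norm{e^{-\beta y/\ssq}}_{\ds}\leq C\sq^{1/4}$ and Cauchy--Schwarz, this yields exactly $C\sq^{1/4}N^{-k}(\ln N)^{k+1}\norm{\vphu}_{\ds}$. The $\partial_x^{k+1}$ term is smaller by a factor $(h/H)^{k+1}$ and is handled identically.

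For the remaining layer components $u_1^{\mathrm b},u_3^{\mathrm b},u_4^{\mathrm b}$ and all four corner layers $u_\ell^{\mathrm c}$, pointwise in $\ds$ the relevant exponential factor is at most $e^{-\beta\tau/\ssq}=N^{-\sigma}$ (since either $x\geq\tau$, or $1-x\geq\tau$, or $y\geq 1-\tau\geq 3/4$). Invoking $L^\infty$-stability \eqref{L0:stb} together with the inverse inequality $\norm{\jump{\vphu}_{i,y}}_{J_j}\leq Ch^{-1/2}\norm{\vphu}_K$, exactly as in \eqref{avg:layer}, each such layer contributes at most $CN\cdot N^{-\sigma}\norm{\vphu}_{\ds}\leq CN^{-k}\norm{\vphu}_{\ds}$, since $\sigma\geq \hat k+1\geq k+1$. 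Summing the three parts completes the proof.

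\textbf{Main obstacle.} The delicate step is the precise balancing in the layer piece: the factor $H^{-1}=O(N)$ produced by Lemma~\ref{sup:edge:element} must be exactly cancelled by the telescoping $h^{k+1}\sq^{-(k+1)/2}=(N^{-1}\ln N)^{k+1}$ that is built into the Shishkin mesh transition parameter $\tau=\sigma\ssq\beta^{-1}\ln N$. It is also essential that the $L^2$ norm of $e^{-\beta y/\ssq}$ over $\ds$ supplies an additional $\sq^{1/4}$ (not merely $\sq^{1/2}$), which is what makes the $\sq^{1/4}$ on the right-hand side sharp. Without these coincidences, Lemma~\ref{sup:edge:element} would only yield a suboptimal rate on the fine-mesh regions.
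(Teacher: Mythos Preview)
Your overall strategy matches the paper exactly: decompose $u$, use Lemma~\ref{sup:edge:element} for $\bar u$ and for the dominant layer at $y=0$, and use $L^\infty$-stability plus an inverse inequality for the remaining exponentially small components. The smooth part and the dominant layer are handled correctly.

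There is, however, a genuine gap in your treatment of the remaining layer components. Your final claim for them is $CN\cdot N^{-\sigma}\norm{\vphu}_{\ds}\leq CN^{-k}\norm{\vphu}_{\ds}$, but this has no $\sq^{1/4}$ factor and therefore is \emph{not} bounded by the target $C\sq^{1/4}N^{-k}(\ln N)^{k+1}\norm{\vphu}_{\ds}$: since $C$ must be independent of~$\sq$, the estimate fails as $\sq\to 0$. The error is in your trace inequality. You wrote $\norm{\jump{\vphu}_{i,y}}_{J_j}\leq Ch^{-1/2}\norm{\vphu}_K$, but the edge $x=x_i$ lies in the coarse part, so the correct (and sharp) bound is $\norm{\jump{\vphu}_{i,y}}_{J_j}\leq CH^{-1/2}\norm{\vphu}_K$. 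Pairing this with $\norm{\average{z-\Pi_y^{-}z}_{i,y}}_{J_j}\leq h_j^{1/2}\norm{z}_{L^{\infty}}$ (here $h_j=h$, the fine mesh width in~$y$) gives a per-term factor $h^{1/2}H^{-1/2}=(h/H)^{1/2}\leq C\sq^{1/4}(\ln N)^{1/2}$. After Cauchy--Schwarz over the $O(N^2)$ terms one obtains $C\tau^{1/2}N^{-\sigma+1}\norm{\vphu}_{\ds}\leq C\sq^{1/4}(\ln N)^{1/2}N^{-\hat k}\norm{\vphu}_{\ds}$, exactly as in the paper's bound~\eqref{avg:crn:layer}. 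In other words, the missing $\sq^{1/4}$ comes precisely from the mesh anisotropy in~$\ds$ ($h$ in $y$ versus $H$ in $x$), which your argument suppresses by using the wrong scale in the inverse inequality. (Minor: your parenthetical ``$y\geq 1-\tau\geq 3/4$'' should read ``$1-y\geq 1-\tau\geq 3/4$''.)
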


\begin{proof}
We prove \eqref{sup:Pi:2d:y:-}; the other inequalities
can be proved in a similar manner.
For the smooth component $\bar{u}$, use Lemma~\ref{sup:edge:element}, 
a Cauchy-Schwarz inequality, Lemma~\ref{lemma:prop:2d} 
and the $O(\tau)$ measure of the layer region $\ds$
to obtain 
\begin{align}
&\hspace{-2cm}\left|\sum_{j=1}^{N/4}\sum_{i=N/4+1}^{3N/4-1}
\dual{\average{\bar{u}-\Pi_y^{-} \bar{u}}_{i,y}}{\jump{\vphu}_{i,y}}_{J_j}
\right|\nonumber\\
&\leq 
C\sum_{j=1}^{N/4}\sum_{i=N/4+1}^{3N/4-1} 
H^{-1}\left[
H^{k+1}\norm{\partial_x^{k+1} \bar{u}}_{K_{ij}\cup K_{i+1,j}}  
\right.  \notag\\
&\qquad \left. 
+ h^{k+1}\norm{\partial_y^{k+1}\bar{u}}_{K_{ij}\cup K_{i+1,j}}
\right]	
\norm{\vphu}_{K_{ij}\cup K_{i+1,j}}
\nonumber\\
&\leq CN^{-k}\Big(\norm{\partial_x^{k+1} \bar{u}}_{\ds}
+\norm{\partial_y^{k+1}\bar{u}}_{\ds}
\Big)\norm{\vphu}_{\ds}
\nonumber\\
&\leq C\tau^{1/2}N^{-k}\norm{\vphu}_{\ds}
\leq C\sq^{1/4}N^{-k}(\ln N)^{1/2}\norm{\vphu}_{\ds}. \label{avg:regular:1b}
\end{align}

For the layer component $u^{\mathrm{b}}_{4}$, 
use Lemma~\ref{sup:edge:element}, a Cauchy-Schwarz inequality
and the bounds on derivatives of $u^{\mathrm{b}}_{4}$ from Lemma~\ref{lemma:prop:2d}, 
obtaining
\begin{align}
&\left|\sum_{j=1}^{N/4}\sum_{i=N/4+1}^{3N/4-1}
\dual{\average{u^{\mathrm{b}}_{4}-\Pi_y^{-} u^{\mathrm{b}}_{4}}_{i,y}}{\jump{\vphu}_{i,y}}_{J_j}
\right|\nonumber\\
&\leq 
C\sum_{j=1}^{N/4}\sum_{i=N/4+1}^{3N/4-1} 
H^{-1}\left[
H^{k+1}\norm{\partial_x^{k+1} u^{\mathrm{b}}_{4}}_{K_{ij}\cup K_{i+1,j}}
+ h^{k+1}\norm{\partial_y^{k+1}u^{\mathrm{b}}_{4}}_{K_{ij}\cup K_{i+1,j}}
\right]	
\norm{\vphu}_{K_{ij}\cup K_{i+1,j}}
\nonumber\\
&\leq 
C\sum_{j=1}^{N/4}\sum_{i=N/4+1}^{3N/4-1} 
\left[N^{-k}+N(N^{-1}\ln N)^{k+1}\right]	
\norm{e^{-\beta y/\sqrt{\sq}}}_{K_{ij}\cup K_{i+1,j}}
\norm{\vphu}_{K_{ij}\cup K_{i+1,j}}
\nonumber\\
\label{avg:bry:1}
&\leq CN^{-k}(\ln N)^{k+1}\norm{e^{-\beta y/\sqrt{\sq}}}_{\ds}
\norm{\vphu}_{\ds}
\leq C\sq^{1/4}N^{-k}(\ln N)^{k+1}\norm{\vphu}_{\ds}.
\end{align}

For the remaining components $z=u^{\mathrm{b}}_{i} (i=1,2,3)$
and $z=u^{\mathrm{c}}_{i} (i=1,2,3,4)$, one has
$\norm{z}_{L^{\infty}(\ds)}\leq CN^{-\sigma}$
using Lemma~\ref{lemma:prop:2d}.
Recall that  $\sigma\geq \hat{k}+1$. 
Thus, a Cauchy-Schwarz inequality,
the $L^{\infty}$-stability property~\eqref{L0:stb}  
and an inverse inequality yield
\begin{align}
&\hspace{-1cm}\left|\sum_{j=1}^{N/4}\sum_{i=N/4+1}^{3N/4-1}
\dual{\average{z-\Pi_y^{-} z}_{i,y}}{\jump{\vphu}_{i,y}}_{J_j}
\right|
\nonumber\\
&\leq 
\sum_{j=1}^{N/4} \sum_{i=N/4+1}^{3N/4-1}
\norm{\average{z-\Pi_y^{-} z}_{i,y}}_{J_j}
\norm{\jump{\vphu}_{i,y}}_{J_j}
\nonumber\\
&\leq 
C\sum_{j=1}^{N/4}\sum_{i=N/4+1}^{3N/4-1}
h_j^{1/2}\norm{z-\Pi_y^{-} z}_{L^{\infty}(K_{ij}\cup K_{i+1,j})}
h_i^{-1/2}\norm{\vphu}_{K_{ij}\cup K_{i+1,j}}
\nonumber\\
&\leq 
C\sum_{j=1}^{N/4}\sum_{i=N/4+1}^{3N/4-1}
N^{1/2}h_j^{1/2}\norm{z}_{L^{\infty}(K_{ij}\cup K_{i+1,j})}
\norm{\vphu}_{K_{ij}\cup K_{i+1,j}}
\nonumber\\
&\leq 
C\sum_{j=1}^{N/4}\sum_{i=N/4+1}^{3N/4-1}
h_j^{1/2}N^{-\sigma+1/2}\norm{\vphu}_{K_{ij}\cup K_{i+1,j}}
\nonumber\\
\label{avg:crn:layer}
&
\leq C\tau^{1/2} N^{-\sigma+1}\norm{\vphu}_{\ds}
\leq C\sq^{1/4}N^{-\hat{k}}(\ln N)^{1/2}\norm{\vphu}_{\ds}.
\end{align}

Finally, \eqref{sup:Pi:2d:y:-} follows from \eqref{avg:regular:1b}, \eqref{avg:bry:1},
 \eqref{avg:crn:layer} and a triangle inequality.
\end{proof}

\subsection{Superapproximation properties of vertex-edge-element projectors}
\label{sec:superapprox_vertex_edge_element}

Now we move on to the vertex-edge-element approximation operators
$\Pi_{xy}^{+},\Pi_{xy}^{\pm},\Pi_{xy}^{\mp}$ and $\Pi_{xy}^{-}$. 
For each element $K_{ij}\in \Omega_N$ and each $\vphu\in \spc$,
define the bilinear forms 
\begin{align*}
\mathcal{D}^-_{ij,x}(u-\prog u,\vphu)
&:=
\dual{u-\prog u}{\vphu_x}_{K_{ij}}
-\dual{(u-\prog u)^{-}_{i,y}}{\vphu^{-}_{i,y}}_{J_{j}}
+\dual{(u-\prog u)^{-}_{i-1,y}}{\vphu^{+}_{i-1,y}}_{J_{j}},
\\
\mathcal{D}^+_{ij,x}(u-\prog u,\vphu)
&:=
\dual{u-\prog u}{\vphu_x}_{K_{ij}}
-\dual{(u-\prog u)^{+}_{i,y}}{\vphu^{-}_{i,y}}_{J_{j}}
+\dual{(u-\prog u)^{+}_{i-1,y}}{\vphu^{+}_{i-1,y}}_{J_{j}},
\\
\mathcal{D}^-_{ij,y}(u-\prog u,\vphu)
&:=
\dual{u-\prog u}{\vphu_y}_{K_{ij}}
-\dual{(u-\prog u)^{-}_{x,j}}{\vphu^{-}_{x,j}}_{I_{i}}
+\dual{(u-\prog u)^{-}_{x,j-1}}{\vphu^{+}_{x,j-1}}_{I_{i}},
\\
\mathcal{D}^+_{ij,y}(u-\prog u,\vphu)
&:=
\dual{u-\prog u}{\vphu_y}_{K_{ij}}
-\dual{(u-\prog u)^{+}_{x,j}}{\vphu^{-}_{x,j}}_{I_{i}}
+\dual{(u-\prog u)^{+}_{x,j-1}}{\vphu^{+}_{x,j-1}}_{I_{i}}
\end{align*}
for any projector $\prog$, where we set
$(u-\prog u)^{-}_{0,y}=(u-\prog u)^{-}_{x,0}=(u-\prog u)^{+}_{N,y}=(u-\prog u)^{+}_{x,N}=0$.
The next lemma presents superapproximation results for these bilinear forms.

\begin{lemma}
	\label{superapproximation:element}
	Assume that  Lemma~\ref{lemma:prop:2d} is valid when $m=k$.
		Let $K_{ij}\in\Omega_N$.
		Let $z\in C^{k+2}(\bar  K_{ij})$.
		Then there exists a constant $C>0$ such that for all $\vphu\in\spc$ one has
	\begin{subequations}\label{sup:GR:element:1}
	\begin{align}
	\label{sup:P-:L2}
	|\mathcal{D}^-_{ij,x}(z-\prog ^{-} z,\vphu)|
	&\leq
	Ch_i^{-1}\left[h_{i}^{k+2}\norm{\partial_x^{k+2}z}_{K_{ij}}
	+h_{j}^{k+2}\norm{\partial_y^{k+2}z}_{K_{ij}}
	\right]\norm{\vphu}_{K_{ij}}
	\end{align}
	for $\prog ^{-}=\Pi_{xy}^{-}, \Pi_{xy}^{\mp}$ and
	\begin{align}
	|\mathcal{D}^+_{ij,x}(z-\prog^+ z,\vphu)|
	&\leq
	Ch_i^{-1}\left[h_{i}^{k+2}\norm{\partial_x^{k+2}z}_{K_{ij}}
	+h_{j}^{k+2}\norm{\partial_y^{k+2}z}_{K_{ij}}
	\right]\norm{\vphu}_{K_{ij}}
	\end{align}
	\end{subequations}
	for $\prog^+=\Pi_{xy}^{+}, \Pi_{xy}^{\pm}$.
	Furthermore, one has superapproximation properties for 
	 $\prog ^{-}=\Pi_{xy}^{-}, \Pi_{xy}^{\mp}$ and 
	 $\prog^+=\Pi_{xy}^{+}, \Pi_{xy}^{\pm}$ in the corner layer regions:
	\begin{subequations}\label{sup:GR:element:2}
	\begin{align}
	\label{sup:L2:bilinear:P-}
	\sup_{\vphu\in\spc}\sum_{K_{ij}\in \dws\cup \dwn}
	&\Bigg(\frac{|\mathcal{D}^-_{ij,x}(u-\prog^{-} u,\vphu)|}{\norm{\vphu}_{K_{ij}}}
	\Bigg)^2
	\leq C\varepsilon^{-1/2}(N^{-1}\ln N)^{2(k+1)},
	\\
	\label{sup:L2:bilinear:P+}
	\sup_{\vphu\in\spc}	\sum_{K_{ij}\in \des\cup\den}
	&\Bigg(\frac{|\mathcal{D}^+_{ij,x}(u-\prog^{+} u,\vphu)|}{\norm{v}_{K_{ij}}}
	\Bigg)^2
	\leq C\varepsilon^{-1/2}(N^{-1}\ln N)^{2(k+1)}.
	\end{align}
   \end{subequations}
	Analogous bounds hold true for 
	$\mathcal{D}^-_{ij,y}(u-\prog^{-} u,\vphu)$
	and $\mathcal{D}^+_{ij,y}(u-\prog^{+} u,\vphu)$.
\end{lemma}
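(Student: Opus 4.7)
The plan is to prove Lemma~\ref{superapproximation:element} in two stages, both relying on the tensor-product Legendre expansion~\eqref{expan:coef} and the machinery developed in the proofs of Lemmas~\ref{sup:L2:element} and~\ref{sup:edge:element}.

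\emph{Stage~1: the single-element estimate~\eqref{sup:GR:element:1}.} First I would integrate by parts in $x$ inside $\dual{z-\prog^- z}{\vphu_x}_{K_{ij}}$, obtaining
\begin{equation*}
\mathcal{D}^-_{ij,x}(z-\prog^- z,\vphu) = -\dual{(z-\prog^- z)_x}{\vphu}_{K_{ij}} + \dual{(\prog^- z)^+_{i-1,y}-(\prog^- z)^-_{i-1,y}}{\vphu^+_{i-1,y}}_{J_j},
\end{equation*}
where the trace at $x_i$ cancels and smoothness of $z$ kills any jump of $z$ itself. Next I would expand $z$ on $K_{ij}$ in the tensor-product Legendre basis and use the defining conditions of $\prog^-\in\{\Pi_{xy}^-,\Pi_{xy}^\mp\}$ (interior orthogonality against $\mathcal{Q}^{k-1}$, edge moments against $\mathcal{P}^{k-1}$, and a single vertex value) to pin down the Legendre coefficients of $\prog^- z$. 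As in Lemma~\ref{sup:edge:element}, this identifies exactly which tail modes of $z$ survive in $(z-\prog^- z)_x$ and in the edge trace of $\prog^- z$. Each surviving coefficient is then bounded by integration by parts, following the template of~\eqref{IBP}--\eqref{sum:2}, giving the sharp quantities $h_i^{k+1}\norm{\partial_x^{k+2} z}_{K_{ij}}$ and $h_j^{k+1}\norm{\partial_y^{k+2} z}_{K_{ij}}$. Cauchy--Schwarz combined with the inverse trace estimate $\norm{\vphu^+_{i-1,y}}_{J_j}\le Ch_i^{-1/2}\norm{\vphu}_{K_{ij}}$ produces the $h_i^{-1}$ prefactor. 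The same outline, with $x\leftrightarrow y$, handles $\prog^+\in\{\Pi_{xy}^+,\Pi_{xy}^\pm\}$ and $\mathcal{D}^\pm_{ij,y}$.

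\emph{Stage~2: corner-region sums~\eqref{sup:GR:element:2}.} Armed with Stage~1, I would sum over $K_{ij}\in\dws\cup\dwn$ after splitting $u$ into the components supplied by Lemma~\ref{lemma:prop:2d}. For the smooth component $\bar u$, the derivative bounds are uniform and the $O(\tau^2)$ area of the corner regions yields a contribution far smaller than claimed. For the dominant corner layer $u_1^{\mathrm{c}}$ on $\dws$, inserting the bound $C\sq^{-(k+2)/2}e^{-\beta(x+y)/\ssq}$ from~\eqref{reg:corner} together with $h=O(\ssq N^{-1}\ln N)$ makes all $\sq$-powers cancel except a single $\sq^{-1/2}$ produced by integrating the exponential over $\dws$, while the factor $(\ln N)^{2(k+1)}$ emerges from the $h^{k+2}$ powers. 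Boundary-layer components $u_i^{\mathrm{b}}$ and the remote corner layers $u_j^{\mathrm{c}}$ ($j\neq 1$) active on $\dws$ decay as $N^{-\sigma}$ with $\sigma\ge\hat k+1$ and are therefore negligible. A symmetric argument on $\dwn$ with $u_3^{\mathrm{c}}$ in the dominant role completes~\eqref{sup:L2:bilinear:P-}, and \eqref{sup:L2:bilinear:P+} follows by reflection about $x=1/2$.

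\emph{Main obstacle.} The heart of the proof is Stage~1: unlike the pure $L^2$ projector of Lemma~\ref{sup:L2:element} and the one-directional Gauss--Radau projectors of Lemma~\ref{sup:edge:element}, the vertex-edge-element projectors are genuinely two-dimensional, with coupled interior, edge, and vertex defining conditions. Careful bookkeeping is required to identify precisely which Legendre tail modes of $z$ survive in $z-\prog^- z$ and to extract the extra order $h^{k+2}$ (beyond the generic $h^{k+1}$) via a second integration by parts as in~\eqref{IBP}. This super-approximation gain is exactly what feeds into the energy-norm supercloseness estimate~\eqref{super} driving the entire balanced-norm analysis.
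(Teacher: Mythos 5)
Your overall architecture agrees with the paper's: the per-element bounds \eqref{sup:GR:element:1} are established first (the paper simply cites \cite[Lemma~4.8]{Zhu:2dMC}; your integration-by-parts/Legendre-tail reconstruction is a plausible route to the same estimate, though note that after integrating by parts the term $\dual{(\prog^{-}z)^+_{i-1,y}-(\prog^{-}z)^-_{i-1,y}}{\vphu^+_{i-1,y}}_{J_j}$ involves the projection on the neighbouring element $K_{i-1,j}$, so some care is needed to arrive at a bound containing only norms over $K_{ij}$), and the corner-region sums \eqref{sup:GR:element:2} are then obtained by applying \eqref{sup:P-:L2} componentwise to the decomposition of Lemma~\ref{lemma:prop:2d} and summing over elements.

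However, your Stage~2 contains a genuine error in the bookkeeping of the solution components. On $\dws=(0,\tau)\times(0,\tau)$ the boundary-layer components attached to the edges $x=0$ and $y=0$ behave like $e^{-\beta x/\ssq}$ and $e^{-\beta y/\ssq}$ and are therefore of size $O(1)$ on $\dws$, not $O(N^{-\sigma})$; your claim that the $u_i^{\mathrm{b}}$ active on the corner regions ``decay as $N^{-\sigma}$ and are therefore negligible'' is false for the two layers attached to the edges that meet at the corner (only the layers on the far edges and the three remote corner layers enjoy that decay). In the paper's own computation it is precisely $u_1^{\mathrm{b}}$ that produces the largest contribution, $C\varepsilon^{-1/2}(N^{-1}\ln N)^{2(k+1)}$, whereas the corner layer $u_1^{\mathrm{c}}$ yields only $C(N^{-1}\ln N)^{2(k+1)}$ with no negative power of $\varepsilon$: since $\norm{e^{-\beta(x+y)/\ssq}}_{\dws}^2=O(\sq)$ while $h^{-2}\,h^{2(k+2)}\,\sq^{-(k+2)}=C\sq^{-1}(N^{-1}\ln N)^{2(k+1)}$, the $\varepsilon$-powers cancel completely, so your assertion that integrating the corner exponential leaves ``a single $\sq^{-1/2}$'' is arithmetically off. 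The stated bound is still true --- the boundary-layer terms you dismissed do satisfy it once they are run through the per-element estimate, using $\norm{e^{-\beta x/\ssq}}^2_{\dws\cup\dwn}=O(\tau\ssq)$ together with $\ln N\le C\sq^{-1/2}$ from \eqref{mild:condition:epsilon} --- but as written your argument discards exactly the terms that force the factor $\varepsilon^{-1/2}$ into the statement of the lemma.
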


\begin{proof}
The inequalities \eqref{sup:GR:element:1} are proved similarly to \cite[Lemma~4.8]{Zhu:2dMC}. 

To prove \eqref{sup:L2:bilinear:P-}, we imitate \cite[Lemma~3.3]{ChengStynes2023}.
Let $\vphu\in\spc$.
Given $K_{ij}\in \dws\cup \dwn$ so $h_i = h_j = h$,
use \eqref{sup:P-:L2} and Lemma~\ref{lemma:prop:2d} to get 
\begin{align*}
|\mathcal{D}^-_{ij,x}(z-\prog ^{-} z,\vphu)|\leq
\begin{cases}
CN^{-(k+2)} \norm{\vphu}_{K_{ij}}
&\text{if } z=\bar{u},
\\
Ch_i^{-1}(N^{-1}\ln N)^{k+2}
\norm{e^{-\beta x/\sqrt{\sq}}}_{K_{ij}} \norm{\vphu}_{K_{ij}}
&\text{if }  z=u_1^{\mathrm{b}},
\\
Ch_i^{-1}(N^{-1}\ln N)^{k+2}
\norm{e^{-\beta x/\sqrt{\sq}}e^{-\beta y/\sqrt{\sq}}}_{K_{ij}}
\norm{\vphu}_{K_{ij}}
&\text{if }  z=u_1^{\mathrm{c}}.
\end{cases}
\end{align*}
After some straightforward calculations, one obtains
\begin{align*}
\sum_{K_{ij}\in \dws\cup \dwn}
\Bigg(\frac{|\mathcal{D}^-_{ij,x}(z-\prog^{-} z,\vphu)|}{\norm{\vphu}_{K_{ij}}}
\Bigg)^2
\leq 
\begin{cases}
CN^{-2(k+1)} &\text{if }  z=\bar{u},
\\
C\varepsilon^{-1/2}(N^{-1}\ln N)^{2(k+1)} &\text{if }  z=u_1^{\mathrm{b}},
\\
C (N^{-1}\ln N)^{2(k+1)} &\text{if }  z=u_1^{\mathrm{c}}.
\end{cases}
\end{align*}
The remaining components $u_i^{\mathrm{b}}$ and $u_i^{\mathrm{c}}$ for $i=2,3,4$ can be bounded similarly, so we get~\eqref{sup:L2:bilinear:P-}.
One can likewise prove \eqref{sup:L2:bilinear:P+}.
\end{proof}

\subsection{Two bounds on $\uph-\prou u$}
Recall from Section~\ref{sec:LDG:method} that $\bm w = (u,p,q)$.
Set $\prow \bm w:=(\prou u,\prop p,\proq q)\in\spc^3$, where $\prou, \prop$ and $\proq$ were defined in \eqref{prj:u:2d}. 
Then the LDG error is 
$\bm e:=(e_u,e_p,e_q) :=(u-\uph,p-\pph,q-\qph)$,  which can be written as
\begin{equation}\label{error:decomposition:2d}
\bm e = \bm w - \bm W = (\bm w-\prow \bm w)-(\wph-\prow\bm w)  = \bm \eta-\bm \xi,
\end{equation}
where we define 
\begin{equation*}
\begin{split}
\bm \eta  &:= (\eta_u,\eta_p,\eta_q)= (u-\prou u,p-\prop p,q-\proq q),
\\
\bm \xi &:= (\xi_u,\xi_p,\xi_q)= (\uph-\prou u,\pph-\prop p,\qph-\proq q)\in \spc^3.
\end{split}
\end{equation*}

In this subsection we bound $\xi_u=\uph-\prou u$
along the fine/coarse mesh interfaces.

\begin{lemma}\label{lemma:sup:bry}
There exists a constant $C>0$ such that
\begin{subequations}
\begin{align}
\label{bry:L}
\left(\sum_{j=1}^{N}\norm{(\xi_u)_{N/4,y}^-}^2_{J_j}\right)^{1/2}
\leq C\sqrt{\frac{\tau}{\sq}}\,\Big[\enorm{\bm\xi}+\sq^{1/4}(N^{-1}\ln N)^{k+1}\Big],
\\
\label{bry:R}
\left(\sum_{j=1}^{N}\norm{(\xi_u)_{3N/4,y}^+}^2_{J_j}\right)^{1/2}
\leq C\sqrt{\frac{\tau}{\sq}}\,\Big[\enorm{\bm\xi}+\sq^{1/4}(N^{-1}\ln N)^{k+1}\Big].
\end{align}
\end{subequations}
\end{lemma}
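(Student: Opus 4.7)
My plan is to test the LDG equation~\eqref{vartional:equation:p} against a function that is constant in~$x$, after summing over the left fine-mesh strip $i=1,\dots,N/4$. On each element $K_{ij}$, subtracting~\eqref{vartional:equation:p} from the identity $\varepsilon^{-1}\dual{p}{\vphp}_{K_{ij}}+\dual{u}{\vphp_x}_{K_{ij}}-\dual{u_{i,y}}{\vphp^{-}_{i,y}}_{J_j}+\dual{u_{i-1,y}}{\vphp^{+}_{i-1,y}}_{J_j}=0$ for the exact solution (obtained by integrating $\varepsilon^{-1}p=u_x$ by parts in~$x$ and invoking continuity of~$u$ together with $u|_{\partial\Omega}=0$), and exploiting $\widehat{\uph}_{i,y}=\uph^{-}_{i,y}$ for $i=1,\dots,N/4$ and $\widehat{\uph}_{0,y}=0$, yields
\[
\varepsilon^{-1}\dual{e_p}{\vphp}_{K_{ij}}+\dual{e_u}{\vphp_x}_{K_{ij}}
-\dual{(e_u)^{-}_{i,y}}{\vphp^{-}_{i,y}}_{J_j}
+\dual{(e_u)^{-}_{i-1,y}}{\vphp^{+}_{i-1,y}}_{J_j}=0.
\]
Summing over $i=1,\dots,N/4$ telescopes the edge contributions into $-\dual{(e_u)^{-}_{N/4,y}}{\vphp^{-}_{N/4,y}}_{J_j}+\sum_{i=1}^{N/4-1}\dual{(e_u)^{-}_{i,y}}{\jump{\vphp}_{i,y}}_{J_j}$.

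For each fixed~$j$ I would then choose $\vphp(x,y):=\psi(y)$ with $\psi:=(\xi_u)^{-}_{N/4,y}\in\mathcal{P}^k(J_j)$, extended as a constant in~$x$ on the strip $D_j:=(0,x_{N/4})\times J_j$ and zero elsewhere; then $\vphp\in\spc$, and both $\vphp_x$ and all interior $x$-jumps of~$\vphp$ vanish. Substituting the decomposition~\eqref{error:decomposition:2d} of $\bm e=\bm\eta-\bm\xi$ and rearranging, the surviving identity is
\[
\|(\xi_u)^{-}_{N/4,y}\|^2_{J_j}
=\dual{(\eta_u)^{-}_{N/4,y}}{\psi}_{J_j}
+\varepsilon^{-1}\dual{\xi_p-\eta_p}{\psi}_{D_j}.
\]
When $j\in\{N/4+1,\dots,3N/4\}$, $K_{N/4,j}\subset\dw$ and $\prou u=\Pi_x^{-}u$; the defining property~\eqref{Pi:x:-} makes $(\eta_u)^{-}_{N/4,y}$ $L^2(J_j)$-orthogonal to $\mathcal{P}^k(J_j)$, killing the first right-hand term exactly. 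When $j\in\{1,\dots,N/4\}\cup\{3N/4+1,\dots,N\}$, $\prou u$ is one of $\Pi_{xy}^{-}$ or $\Pi_{xy}^{\mp}$, supplying orthogonality only against $\mathcal{P}^{k-1}(J_j)$; there I would bound the first term via Cauchy-Schwarz together with the pointwise estimate $\|(\eta_u)^{-}_{N/4,y}\|_{J_j}\le|J_j|^{1/2}\|\eta_u\|_{L^\infty(K_{N/4,j})}\le Ch^{1/2}(N^{-1}\ln N)^{k+1}$ taken from~\eqref{bound:eu:L0:2d}. The two $D_j$-inner products are estimated uniformly by Cauchy-Schwarz using $\|\psi\|_{D_j}=\sqrt{\tau}\|\psi\|_{J_j}$, together with $\|\xi_p\|^2\le\varepsilon\enorm{\bm\xi}^2$ and $\|\eta_p\|_{\Omega^x_1}\le C\varepsilon^{3/4}(N^{-1}\ln N)^{k+1}$ from Lemma~\ref{lemma:app:2d}.

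Dividing by $\|(\xi_u)^{-}_{N/4,y}\|_{J_j}$, squaring and summing over~$j$ assembles three contributions: the $\xi_p$-piece produces $\varepsilon^{-2}\tau\|\xi_p\|^2\le(\tau/\varepsilon)\enorm{\bm\xi}^2$; the $\eta_p$-piece produces $\varepsilon^{-2}\tau\|\eta_p\|^2_{\Omega^x_1}\le C(\tau/\varepsilon)\varepsilon^{1/2}(N^{-1}\ln N)^{2(k+1)}$; and the corner boundary contribution $\sum_{\text{corner }j}h(N^{-1}\ln N)^{2(k+1)}\le 4\tau(N^{-1}\ln N)^{2(k+1)}$, which is also bounded by $C(\tau/\varepsilon)\varepsilon^{1/2}(N^{-1}\ln N)^{2(k+1)}$ because $\varepsilon\le 1$. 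Taking square roots yields~\eqref{bry:L}. The bound~\eqref{bry:R} follows from the mirror-image argument: sum~\eqref{vartional:equation:p} over $i=3N/4+1,\dots,N$ with the right-side layer-upwind flux $\widehat{\uph}_{i,y}=\uph^{+}_{i,y}$, test with $\psi=(\xi_u)^{+}_{3N/4,y}$, and invoke $\prop p=\Pi_x^{+}p$ on $\Omega^x_3$ and the projectors $\Pi_x^{+},\Pi_{xy}^{\pm},\Pi_{xy}^{+}$ of~\eqref{prj:u:2d} on $\de\cup\des\cup\den$. The main obstacle is the corner strip: the vertex-edge-element projectors fail to annihilate the full $\mathcal{P}^k(J_j)$ test against $(\eta_u)^{-}_{N/4,y}$, so the exact middle-strip cancellation must be replaced by a pointwise estimate, and it is precisely the aggregate $Nh=4\tau$ there that fits the $\sqrt{\tau/\varepsilon}$ scaling claimed by the lemma.
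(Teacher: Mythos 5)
Your argument is correct, and it reaches the stated bounds by a genuinely different route from the paper. The paper's proof works element by element: it first quotes (from an earlier LDG paper) local relationships that bound $\norm{(\xi_u)_x}_{K_{ij}}$ and $h_i^{-1}\norm{\jump{\xi_u}_{i-1,y}}^2_{J_j}$ by $\varepsilon^{-2}\big(\norm{\eta_p}_{K_{ij}}^2+\norm{\xi_p}_{K_{ij}}^2\big)$ plus the supremum of $|\mathcal{D}^{-}_{ij,x}(\eta_u,\cdot)|$, then recovers the interface trace through the telescoping identity $(\xi_u)^-_{N/4,y}=\sum_{i\le N/4}\int_{I_i}(\xi_u)_x+\sum_{i\le N/4}\jump{\xi_u}_{i-1,y}$ and Cauchy--Schwarz (which is where $\tau^{1/2}$ enters), and finally invokes the corner-region superapproximation bounds of Lemma~\ref{superapproximation:element} together with $\mathcal{D}^-_{ij,x}(u-\Pi_x^-u,\cdot)=0$ on $\dw$. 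You instead test the local $p$-equation~\eqref{vartional:equation:p} with a function constant in $x$ across the whole fine strip, so that the layer-upwind fluxes $\widehat{\uph}_{i,y}=\uph^-_{i,y}$ telescope all the way from the boundary to the transition interface; this collapses the estimate to a single identity involving only $(\eta_u)^-_{N/4,y}$ and the strip integrals of $\xi_p,\eta_p$, and the $\tau^{1/2}$ comes from $\norm{\psi}_{D_j}=\sqrt{\tau}\,\norm{\psi}_{J_j}$. Your route entirely bypasses the element relationships and Lemma~\ref{superapproximation:element} for this lemma, replacing them with the exact trace orthogonality of $\Pi_x^-$ on the middle strip and a crude $L^\infty$ trace bound on the $N/2$ corner rows, whose aggregate $\sum_j h=2\tau$ is absorbed into $\tau\varepsilon^{-1/2}$ since $\varepsilon\le 1$; all the quantitative ingredients ($\norm{\xi_p}^2\le\varepsilon\enorm{\bm\xi}^2$, $\norm{\eta_p}_{\Omega_1^x}\le C\varepsilon^{3/4}(N^{-1}\ln N)^{k+1}$, $\norm{\eta_u}_{L^\infty(\Omega\setminus\dreg)}\le C(N^{-1}\ln N)^{k+1}$) check out and reproduce exactly the claimed $\sqrt{\tau/\varepsilon}\big[\enorm{\bm\xi}+\varepsilon^{1/4}(N^{-1}\ln N)^{k+1}\big]$. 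The paper's heavier machinery buys reusable control of $(\xi_u)_x$ and the jumps on the fine strip, but for the trace bound itself your argument is shorter. One small slip in your mirror case: by~\eqref{prj:u:2d} one has $\prop p=\Pi_x^-p$ (not $\Pi_x^+p$) on $\Omega_3^x$; this is immaterial, since your treatment of the $\eta_p$ term uses only Cauchy--Schwarz and the $L^2$ bound~\eqref{bound:ep:L2:2d}, which covers $\dxa\cup\dxc$.
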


	
\begin{proof}
The two inequalities can be proved in a similar manner; we shall prove~\eqref{bry:L}.
Similarly to the proof of \cite[Lemma~3.4]{ChengJiangStynes2023}, one has the element relationships
\begin{align*}
\norm{(\xi_u)_x}_{K_{ij}}
& \leq 
C\varepsilon^{-1}(\norm{\eta_p}_{K_{ij}}+\norm{\xi_p}_{K_{ij}})
+C\sup_{\vphu\in\spc}
\frac{|\mathcal{D}^{-}_{ij,x}(\eta_u,\vphu)|}{\norm{\vphu}_{K_{ij}}}
\intertext{and}
h_i^{-1}\norm{\jump{\xi_u}_{i-1,y}}^2_{J_j}
&\leq 
C\Bigg[
\varepsilon^{-2}\left(\norm{\eta_p}_{K_{ij}}^2+\norm{\xi_p}_{K_{ij}}^2\right)
+\norm{(\xi_u)_x}_{K_{ij}}^2  \\
&\qquad  +\sup_{\vphu\in\spc}\left(\frac{|\mathcal{D}^{-}_{ij,x}(\eta_u,\vphu)|}{\norm{\vphu}_{K_{ij}}}\right)^2
\Bigg]
\end{align*}
for any $K_{ij}\in\Omega_{1}^x$, which yield 
\begin{align}
\norm{(\xi_u)_x}_{K_{ij}}^2
&+h_i^{-1}\norm{\jump{\xi_u}_{i-1,y}}^2_{J_j}
\nonumber\\
&\leq 
C\left[
\varepsilon^{-2}\left(\norm{\eta_p}_{K_{ij}}^2+\norm{\xi_p}_{K_{ij}}^2\right)
+\sup_{\vphu\in\spc}\left(\frac{|\mathcal{D}^{-}_{ij,x}(\eta_u,\vphu)|}{\norm{\vphu}_{K_{ij}}}\right)^2
\right]. \label{relation:xiu:xiq}
\end{align}

Starting from the elementary identity
\[
(\xi_u)_{N/4,y}^-
=\sum_{i=1}^{N/4}\int_{I_i}(\xi_{u})_{x} \,dx + \sum_{i=1}^{N/4}\jump{\xi_u}_{i-1,y}
\]
with $\jump{\xi_u}_{0,y}=(\xi_{u})^+_{0,y}$,
we use the Cauchy-Schwarz inequality to obtain
\begin{align}
|(\xi_u)_{N/4,y}^-|
&\leq \sum_{i=1}^{N/4}\left[h_i^{1/2} \norm{(\xi_{u})_{x}}_{I_i}+|\jump{\xi_u}_{i-1,y}|\right]
\nonumber\\
&\leq \left(\sum_{i=1}^{N/4} h_i\right)^{1/2}
\left(
\sum_{i=1}^{N/4}\Big[\norm{(\xi_{u})_{x}}^2_{I_i}
+h_i^{-1}|\jump{\xi_u}_{i-1,y}|^2\Big]
\right)^{1/2}
\nonumber\\
\label{ineq}
&= \tau^{1/2}
\left(
\sum_{i=1}^{N/4}\Big[\norm{(\xi_{u})_{x}}^2_{I_i}
+h_i^{-1}|\jump{\xi_u}_{i-1,y}|^2\Big]
\right)^{1/2}.
\end{align}
Then \eqref{ineq}, \eqref{relation:xiu:xiq},
Lemmas \ref{lemma:app:2d} and \ref{superapproximation:element} give
\begin{align*}
\sum_{j=1}^{N} & \norm{(\xi_u)_{N/4,y}^-}^2_{J_j}
=\sum_{j=1}^{N}\int_{J_j}|(\xi_u)_{N/4,y}^-|^2\,\mathrm{d}y
\\
&\leq \tau \sum_{j=1}^{N}\int_{J_j} \sum_{i=1}^{N/4}
\Big(\norm{(\xi_u)_x}_{I_{i}}^2
+h_i^{-1}|\jump{\xi_u}_{i-1,y}|^2\Big) \,\mathrm{d}y
\\
&=\tau \left[\sum_{j=1}^{N}\sum_{i=1}^{N/4}
\left(\norm{(\xi_u)_x}_{K_{ij}}^2
+h_i^{-1}\norm{\jump{\xi_u}_{i-1,y}}^2_{J_j}
\right)
\right]
\\
&\leq C\tau\left[
\sq^{-2}\left(\norm{\eta_p}_{\Omega_{1}^x}^2+\norm{\xi_p}_{\Omega_{1}^x}^2
\right)
+\sup_{\vphu\in\spc}\sum_{K_{ij}\in \Omega_{1}^x}
\Bigg(\frac{|\mathcal{D}^-_{ij,x}(u-\prou u,\vphu)|}{\norm{\vphu}_{K_{ij}}}
\Bigg)^2
\right]
\\
&
\leq C\tau\sq^{-1}\left[
\sq^{-1}\norm{\xi_p}^2+\sq^{-1}\norm{\eta_p}_{\Omega_{1}^x}^2
+\sq\sup_{\vphu\in\spc}\sum_{K_{ij}\in \dws\cup\dwn}
\Bigg(\frac{|\mathcal{D}^-_{ij,x}(u-\prou u,\vphu)|}{\norm{\vphu}_{K_{ij}}}
\Bigg)^2
\right]
\\
&
\leq C\tau\sq^{-1}
\left[\enorm{\bm\xi}^2+\sq^{1/2}(N^{-1}\ln N)^{2(k+1)}\right],
\end{align*}
where we used the property $\mathcal{D}^-_{ij,x}(u-\prou u,\vphu)
=\mathcal{D}^-_{ij,x}(u-\Pi_x^{-} u,\vphu)=0$ for each $K_{ij}\in\dw$.
The proof is complete.
\end{proof}

\section{Energy-norm and balanced-norm error estimates}\label{section:error:analysis}

In this section we establish the main result of the paper, Theorem~\ref{thm:2d}. 
To make the main arguments in its proof easier to follow, we assume that $b$ is a positive constant.
If $b\geq 2\beta^2>0$ is not constant
and $b\in W^{1,\infty}(\bar{\Omega})$, one can still obtain the same error bound by modifying slightly the projector~$\prou$;  see Remark~\ref{remark:b:nonconstant}.

The proof of the balanced-norm error estimate in Theorem~\ref{thm:2d} hinges on the superclose energy-norm bound \eqref{xisuperclose}, so we include an energy-norm error estimate in the theorem 
also since it requires only a little extra work.

\begin{theorem}\label{thm:2d}
Assume that 
$b$ is a positive constant and that Lemma~\ref{lemma:prop:2d} is valid for $m=k+1$.
	Let $\bm w=(u,p,q)=(u,\sq u_x,\sq u_y)$ be the true solution of the reaction-diffusion problem \eqref{spp:R-D:2d}.
	Let $\wph=(\uph,\pph,\qph)\in \spc^3$ be the numerical solution of
	the LDG scheme \eqref{LDG:scheme:2d} with layer-upwind flux \eqref{flux:diffusion:2d}
	on the Shishkin mesh \eqref{mesh:point}	
	with $\sigma\geq \hat{k}+1$, where $\hat{k}=k+1$ for even $k$ and $\hat{k}=k$ for odd $k$.
	Then there exists a constant $C>0$, which is independent of $\varepsilon$ and $N$, such that
\begin{align*}
\enorm{\bm w -\wph}
& \leq C\Big[\sq^{1/4}(N^{-1}\ln N)^{k+1}+\sq^{1/2}N^{-(k+1/2)}+\sq^{1/2} N^{-\hat{k}} \\
&\qquad	+\sq^{3/4}N^{-k}(\ln N)^{k+1}+N^{-(k+1)}\Big]  
\intertext{and}
\ba{\bm w -\wph}
&\leq C\Big[(N^{-1}\ln N)^{k+1}+\sq^{1/4}N^{-(k+1/2)}+\sq^{1/4} N^{-\hat{k}}
	+\sq^{1/2}N^{-k}(\ln N)^{k+1}\Big],   
\end{align*}
where $\enorm{\cdot}$ and $\ba{\cdot}$ were defined in~\eqref{energy:norm:2d} and~\eqref{balanced:norm:2d}.
\end{theorem}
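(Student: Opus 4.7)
The plan is to follow the supercloseness strategy outlined in Section~\ref{sec:intro}. With the decomposition \eqref{error:decomposition:2d}, the Galerkin orthogonality \eqref{Galerkin:orthogonality} gives $B(\bm\eta;\bm\xi)=B(\bm\xi;\bm\xi)=\enorm{\bm\xi}^2$. So the central task is to prove a supercloseness bound $\enorm{\bm\xi}\le C\sq^{1/4}\zeta(N)$ where $\zeta(N)$ matches the bracketed expression in the balanced-norm estimate. Once this is established, the balanced-norm result follows immediately from $\ba{\bm\xi}\le \sq^{-1/4}\enorm{\bm\xi}\le C\zeta(N)$ together with a direct estimate of $\ba{\bm\eta}$ from Lemma~\ref{lemma:app:2d}; the energy-norm estimate follows similarly from $\enorm{\bm\eta}$.

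To control $B(\bm\eta;\bm\xi)$, I would expand it using~\eqref{B:def:2d} and group the contributions by the mesh subregion in which the projectors of \eqref{prj:u:2d} change form. In the coarse-mesh interior $\dreg$, where $\prou u=\Pi u$, $\prop p=\Pi p$, $\proq q=\Pi q$ and the numerical flux is central, the interior-flux terms $\dual{\average{\eta_u}_{i,y}}{\jump{\xi_p}_{i,y}}$ and its analogues fit the hypotheses of Lemma~\ref{lemma:sup:L2:2d}, yielding $O(N^{-\hat k})$ bounds against $\norm{\bm\xi}$. In the fine-mesh edge strips $\ds$, $\dn$, $\dw$, $\de$, the single-direction Gauss--Radau projectors were chosen so that one-sided traces match the layer-upwind fluxes in \eqref{flux:diffusion:2d}, which makes most flux terms vanish identically; the surviving ``average minus one-sided'' contributions at coarse/fine transition lines are bounded by Lemma~\ref{lemma:sup:edge:2d}. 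In the corner regions $\dws,\dwn,\des,\den$, the vertex-edge-element projectors cancel both interior edge fluxes and the vertex traces, leaving only residuals of the form $\mathcal{D}^\pm_{ij,\cdot}(\eta_u,\cdot)$ which are bounded by \eqref{sup:L2:bilinear:P-}--\eqref{sup:L2:bilinear:P+} of Lemma~\ref{superapproximation:element}. The element reaction and $L^2$ terms $\dual{b\eta_u}{\xi_u}$ and $\sq^{-1}\dual{\eta_p}{\xi_p}+\sq^{-1}\dual{\eta_q}{\xi_q}$ are handled by Cauchy--Schwarz together with Lemma~\ref{lemma:app:2d}.

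The most delicate terms are the seam contributions at the four transition lines $x=\tau$, $x=1-\tau$, $y=\tau$, $y=1-\tau$ where the projector type changes and the flux choice switches. Here the relevant integrals take the form $\dual{(\eta_p)^+_{N/4,y}}{(\xi_u)^-_{N/4,y}}_{J_j}$ and its reflections. I would estimate each such term by Cauchy--Schwarz, using the $L^\infty$ bound \eqref{bound:ep:L0:2d} on $\eta_p$ to get $\norm{(\eta_p)^\pm}_{J_j}\le C\sq^{1/2}(N^{-1}\ln N)^{k+1}$ on the appropriate edges, and controlling $\norm{(\xi_u)^-_{N/4,y}}$ by Lemma~\ref{lemma:sup:bry}, which supplies the factor $\sqrt{\tau/\sq}\le C\sq^{-1/4}(\ln N)^{1/2}$. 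The product $\sq^{1/2}\cdot\sq^{-1/4}$ gives $\sq^{1/4}$ and the lemma also allows $C\sq^{1/4}(N^{-1}\ln N)^{k+1}$ to be absorbed as a constant plus $\enorm{\bm\xi}$-linear term.

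Collecting all estimates yields an inequality of the shape $\enorm{\bm\xi}^2\le C\sq^{1/4}\zeta(N)\enorm{\bm\xi}+C\sq^{1/2}\zeta(N)^2$, and Young's inequality then produces the supercloseness estimate~\eqref{super}. The main obstacle is the precise bookkeeping of $\sq$-powers at the transition seams: the $\sq^{-1}$ scaling of the auxiliary-variable terms in $B(\cdot;\cdot)$ must combine with $\sqrt{\tau/\sq}$ from Lemma~\ref{lemma:sup:bry} and the $O(\sq^{1/2})$ edge norm of $\eta_p$ in such a way that the net supercloseness factor is exactly $\sq^{1/4}$; any slippage in these powers would destroy the $\sq$-uniform balanced-norm bound that is the paper's main contribution. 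A subsidiary difficulty is the appearance of the term $\sq^{1/2}N^{-(k+1/2)}$ in the energy bound, which traces back to the $h^{1/2}$ trace factor combined with the $N^{-\sigma}$ tails in Lemma~\ref{lemma:app:2d} when $\sigma$ is chosen minimally as $\hat k+1$; these are the tolerated ``spillover'' terms that give the $N^{-(k+1/2)}$ contribution in the final bound.
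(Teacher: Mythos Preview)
Your outline is the paper's strategy, and you have identified the right lemmas for each subregion. Two places in the argument need sharper attention, however, and as stated your bookkeeping of $\sq$-powers would not close.

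First, for the element terms $\dual{b\eta_u}{\xi_u}$, $\sq^{-1}\dual{\eta_p}{\xi_p}$, $\sq^{-1}\dual{\eta_q}{\xi_q}$, a direct Cauchy--Schwarz over~$\Omega$ using Lemma~\ref{lemma:app:2d} does \emph{not} yield an $\sq^{1/4}$ prefactor: the coarse-region contributions $\|\eta_u\|_{\dreg}\le CN^{-(k+1)}$ and $\|\eta_p\|_{\dxb}\le C[\sq N^{-(k+1)}+\sq^{1/2}N^{-\sigma}]$ carry no (or insufficient) $\sq$-power, and after $\ba{\bm\xi}\le\sq^{-1/4}\enorm{\bm\xi}$ they blow up as $\sq\to0$. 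The paper first kills these terms by orthogonality: since $b$ is constant and $\prou|_{\dreg}=\Pi$, one has $\dual{b\eta_u}{\xi_u}_{\dreg}=0$; similarly $\dual{\eta_p}{\xi_p}_{\dxb}=\dual{\eta_q}{\xi_q}_{\dyb}=0$. Cauchy--Schwarz is then applied only on the layer regions, where the bounds of Lemma~\ref{lemma:app:2d} carry the needed $\sq^{1/4}$. This is exactly why the constant-$b$ hypothesis is made (and why Remark~\ref{remark:b:nonconstant} replaces $\Pi$ by a weighted $\Pi_b$ in the variable-coefficient case).

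Second, your seam analysis is only half of the story, and your attribution of the $\sq^{1/2}N^{-(k+1/2)}$ term is incorrect. The paper splits $\jump{\xi_u}_{N/4,y}=(\xi_u)^+_{N/4,y}-(\xi_u)^-_{N/4,y}$ and treats the two pieces differently. For the minus trace one uses Lemma~\ref{lemma:sup:bry} as you say, but note that $(\eta_p)^+_{N/4,y}$ is the trace from the \emph{coarse} side, so the relevant bound is $\|\eta_p\|_{L^\infty(\dxb)}\le C\sq^{1/2}N^{-(k+1)}$ from~\eqref{bound:ep:L0:2d}, without the $\ln N$ factor. For the plus trace, no analogue of Lemma~\ref{lemma:sup:bry} is available; instead an inverse inequality gives $\bigl(\sum_j\|(\xi_u)^+_{N/4,y}\|_{J_j}^2\bigr)^{1/2}\le CN^{1/2}\|\xi_u\|$, and combining with the coarse-side $\eta_p$ bound produces $C\sq^{1/2}N^{-(k+1)}\cdot N^{1/2}\enorm{\bm\xi}=C\sq^{1/2}N^{-(k+1/2)}\enorm{\bm\xi}$. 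That inverse-inequality loss of $N^{1/2}$ on the coarse side of the seam is the source of the $\sq^{1/2}N^{-(k+1/2)}$ term, not any $h^{1/2}$ trace factor or $N^{-\sigma}$ tail.
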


\begin{proof}
From the Galerkin orthogonality property 
\eqref{Galerkin:orthogonality} 
and the decomposition \eqref{error:decomposition:2d}, one has 
$B(\bm \xi;\vph)=B(\bm \eta;\vph)$ for all $\vph=(\vphu,\vphp,\vphq)\in \spc^3$.
Taking $\vph=\bm \xi$ and recalling \eqref{B:def:2d} and \eqref{energy:norm:2d}, one gets 
\begin{align}
\label{energy:equation:2d}
\enorm{\bm \xi}^2
&\;=B(\bm \xi;\bm \xi)= B(\bm \eta;\bm \xi):= \sum_{i=1}^8 T_i,
\end{align}
where
\begin{align*}
T_1&=\dual{b\eta_u}{\xi_u},\quad
T_2=\sq^{-1}\dual{\eta_p}{\xi_p},\quad
T_3=\sq^{-1}\dual{\eta_q}{\xi_q},\\
T_4&=\dual{\eta_{u}}{(\xi_p)_x}
+\sum_{j=1}^{N}\sum_{i=1}^{N-1}
\dual{(\widehat{\eta_u})_{i,y}}{\jump{\xi_p}_{i,y}}_{J_j},
\;
T_5=\dual{\eta_{u}}{(\xi_q)_y}+
\sum_{i=1}^{N}\sum_{j=1}^{N-1}\dual{(\widehat{\eta_u})_{x,j}}{\jump{\xi_q}_{x,j}}_{I_i},\\
T_6&=\dual{\eta_p}{(\xi_u)_x}
+\sum_{j=1}^{N}\sum_{i=0}^{N}
\dual{(\widehat{\eta_p})_{i,y}}{\jump{\xi_u}_{i,y}}_{J_j},
\;
T_7=\dual{\eta_q}{(\xi_u)_y}
+\sum_{i=1}^{N}\sum_{j=0}^{N}
\dual{(\widehat{\eta_q})_{x,j}}{\jump{\xi_u}_{x,j}}_{I_i}.
\end{align*}
Here the ``hat" terms are specified in~\eqref{flux:diffusion:2d}.

By hypothesis $b$ is constant, so 
$\dual{b\eta_{u}}{\xi_{u}}_{\dreg}=b\dual{(u-\Pi u)}{\xi_{u}}_{\dreg}=0$ from~\eqref{L2:prj:2d}.
Thus, a Cauchy-Schwarz inequality and inequality~\eqref{bound:eu:L2:2d} 
of Lemma~\ref{lemma:app:2d} yield
\begin{align}\label{T1}
|T_1|
&=|\dual{b\eta_{u}}{\xi_{u}}_{\Omega\backslash\dreg}|
\leq C\norm{\eta_{u}}_{\Omega\backslash\dreg}\norm{\xi_{u}}
\leq C\sq^{1/4} (N^{-1}\ln N)^{k+1}\enorm{\bm\xi}.
\end{align} 
Similarly, from inequalities \eqref{bound:ep:L2:2d} and \eqref{bound:eq:L2:2d} 
of Lemma~\ref{lemma:app:2d} one has
\begin{align}\label{T2}
|T_2|
&=|\sq^{-1}\dual{\eta_{p}}{\xi_{p}}_{\dxa\cup\dxc}|
\leq \sq^{-1}\norm{\eta_{p}}_{\dxa\cup\dxc}\norm{\xi_{p}}
\leq C\sq^{1/4} (N^{-1}\ln N)^{k+1}\enorm{\bm\xi}
\end{align} 
and
\begin{align}\label{T3}
|T_3|
&=|\sq^{-1}\dual{\eta_{q}}{\xi_{q}}_{\dya\cup\dyc}|
\leq \sq^{-1}\norm{\eta_{q}}_{\dya\cup\dyc}\norm{\xi_{q}}
\leq C\sq^{1/4} (N^{-1}\ln N)^{k+1}\enorm{\bm\xi}.
\end{align} 

In Lemma~\ref{lem:T4} we shall show that 
\begin{equation}\label{T4}
|T_4| \leq C\Big[\sq^{1/4}(N^{-1}\ln N)^{k+1}+\sq^{1/2}N^{-\hat{k}}
		+\sq^{3/4}N^{-k}(\ln N)^{k+1}\Big]\enorm{\bm\xi}.
\end{equation}
A similar argument yields the same bound for $T_5$.

The definitions \eqref{L2:prj:2d}, \eqref{Pi:x:+}, \eqref{Pi:x:-}
and \eqref{flux:diffusion:p:2d}  give
\begin{align*}
\dual{p-\Pi_x^{-} p}{\vphu_x}_{K_{ij}}&=\dual{p-\Pi_x^{+} p}{\vphu_x}_{K_{ij}}=\dual{p-\Pi p}{\vphu_x}_{K_{ij}}=0 
\quad \forall K_{ij}\in \Omega_N;
\\
\dual{(\widehat{\eta_p})_{i,y}}{\jump{\vphu}_{i,y}}_{J_{j}}
&=\dual{(p-\Pi_x^+ p)^{+}_{i,y}}{\jump{\vphu}_{i,y}}_{J_{j}}=0,\;
i=0,1,\dots,N/4-1 \text{ and } j=1,2,\dots,N;
\\
\dual{(\widehat{\eta_p})_{i,y}}{\jump{\vphu}_{i,y}}_{J_{j}}
&=\dual{(p-\Pi_x^- p)^{-}_{i,y}}{\jump{\vphu}_{i,y}}_{J_{j}}=0,\;
i=3N/4+1,\dots,N \text{ and } j=1,2,\dots,N
\end{align*}
for any $\vphu\in \mathcal{Q}^{k}(K_{ij})$. But $\eta_p=p-\prop p$,
so recalling \eqref{prj:u:2d} and \eqref{flux:diffusion:p:2d} we see that
\begin{align*}
T_6 &= \sum_{j=1}^{N}
\dual{(\eta_p)_{N/4,y}^+}{\jump{\xi_u}_{N/4,y}}_{J_j}
+\sum_{j=1}^{N}
\dual{(\eta_p)_{3N/4,y}^-}{\jump{\xi_u}_{3N/4,y}}_{J_j} \\
&\qquad +\sum_{j=1}^{N}\sum_{i=N/4+1}^{3N/4-1}\dual{\average{\eta_p}_{i,y}}{\jump{\xi_u}_{i,y}}_{J_j}.
\end{align*}
For the first term here, one has
\begin{align}\label{Two:terms}
\sum_{j=1}^{N}\dual{(\eta_p)_{N/4,y}^+}{\jump{\xi_u}_{N/4,y}}_{J_j}
=\sum_{j=1}^{N}\dual{(\eta_p)_{N/4,y}^+}{(\xi_u)^+_{N/4,y}}_{J_j}
-\sum_{j=1}^{N}\dual{(\eta_p)_{N/4,y}^+}{(\xi_u)^-_{N/4,y}}_{J_j}.
\end{align}
A  Cauchy-Schwarz inequality, an inverse inequality and \eqref{bound:ep:L0:2d} give
\begin{align}
\left|\sum_{j=1}^{N}\dual{(\eta_p)_{N/4,y}^+}{(\xi_u)^+_{N/4,y}}_{J_j}\right|
&\leq  \left(\sum_{j=1}^{N}\norm{(\eta_p)_{N/4,y}^+}^2_{J_j}\right)^{1/2}
       \left(\sum_{j=1}^{N}\norm{(\xi_u)_{N/4,y}^+}^2_{J_j}\right)^{1/2}
\nonumber\\
\label{T61}
&\leq C\norm{\eta_p}_{L^{\infty}(\dxb)}N^{1/2}\norm{\xi_u}
 \leq C\sq^{1/2}N^{-(k+1/2)}\enorm{\bm\xi}.
\end{align}
Another Cauchy-Schwarz inequality, inequality \eqref{bry:L} of Lemma~\ref{lemma:sup:bry} and \eqref{bound:ep:L0:2d} 
show that
\begin{align}
\left|\sum_{j=1}^{N}\dual{(\eta_p)_{N/4,y}^+}{(\xi_u)^-_{N/4,y}}_{J_j}\right| 
&\leq
\left(\sum_{j=1}^{N}\norm{(\eta_p)_{N/4,y}^+}^2_{J_j}\right)^{1/2}
\left(\sum_{j=1}^{N}\norm{(\xi_u)_{N/4,y}^-}^2_{J_j}\right)^{1/2}
\nonumber\\
&\leq C\norm{\eta_p}_{L^{\infty}(\dxb)}\sqrt{\frac{\tau}{\sq}}
\Big[\enorm{\bm\xi}+\sq^{1/4}(N^{-1}\ln N)^{k+1}\Big]
\nonumber\\
\label{T62}
&\leq C\sq^{1/4}N^{-(k+1)}(\ln N)^{1/2}\enorm{\bm\xi}
+C\sq^{1/2}N^{-2(k+1)}(\ln N)^{k+3/2}.
\end{align}
From \eqref{Two:terms}, \eqref{T61}, \eqref{T62} and a triangle inequality, we obtain
\begin{align}
\left|\sum_{j=1}^{N}\dual{(\eta_p)_{N/4,y}^+}{\jump{\xi_u}_{N/4,y}}_{J_j}\right|
&\leq
C\Big[\sq^{1/2}N^{-(k+1/2)}+\sq^{1/4}N^{-(k+1)}(\ln N)^{1/2}\Big]
\enorm{\bm\xi}
\nonumber\\
\label{inner:product:L}
&\qquad\qquad +C\sq^{1/2}N^{-2(k+1)}(\ln N)^{k+3/2}.
\end{align}
Analogously, one has
\begin{align}
\left|\sum_{j=1}^{N}\dual{(\eta_p)_{3N/4,y}^-}{\jump{\xi_u}_{3N/4,y}}_{J_j}\right|
&\leq
C\Big[\sq^{1/2}N^{-(k+1/2)}+\sq^{1/4}N^{-(k+1)}(\ln N)^{1/2}\Big]
\enorm{\bm\xi}
\nonumber\\
\label{inner:product:R}
&\qquad\qquad
+C\sq^{1/2}N^{-2(k+1)}(\ln N)^{k+3/2}.
\end{align}
The inequality \eqref{sup:L2:2d:p} of Lemma \ref{lemma:sup:L2:2d} gives
\begin{align}\label{T63}
\left|\sum_{j=1}^{N}\sum_{i=N/4+1}^{3N/4-1}\dual{\average{\eta_p}_{i,y}}{\jump{\xi_u}_{i,y}}_{J_j}\right|
\leq  C\sq^{1/2}N^{-\hat{k}} \norm{\xi_u}
\leq  C\sq^{1/2}N^{-\hat{k}}\enorm{\bm\xi}. 
\end{align}
Adding \eqref{inner:product:L}, \eqref{inner:product:R} and \eqref{T63}, we get
\begin{align}\label{T6}
|T_6|&\leq
C\Big[\sq^{1/2}N^{-(k+1/2)}+\sq^{1/4}N^{-(k+1)}(\ln N)^{1/2}
+\sq^{1/2}N^{-\hat{k}}\Big]\enorm{\bm\xi}
\nonumber\\
&\qquad +C\sq^{1/2}N^{-2(k+1)}(\ln N)^{k+3/2}.
\end{align}
In a similar fashion, one can derive the same bound as~\eqref{T6}  for $T_7$.

Substituting \eqref{T1}--\eqref{T4} and~\eqref{T6} into \eqref{energy:equation:2d}
and using Young's inequality, we see that
\begin{align*}
\enorm{\bm\xi}^2\leq \frac12 \enorm{\bm\xi}^2
	&+ C\Big[\sq^{1/2}(N^{-1}\ln N)^{2(k+1)}
	 + \sq N^{-(2k+1)} +\sq N^{-2\hat{k}}  \\
	&\qquad +\sq^{3/2}N^{-2k}(\ln N)^{2(k+1)}\Big],
\end{align*}
so 
\begin{equation}\label{xisuperclose}
	\enorm{\bm\xi} \leq 
	C\sq^{1/4}\Big[(N^{-1}\ln N)^{k+1}+ \sq^{1/4} N^{-(k+1/2)}
	+\sq^{1/4} N^{-\hat{k}}+\sq^{1/2}N^{-k}(\ln N)^{k+1}\Big].
\end{equation}
Then the trivial inequality 	$\ba{\bm\xi}\leq C\sq^{-1/4}\enorm{\bm\xi}$ yields 
\[
\ba{\bm\xi} \leq C\Big[(N^{-1}\ln N)^{k+1}+ \sq^{1/4} N^{-(k+1/2)}
	+\sq^{1/4} N^{-\hat{k}}+\sq^{1/2}N^{-k}(\ln N)^{k+1}\Big].
\]

Lemma~\ref{lemma:app:2d} yields
$\enorm{\bm \eta}
\leq C\big[\sq^{-1/2}(\norm{\eta_p}+\norm{\eta_q})+\norm{\eta_u}\big]
\leq C\big[\sq^{1/4}(N^{-1}\ln N)^{k+1}+N^{-(k+1)}\big]$
and 
$\ba{\bm \eta}
\leq C\big[\sq^{-3/4}(\norm{\eta_p}+\norm{\eta_q})+\norm{\eta_u}\big]
\leq C(N^{-1}\ln N)^{k+1}$
as $\sigma\geq \hat{k}+1\geq k+1$.
Thus, the decomposition $\bm e=\bm \eta-\bm \xi$ and the above bounds on $\bm\xi$ and $\bm\eta$ yield
\begin{align}
\enorm{\bm e}
& \leq C\Big[\sq^{1/4}(N^{-1}\ln N)^{k+1}+\sq^{1/2}N^{-(k+1/2)}+\sq^{1/2} N^{-\hat{k}} \notag\\
&\qquad +\sq^{3/4}N^{-k}(\ln N)^{k+1}+N^{-(k+1)}\Big]  \notag 
\intertext{and}
\ba{\bm e} 
&\leq C\Big[(N^{-1}\ln N)^{k+1}+\sq^{1/4}N^{-(k+1/2)}+\sq^{1/4} N^{-\hat{k}}
	+\sq^{1/2}N^{-k}(\ln N)^{k+1}\Big],   \notag 
\end{align}
completing the proof of Theorem~\ref{thm:2d}.
\end{proof}

The following estimate was used above.

\begin{lemma}\label{lem:T4}
In the proof of Theorem~\ref{thm:2d}, one has
\[
|T_4| \leq C\Big[\sq^{1/4}(N^{-1}\ln N)^{k+1}+\sq^{1/2}N^{-\hat{k}}
		+\sq^{3/4}N^{-k}(\ln N)^{k+1}\Big]\enorm{\bm\xi}.
\]
\end{lemma}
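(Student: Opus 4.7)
The plan is to decompose $T_4$ cell by cell, exploiting the column-strip structure of the Shishkin mesh. Integrating by parts within each element, one can write
\begin{align*}
T_4 = \sum_{i=1}^{N}\sum_{j=1}^{N}\Big[\dual{\eta_u}{(\xi_p)_x}_{K_{ij}}
 - \dual{\widehat{\eta_u}_{i,y}}{(\xi_p)^-_{i,y}}_{J_j}
 + \dual{\widehat{\eta_u}_{i-1,y}}{(\xi_p)^+_{i-1,y}}_{J_j}\Big]
\end{align*}
under the boundary convention $\widehat{\eta_u}_{0,y}=\widehat{\eta_u}_{N,y}=0$. For cells in the left strip $\Omega_1^x$ ($i\in\{1,\dots,N/4\}$) the flux is $\widehat{\eta_u}=\eta_u^{-}$ on both vertical edges, so the summand is exactly $\mathcal{D}^{-}_{ij,x}(\eta_u,\xi_p)$; symmetrically, in the right strip $\Omega_3^x$ it is $\mathcal{D}^{+}_{ij,x}(\eta_u,\xi_p)$.

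For cells in $\dw$ where $\prou=\Pi_x^{-}$, I plan to show that all three pieces of $\mathcal{D}^{-}_{ij,x}(\eta_u,\xi_p)$ vanish: the volume term vanishes by \eqref{Pi:x:-} since $(\xi_p)_x\in \mathcal{P}^{k-1}(I_i)\otimes \mathcal{P}^{k}(J_j)$, the right-edge term vanishes by the Gauss-Radau edge condition applied on $K_{ij}$, and the left-edge term vanishes by the same edge condition applied on the neighbour $K_{i-1,j}\in\dw$ (the case $i=1$ being absorbed into the convention). For the remaining cells $\dws\cup\dwn$ I invoke Lemma~\ref{superapproximation:element} bound \eqref{sup:L2:bilinear:P-}; Cauchy-Schwarz over cells together with $\norm{\xi_p}\le\sq^{1/2}\enorm{\bm\xi}$ yields
\[
\Big|\sum_{K_{ij}\in\Omega_1^x}\mathcal{D}^{-}_{ij,x}(\eta_u,\xi_p)\Big|
\le C\sq^{-1/4}(N^{-1}\ln N)^{k+1}\norm{\xi_p}
\le C\sq^{1/4}(N^{-1}\ln N)^{k+1}\enorm{\bm\xi}.
\]
A mirror-image argument using \eqref{sup:L2:bilinear:P+} and the vanishing of $\mathcal{D}^{+}_{ij,x}$ on $\de$ produces the same bound on $\Omega_3^x$.

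The middle strip $\Omega_2^x$ is the delicate part. Here I split $T_{4M}=V_M+E_M+\mathcal{T}$, where $V_M$ collects the element volume integrals, $E_M=\sum_j\sum_{i=N/4+1}^{3N/4-1}\dual{\average{\eta_u}_{i,y}}{\jump{\xi_p}_{i,y}}_{J_j}$ is the sum of interior average-flux edge terms, and $\mathcal{T}$ consists of the two transition integrals $\sum_j\dual{\eta_u^{-}_{N/4,y}}{(\xi_p)^{+}_{N/4,y}}_{J_j}-\sum_j\dual{\eta_u^{+}_{3N/4,y}}{(\xi_p)^{-}_{3N/4,y}}_{J_j}$ arising because the flux type switches at $i=N/4$ and $i=3N/4$. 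The $\dreg$-part of $V_M$ vanishes by the full $L^2$-orthogonality of $\Pi$; for the $\ds\cup\dn$-part, a direct Cauchy-Schwarz combined with the inverse inequality $\norm{(\xi_p)_x}_{\ds}\le CH^{-1}\norm{\xi_p}_{\ds}$ and the $L^2$-bound \eqref{bound:eu:L2:2d} gives $|V_M^{\ds}|+|V_M^{\dn}|\le C\sq^{3/4}N^{-k}(\ln N)^{k+1}\enorm{\bm\xi}$. I will bound $E_M$ by splitting according to the $y$-row: Lemma~\ref{lemma:sup:L2:2d} \eqref{sup:L2:2d:u:x} handles the rows in $\dreg$ and contributes $C\sq^{1/2}N^{-\hat{k}}\enorm{\bm\xi}$, whereas Lemma~\ref{lemma:sup:edge:2d} bounds \eqref{sup:Pi:2d:y:-} and \eqref{sup:Pi:2d:y:+} handle the rows in $\ds$ and $\dn$ and each contribute $C\sq^{3/4}N^{-k}(\ln N)^{k+1}\enorm{\bm\xi}$.

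Finally, the transition set $\mathcal{T}$ needs a row-by-row analysis. On middle-$y$ rows $j\in\{N/4+1,\dots,3N/4\}$ the neighbour $K_{N/4,j}\in\dw$ uses $\Pi_x^{-}$, so $(\eta_u)^{-}_{N/4,y}=(I-\pi_y)u^{-}_{N/4,y}$ is $L^2(J_j)$-orthogonal to $(\xi_p)^{+}_{N/4,y}\in\mathcal{P}^k(J_j)$ and the contribution is zero. On the corner-$y$ rows $j\in\{1,\dots,N/4\}\cup\{3N/4+1,\dots,N\}$ I apply Cauchy-Schwarz; one factor is controlled by the $L^{\infty}$-bound \eqref{bound:eu:L0:2d} on $\Omega\setminus\dreg$ (giving $\sq^{1/4}(\ln N)^{1/2}(N^{-1}\ln N)^{k+1}$ after summing over the $N/4$ corner rows), while the other factor is controlled by a discrete trace inequality on the cells $K_{N/4+1,j}$ of width~$H$ (giving $N^{1/2}\sq^{1/2}\enorm{\bm\xi}$). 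The product is $C\sq^{3/4}N^{-(k+1/2)}(\ln N)^{k+3/2}\enorm{\bm\xi}$, which---because $N^{-1/2}(\ln N)^{1/2}\le 1$ for $N\ge 4$---is absorbed into $C\sq^{3/4}N^{-k}(\ln N)^{k+1}\enorm{\bm\xi}$; the transition at $i=3N/4$ is treated symmetrically. Collecting all contributions yields the claimed bound. The main obstacle is $V_M^{\ds}$: the projector $\Pi_y^{-}$ is orthogonal to $\mathcal{P}^k(I_i)\otimes\mathcal{P}^{k-1}(J_j)$, whereas $(\xi_p)_x$ sits in $\mathcal{P}^{k-1}(I_i)\otimes\mathcal{P}^k(J_j)$, i.e.\ in the ``wrong'' tensor direction; no super-cancellation is available, and the crude inverse-inequality estimate is precisely what produces the limiting $\sq^{3/4}N^{-k}(\ln N)^{k+1}$ term in the final rate of Theorem~\ref{thm:2d}.
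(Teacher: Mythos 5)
Your proposal is correct and follows essentially the same route as the paper's proof of Lemma~5.2: the same decomposition of $T_4$ into element-wise bilinear forms, the same invocation of Lemma~\ref{superapproximation:element} on the corner blocks, the same exact cancellations on $\dw$, $\de$ and $\dreg$ via the Gauss-Radau and $L^2$ orthogonalities, the same use of Lemmas~\ref{lemma:sup:L2:2d} and~\ref{lemma:sup:edge:2d} for the average-flux edge sums, and the same Cauchy--Schwarz/inverse-inequality treatment of the volume and transition terms in $\ds\cup\dn$, finishing with $\norm{\xi_p}\le\sq^{1/2}\enorm{\bm\xi}$. The only difference is organisational (grouping by vertical strips and term type rather than by the nine subregions $T_{41},\dots,T_{49}$), and your closing observation about which term is the bottleneck is accurate.
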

\begin{proof}
We write 
\begin{align*}
T_4 =\sum_{j=1}^{N}\sum_{i=1}^{N} 
\Big[
\dual{\eta_u}{(\xi_p)_x}_{K_{ij}}
-\dual{(\widehat{\eta_u})_{i,y}}{(\xi_p)^{-}_{i,y}}_{J_{j}}  
+\dual{(\widehat{\eta_u})_{i-1,y}}{(\xi_p)^{+}_{i-1,y}}_{J_{j}}
\Big]
\end{align*}
where $(\widehat{\eta_u})_{i,y} (i=0,1,\dots,N)$ is specified by \eqref{flux:diffusion:2d} with the special definitions
$(\widehat{\eta_u})_{0,y}=(\widehat{\eta_u})_{N,y}=0$.
Divide this sum into nine separate sums
according to the different definitions 
of~$\prou$ in~\eqref{prj:u:2d} over nine subregions, viz.,
$T_4:=\sum_{i=1}^9 T_{4i}$,
where 
\begin{align*}
T_{41}&=\sum_{j=1}^{N/4}\sum_{i=1}^{N/4}\mathcal{D}^-_{ij,x}(u-\Pi_{xy}^{-} u,\xi_p), \quad
T_{42}=\sum_{j=N/4+1}^{3N/4}\sum_{i=1}^{N/4}\mathcal{D}^-_{ij,x}(u-\Pi_{x}^{-} u,\xi_p), \\
T_{43}&=\sum_{j=3N/4+1}^{N}\sum_{i=1}^{N/4}\mathcal{D}^{-}_{ij,x}(u-\Pi_{xy}^{\mp} u,\xi_p),
\\
T_{44}&=\dual{u-\Pi_y^{-} u}{(\xi_{p})_{x}}_{\ds}
+\sum_{j=1}^{N/4}\sum_{i=N/4+1}^{3N/4-1}\dual{\average{u-\Pi_y^{-} u}_{i,y}}{\jump{\xi_p}_{i,y}}_{J_j}
\\
&\quad 
+\sum_{j=1}^{N/4}\dual{(u-\Pi^-_{xy}u)^-_{N/4,y}}{(\xi_p)^+_{N/4,y}}_{J_j}
-\sum_{j=1}^{N/4}\dual{(u-\Pi^{\pm}_{xy}u)^+_{3N/4,y}}{(\xi_p)^-_{3N/4+1,y}}_{J_j},
\\
T_{45}&=\dual{u-\Pi u}{(\xi_{p})_{x}}_{\dreg}
+\sum_{j=N/4+1}^{3N/4}\sum_{i=N/4+1}^{3N/4-1}
\dual{\average{u-\Pi u}_{i,y}}{\jump{\xi_p}_{i,y}}_{J_j}
\\
&\quad 
+\sum_{j=N/4+1}^{3N/4}\dual{(u-\Pi^-_{x}u)^-_{N/4,y}}{(\xi_p)^+_{N/4,y}}_{J_j}
-\sum_{j=N/4+1}^{3N/4}\dual{(u-\Pi^{+}_{x}u)^+_{3N/4,y}}{(\xi_p)^-_{3N/4+1,y}}_{J_j},
\\
T_{46}&=\dual{u-\Pi_y^{+} u}{(\xi_{p})_{x}}_{\dn}
+\sum_{j=3N/4+1}^{N}\sum_{i=N/4+1}^{3N/4-1}\dual{\average{u-\Pi_y^+ u}_{i,y}}{\jump{\xi_p}_{i,y}}_{J_j}
\\
&\quad 
+\sum_{j=3N/4+1}^{N}\dual{(u-\Pi^{\mp}_{xy}u)^-_{N/4,y}}{(\xi_p)^+_{N/4,y}}_{J_j}
-\sum_{j=3N/4+1}^{N}\dual{(u-\Pi^{+}_{xy}u)^+_{3N/4,y}}{(\xi_p)^-_{3N/4+1,y}}_{J_j}
,
\\
T_{47}&=\sum_{j=1}^{N/4}\sum_{i=3N/4+1}^{N}\mathcal{D}^+_{ij,x}(u-\Pi_{xy}^{\pm} u,\xi_p), \quad
T_{48}=
\sum_{j=N/4+1}^{3N/4}\sum_{i=3N/4+1}^{N}\mathcal{D}^+_{ij,x}(u-\Pi_{x}^{+} u,\xi_p),
\\
T_{49}&=\sum_{j=3N/4+1}^{N}\sum_{i=3N/4+1}^{N}\mathcal{D}^{+}_{ij,x}(u-\Pi_{xy}^{+} u,\xi_p).
\end{align*}

A Cauchy-Schwarz inequality and Lemma~\ref{superapproximation:element} yield
\begin{align*}
|T_{41}|&\leq \left[\sum_{K_{ij}\in \dws}
\Bigg(\frac{|\mathcal{D}^-_{ij,x}(u-\Pi_{xy}^{-} u,\xi_p)|}{\norm{\xi_p}_{K_{ij}}}
\Bigg)^2\right]^{1/2}\norm{\xi_p}_{\dws}
\leq C\sq^{-1/4}(N^{-1}\ln N)^{k+1}\norm{\xi_p},\\
|T_{43}|&\leq \left[\sum_{K_{ij}\in \dwn}
\Bigg(\frac{|\mathcal{D}^-_{ij,x}(u-\Pi_{xy}^{\mp} u,\xi_p)|}{\norm{\xi_p}_{K_{ij}}}
\Bigg)^2\right]^{1/2}\norm{\xi_p}_{\dwn}
\leq C\sq^{-1/4}(N^{-1}\ln N)^{k+1}\norm{\xi_p},\\
|T_{47}|&\leq \left[\sum_{K_{ij}\in \des}
\Bigg(\frac{|\mathcal{D}^+_{ij,x}(u-\Pi_{xy}^{\pm} u,\xi_p)|}{\norm{\xi_p}_{K_{ij}}}
\Bigg)^2\right]^{1/2}\norm{\xi_p}_{\des}
\leq C\sq^{-1/4}(N^{-1}\ln N)^{k+1}\norm{\xi_p},\\
|T_{49}|&\leq \left[\sum_{K_{ij}\in \den}
\Bigg(\frac{|\mathcal{D}^+_{ij,x}(u-\Pi_{xy}^{+} u,\xi_p)|}{\norm{\xi_p}_{K_{ij}}}
\Bigg)^2\right]^{1/2}\norm{\xi_p}_{\den}
\leq C\sq^{-1/4}(N^{-1}\ln N)^{k+1}\norm{\xi_p}.
\end{align*}

The definition \eqref{Pi:x:-} gives immediately
\[
\dual{u-\Pi_x^{-} u}{\vphu_x}_{K_{ij}}=0
\text{ and  } \dual{(u-\Pi_x^{-} u)^{-}_{i,y}}{\vphu_{i,y}}_{J_{j}}=0
\]
for any $\vphu\in \mathcal{Q}^{k}(K_{ij})$ and $i,j=1,2,\dots,N$.
Hence
\begin{align*}
T_{42}&=\sum_{j=N/4+1}^{3N/4}\sum_{i=1}^{N/4}
\Big[
\dual{u-\Pi_x^{-} u}{(\xi_p)_x}_{K_{ij}}
-\dual{(u-\Pi_x^{-} u)^{-}_{i,y}}{(\xi_p)^{-}_{i,y}}_{J_{j}}
\nonumber\\
&\qquad
+\dual{(u-\Pi_x^{-} u)^{-}_{i-1,y}}{(\xi_p)^{+}_{i-1,y}}_{J_{j}}
\Big]=0,
\end{align*}
where $(u-\Pi_x^{-} u)^{-}_{0,y} := \widehat{(u-\Pi_x^{-} u)}_{0,y} =0$.
Similarly, $T_{48}=0$.

A Cauchy-Schwarz inequality, an inverse inequality and \eqref{bound:eu:L2:2d} 
show that
\begin{align}
|\dual{u-\Pi_y^{-} u}{(\xi_{p})_{x}}_{\ds}|
&\leq CN\norm{u-\Pi_y^- u}_{\ds} \norm{\xi_p}_{\ds}\nonumber\\
\label{ineq:1}
&\leq C\sq^{1/4}N^{-k}(\ln N)^{k+1}\norm{\xi_p}_{\ds}.
\end{align}
The inequality \eqref{sup:Pi:2d:y:-} gives
\begin{align}\label{ineq:2}
\left|\sum_{j=1}^{N/4}\sum_{i=N/4+1}^{3N/4-1}
\dual{\average{u-\Pi_y^{-} u}_{i,y}}{\jump{\xi_p}_{i,y}}_{J_j}\right|
\leq  C\sq^{1/4}N^{-k}(\ln N)^{k+1}\norm{\xi_p}_{\ds}.
\end{align}
Using the Cauchy-Schwarz and inverse inequalities and \eqref{bound:eu:L0:2d} yields
\begin{align}
\left|\sum_{j=1}^{N/4}\dual{(u-\Pi^-_{xy}u)^-_{N/4,y}}{(\xi_p)^+_{N/4,y}}_{J_j}\right|
&\leq \sum_{j=1}^{N/4}\norm{(u-\Pi^-_{xy}u)^-_{N/4,y}}_{J_j}
\norm{(\xi_p)^+_{N/4,y}}_{J_j}
\nonumber\\
&\leq C\sum_{j=1}^{N/4} h_j^{1/2}\norm{\eta_u}_{L^{\infty}(K_{N/4,j})} N^{1/2}
\norm{\xi_p}_{K_{N/4+1,j}}
\nonumber\\
&\leq C\tau^{1/2}N^{1/2}\norm{\eta_u}_{L^{\infty}(\dws)}\norm{\xi_p}_{\ds}
\nonumber\\
\label{ineq:3}
&\leq
C\sq^{1/4}N^{-k}(\ln N)^{k+1}\norm{\xi_p}_{\ds},
\end{align}
where $N^{-1/2}(\ln N)^{1/2}\leq C$ was used for $N\geq 4$. Similarly, 
\begin{align}\label{ineq:4}
\left|\sum_{j=1}^{N/4}\dual{(u-\Pi^{\pm}_{xy}u)^+_{3N/4,y}}{(\xi_p)^-_{3N/4+1,y}}_{J_j}\right|
\leq C\sq^{1/4}N^{-k}(\ln N)^{k+1}\norm{\xi_p}_{\ds}.
\end{align}
Combining \eqref{ineq:1}--\eqref{ineq:4} yields
\begin{align}
|T_{44}|\leq C\sq^{1/4}N^{-k}(\ln N)^{k+1}\norm{\xi_p}_{\ds}.\label{T44}\nonumber
\end{align}

Similarly, 
\begin{align*}
|T_{46}|&\leq C\sq^{1/4}N^{-k}(\ln N)^{k+1}\norm{\xi_p}_{\dn}.
\end{align*}

Appealing first to the definition \eqref{2d:projectors} of the $L^2$-projector $\Pi$ and the Gauss-Radau projectors $\Pi^{-}_{x}$ and $\Pi^{+}_{x}$, 
we use \eqref{sup:L2:2d:u:x} to get
\begin{align*}
|T_{45}|=\left|\sum_{j=N/4+1}^{3N/4}\sum_{i=N/4+1}^{3N/4-1}
\dual{\average{u-\Pi u}_{i,y}}{\jump{\xi_p}_{i,y}}_{J_j}\right|
\leq C N^{-\hat{k}} \norm{\xi_p}.
\end{align*}

Gathering the estimates of the $T_{4i}$ for $i=1,2,\dots,9$, 
the lemma follows from
$\norm{\xi_p}\leq \sq^{1/2}\enorm{\bm\xi}$.
\end{proof}

\begin{remark}\label{rmk:thm}
When analysing numerical methods for solving \eqref{spp:R-D:2d}, many authors 
(e.g., \cite{Armentano2023,LiNavon1998,Liu2009}) assume that $\sq^{1/2}\leq N^{-1}$ 
since this is usually true in practice.
With this assumption, the balanced-norm bound of Theorem~\ref{thm:2d} simplifies to
\begin{align*}
&\ba{\bm w -\wph}
\leq C
\begin{cases}
(N^{-1}\ln N)^{k+1} &\text{for $k$ even};
\\
(N^{-1}\ln N)^{k+1} &\text{for $k$ odd and }
\\ 
&\text{$\sq^{1/4}\leq N^{-1}(\ln N)^{k+1}$};
\\
(N^{-1}\ln N)^{k+1}+N^{-(k+1/2)}
&\text{for $k$ odd and}
\\
&\text{$\sq^{1/4}\geq N^{-1}(\ln N)^{k+1}$}.
\end{cases}
\end{align*}  
That is, our LDG method achieves  $O((N^{-1}\ln N)^{k+1})$ convergence in the balanced norm
except when $k$ is odd and $\sq^{1/4}\geq N^{-1}(\ln N)^{k+1}$.
But this inequality, combined with our earlier mild assumption~\eqref{mild:condition:epsilon}, 
says that $N^{-1}(\ln N)^{k+1}\leq \sq^{1/4}\leq \beta^{1/2}(4\sigma\ln N)^{-1/2}$.
In Table~\ref{table:N-ep-N}, which lists some typical values of these quantities, 
only for $k=1$ are there entries (in bold font) with 
$N^{-1}(\ln N)^{k+1}\leq \beta^{1/2}(4\sigma\ln N)^{-1/2}$. 
Similarly, the errors and observed rates of convergence in Table~\ref{table:ex1:balanced}
depart from $O((N^{-1}\ln N)^{k+1})$ convergence only in two entries (in bold font);
in these entries $\sq = 10^{-4}$ and $N= 256, 512$ so one has $N^{-1} < \sq^{1/2}$,
which is generally regarded as not lying in  the singularly perturbed regime when solving 
\eqref{spp:R-D:2d} numerically. In other words, in practice 
the exception mentioned above is unlikely to occur and
one usually expects to see a 
balanced-norm $O((N^{-1}\ln N)^{k+1})$ convergence rate 
when using our LDG method to solve singularly perturbed reaction-diffusion problems. 
\end{remark}

\begin{table}[ht]
	\setlength{\tabcolsep}{2.5pt}
	\footnotesize
	\centering
	\caption{Two quantities $N^{-1}(\ln N)^{k+1}$ and $\beta^{1/2}(4\sigma\ln N)^{-1/2}$, where $\beta=1, \sigma=k+1$.}
	\label{table:N-ep-N}
	\begin{tabular}{ccccc ccccc ccc}
		\toprule
		& &\multicolumn{3}{c}{$k=1$} & &\multicolumn{3}{c}{$k=3$} & & \multicolumn{3}{c}{$k=5$} \\
		\cmidrule(r){3-6} \cmidrule(r){7-10}  \cmidrule(r){11-13} 
		$N$ & & $N^{-1}(\ln N)^2$ & & $(8 \ln N)^{-1/2}$ & & $N^{-1}(\ln N)^4$ & & $(16 \ln N)^{-1/2}$ & & $N^{-1}(\ln N)^6$ & & $(24 \ln N)^{-1/2}$ \\
		\midrule
		8&  & 5.4051E-01 &  & 2.4518E-01 &  & 2.3372E+00 &  & 1.7337E-01 &  & 1.0106E+01 &  & 1.4155E-01 \\
		16&  & 4.8045E-01 &  & 2.1233E-01 &  & 3.6934E+00 &  & 1.5014E-01 &  & 2.8392E+01 &  & 1.2259E-01 \\
		32&  & 3.7535E-01 &  & 1.8991E-01 &  & 4.5085E+00 &  & 1.3429E-01 &  & 5.4153E+01 &  & 1.0965E-01 \\
		64&  & 2.7025E-01 &  & 1.7337E-01 &  & 4.6744E+00 &  & 1.2259E-01 &  & 8.0850E+01 &  & 1.0009E-01 \\
		128&  & 1.8392E-01 &  & 1.6051E-01 &  & 4.3300E+00 &  & 1.1350E-01 &  & 1.0194E+02 &  & 9.2669E-02 \\
		256&  & \bf{1.2011E-01} &  & \bf{1.5014E-01} &  & 3.6934E+00 &  & 1.0617E-01 &  & 1.1357E+02 &  & 8.6684E-02 \\
		512&  & \bf{7.6009E-02} &  & \bf{1.4155E-01} &  & 2.9580E+00 &  & 1.0009E-01 &  & 1.1512E+02 &  & 8.1726E-02
		\\
		\bottomrule
	\end{tabular}
\end{table}

\begin{remark}\label{rmk:energy:error}
With the commonly-used assumption $\sq^{1/2}\leq N^{-1}$, 
the energy-norm bound of Theorem~\ref{thm:2d} reduces to
\begin{align*}
\enorm{\bm w -\wph}
& \leq C\Big[\sq^{1/4}(N^{-1}\ln N)^{k+1}+N^{-(k+1)}\Big].
\end{align*}
If $k$ and $\sq$ are so small that $\sq^{1/4}(\ln N)^{k+1}\ll 1$,
we would expect a convergence rate of order $O(N^{-(k+1)})$
for the energy-norm error; otherwise, the error is likely to converge
at a rate of $O(\sq^{1/4}(N^{-1}\ln N)^{k+1})$.
These observations concur with our numerical results in Section~\ref{section:numerical:experiments}.
\end{remark}

\begin{remark}
In $T_6$ no integral terms arise from the domain boundary, which allows us to dispense with 
penalty terms in the definition~\eqref{flux:diffusion:2d} of the numerical flux, unlike
traditional LDG methods
\cite{Castillo2002,ChengJiangStynes2023,CYM2022,Zhu:2dMC}.
\end{remark}

\begin{remark}
	Our LDG method does not employ any penalty terms.
	If one uses a numerical flux with penalty in the LDG equations~(\ref{LDG:scheme:2d}), e.g., 
	$\widehat{\pph}^{\star}_{i,y}=\widehat{\pph}_{i,y}+\lambda_i\jump{\uph}_{i,y}$
	and $\widehat{\qph}^{\star}_{x,j}=\widehat{\qph}_{x,j}+\mu_j\jump{\uph}_{x,j}$
	for $i,j=0,1,\dots,N$,
	where  $\widehat{\pph}_{i,y}$ and $\widehat{\qph}_{x,j}$
	are defined in (\ref{flux:diffusion:2d}) and the penalty parameters $\lambda_i=\mu_j=\sq^{1/2}$ for $i,j=0,1,\dots,N$,
	then modify our error analysis accordingly,  one obtains an optimal-order
	error estimate in the corresponding energy and balanced norms.
	But the convergence rate is heavily influenced by the error from the boundary penalty terms; 
	consequently, the theoretical result for the $H^1$-error
	is only a suboptimal convergence rate  $O((N^{-1}\ln N)^{k+1/2})$, 
	which is a half-order inferior to the numerical results. 
\end{remark}

\begin{remark}
	In our analysis we considered the standard Shishkin mesh, 
	whose coarse and fine components are each piecewise uniform.
	If instead a randomly perturbed quasi-uniform mesh is used in $\dreg$, 
	then a slightly worse bound can be derived in Lemmas 4.1 and 4.3,
	and finally the term $\sq^{1/4} N^{-\hat{k}}$ that appears in 
	the balanced-norm error bound will be changed to  $\sq^{1/4} N^{-k}$.
	Because of the positive power of the small parameter,
	the final convergence rate is only slightly affected.
	Furthermore, our analysis  
	is applicable to other layer-adapted meshes such as 
	Bakhvalov-type meshes which are not locally uniform inside the layer regions,
	but the convergence result will be significantly different.
	The extension of our methodology to other types of layer-adapted meshes deserves further investigation 
	and we shall explore this in future work, as we state in Section~\ref{sec:concluding:remarks}.
\end{remark}

\begin{remark}\label{remark:b:nonconstant}[non-constant $b$] 
If $b\geq 2\beta^2>0$ is non-constant 
and $b\in W^{1,\infty}(\bar{\Omega})$, 
then Theorem~\ref{thm:2d} is still valid: one alters
the error analysis by using a modification $\prou^{\star} $ 
of the projector $\prou$ in $\dreg$:
\[ 
\prou^{\star} u=\begin{cases}
\prou u             & \text{in } \Omega\backslash\dreg,\\
\Pi_b u            & \text{in } \dreg,
\end{cases}
\]
	where for each $z\in L^2(\Omega)$,
	the weighted $L^2$ projection $\Pi_b z\in \spc$ is defined by 
	\begin{align}\label{weighted:L2}
	\dual{b\Pi_b z}{\vphu}_{K_{ij}}= \dual{bz}{\vphu}_{K_{ij}}
	\quad \forall \vphu\in \mathcal{Q}^{k}(K_{ij}) \quad 
	\forall K_{ij}\in\Omega_N.
	\end{align}
Then the bounds on the terms in~\eqref{energy:equation:2d} remain unchanged except for $T_4$ and $T_5$.
In the subdivision of~$T_4$, we only have a different expression
for the former two terms included in  $T_{45}$, denoted by
\[
T_{45}^1:= \dual{u-\Pi_b u}{(\xi_p)_x}_{\dreg}
\text{ and }
T_{45}^2:=\sum_{j=N/4+1}^{3N/4}\sum_{i=N/4+1}^{3N/4-1}\dual{\average{u-\Pi_b u}_{i,y}}{\jump{\xi_p}_{i,y}}_{J_j}.
\]
We now show how to bound $T_{45}^1$ and $T_{45}^2$ satisfactorily.
	
For $T_{45}^1$,
on each $K_{ij}$ set $b_{\mathrm{avg}}|_{K_{ij}}=(h_ih_j)^{-1}\int_{K_{ij}} b(x,y)\,dx\,dy$,
so $b_{\mathrm{avg}}$~is a piecewise constant approximation of~$b$.
The definition \eqref{weighted:L2}, a Cauchy-Schwarz inequality and an inverse inequality
give us
\begin{align}
\left|T_{45}^1\right| &= \left|\dual{u-\Pi_b u}{(\xi_p)_x}_{\dreg}\right|
= \left|\sum_{K_{ij}\in \dreg}\dual{b_{\mathrm{avg}}^{-1}(b_{\mathrm{avg}}-b)
	(u-\Pi_b u)}{(\xi_p)_x}_{K_{ij}}\right|
\nonumber\\
&\leq C\sum_{K_{ij}\in \dreg}\norm{b_{\mathrm{avg}}-b}_{L^{\infty}(K_{ij})}
\norm{u-\Pi_b u}_{K_{ij}}H^{-1}\norm{\xi_p}_{K_{ij}}
\nonumber\\
\label{T45:1}
&\leq C \norm{b}_{W^{1,\infty}(\Omega)} \norm{u-\Pi_b u}_{\dreg}\norm{\xi_p}_{\dreg}
\leq CN^{-(k+1)}\norm{\xi_p}_{\dreg},
\end{align}
where we used an approximation property for the projector $\Pi_b$
that is an analogue of~\eqref{bound:eu:L2:2d}.

For $T_{45}^2$, we shall use the supercloseness result
\begin{equation}\label{supclose}
\norm{\Pi_b z-\Pi z}_{K_{ij}}\leq CN^{-(k+2)}\norm{z}_{H^{k+1}(K_{ij})}
\quad \forall K_{ij}\in\dreg.
\end{equation}
To verify \eqref{supclose}, use the definitions \eqref{weighted:L2}
and \eqref{L2:prj:2d} to obtain
\[
\dual{b(\Pi_b z-\Pi z)}{\vphu}_{K_{ij}}=\dual{b(z-\Pi z)}{\vphu}_{K_{ij}}
=\dual{(b-b_{\mathrm{avg}})(z-\Pi z)}{\vphu}_{K_{ij}}.
\]
Take $\vphu=\Pi_b z-\Pi z$ and then a Cauchy-Schwarz inequality yields
\begin{align*}
\norm{\Pi_b z-\Pi z}_{K_{ij}}^2&\leq (2\beta^2)^{-1}
\dual{b(\Pi_b z-\Pi z)}{\Pi_b z-\Pi z}_{K_{ij}}
\\
&=(2\beta^2)^{-1}
\dual{(b-b_{\mathrm{avg}})(z-\Pi z)}{\Pi_b z-\Pi z}_{K_{ij}}
\\
&\leq C\norm{b_{\mathrm{avg}}-b}_{L^{\infty}(K_{ij})}
\norm{z-\Pi z}_{K_{ij}}\norm{\Pi_b z-\Pi z}_{K_{ij}}
\\
&\leq C \norm{b}_{W^{1,\infty}(\Omega)} N^{-(k+2)}\norm{z}_{H^{k+1}(K_{ij})}
\norm{\Pi_b z-\Pi z}_{K_{ij}}
\end{align*}
for $K_{ij}\in \dreg$,
which leads to \eqref{supclose}.
Now an inverse inequality gives
\begin{align}\label{supclose:L0}
\norm{\Pi_b z-\Pi z}_{L^{\infty}(K_{ij})}
\leq CN\norm{\Pi_b z-\Pi z}_{K_{ij}}
\leq CN^{-(k+1)}\norm{z}_{H^{k+1}(K_{ij})},\quad
K_{ij}\in \dreg.
\end{align}
For $K_{ij}, K_{i+1,j}\in \dreg$	and the smooth component $\bar{u}$,
by a Cauchy-Schwarz inequality, an inverse inequality, \eqref{supclose:L0}
and \eqref{reg:smooth}  one has
\begin{align*}
&\left|\sum_{j=N/4+1}^{3N/4}\sum_{i=N/4+1}^{3N/4-1}
\dual{\average{\Pi_b \bar{u}-\Pi \bar{u}}_{i,y}}{\jump{\xi_p}_{i,y}}_{J_j}
\right|\nonumber\\
&\leq
C\sum_{j=N/4+1}^{3N/4}\sum_{i=N/4+1}^{3N/4-1}N^{-1/2}\norm{\Pi_b \bar{u}-\Pi \bar{u}}_{L^{\infty}(K_{ij}\cup K_{i+1,j} )}
N^{1/2}\norm{\xi_p}_{K_{ij}\cup K_{i+1,j}}
\nonumber\\
&\leq CN^{-(k+1)}\sum_{j=N/4+1}^{3N/4}\sum_{i=N/4+1}^{3N/4-1}
\norm{\bar{u}}_{H^{k+1}(K_{ij}\cup K_{i+1,j})}\norm{\xi_p}_{K_{ij}\cup K_{i+1,j}}
\leq
CN^{-k+1}\norm{\xi_p}_{\dreg}.
\end{align*}
Combining this inequality with \eqref{avg:regular}, 
and observing that \eqref{avg:layer} will hold when $\Pi$ is replaced by~$\Pi_b$, we obtain  
\begin{align}\label{T45:2}
\left|T_{45}^2\right|=
\left|\sum_{j=N/4+1}^{3N/4}\sum_{i=N/4+1}^{3N/4-1}
\dual{\average{u-\Pi_b u}_{i,y}}{\jump{\xi_p}_{i,y}}_{J_j}
\right|\leq
CN^{-\hat{k}}\norm{\xi_p}_{\dreg}.
\end{align}	
Adding \eqref{T45:1}, \eqref{T45:2} 
and the existing bounds for the remaining terms of $T_{45}$
gives 
$T_{45}\leq CN^{-\hat{k}}\norm{\xi_p}_{\dreg}$
and thus the same bound for $T_4$. 
We can bound $T_5$ in a similar way. 
Thus, we again obtain the error bound of Theorem~\ref{thm:2d}.
\end{remark}

\section{Numerical experiments}
\label{section:numerical:experiments}

In this section, we present numerical results for the LDG method
on the Shishkin mesh \eqref{mesh:point} for two singularly perturbed 
reaction-diffusion problems---one with a known solution and one with an unknown solution. 
All the calculations were carried out in MATLAB R2017a. 
The resulting discrete system of the FEM is solved directly using the Matlab backslash operator 
after diagonal preconditioning of the linear system.  All integrals were evaluated using the $5\times5$ Gauss-Legendre quadrature rule.

Convergence rates in the energy
and balanced norms are computed by the formula
\[
r_S :=
	\frac{ \ln(\bm e_N/\bm e_{2N})}{\ln (2\ln N/\ln (2N)) },
\]
where $\bm e_N$ is the observed error when $N$ elements are used.
This quantity $r_S$ measures the convergence rate in the form $O((N^{-1}\ln N)^{r_S})$,
which is natural for convergence on Shishkin meshes.

\begin{example}\label{exa:1}
	Consider the linear constant-coefficient problem 
\[
-\sq\Delta u+2u=f  \text{ in } \Omega=(0,1)\times(0,1),
\quad 	u =0   \text{ on }\partial\Omega,
\]	
with $f$ chosen such that the exact solution is
	\[
	u(x,y)=\left[
	\frac{e^{-x/\sqrt\sq}-e^{-(1-x)/\sqrt\sq}}{1-e^{-1/\sqrt\varepsilon}}-\cos(\pi x)\right]
	\left[
	\frac{e^{-y/\sqrt\sq}-e^{-(1-y)/\sqrt\sq}}{1-e^{-1/\sqrt\varepsilon}}-\cos(\pi y)\right].
	\]
For any nonnegative integer $m$, this exact solution has precisely the layer behaviour that one assumes in  Lemma~\ref{lemma:prop:2d}.
\end{example}

Fix $k=2$ and $N=64$. Two numerical solutions $\uph$ and their pointwise errors $|u-\uph|$ 
are displayed in Figure \ref{ex1:Fig:nmr:err} for $\sq=10^{-4}$ and $\sq=10^{-8}$.
One can see that the LDG method produces an accurate solution with 
sharp boundary layers and no oscillations near these layers, 
demonstrating the ability of the LDG method to capture boundary layers.

In Tables~\ref{table:ex1:energy} and~\ref{table:ex1:balanced}
we present the values of 
$\enorm{\bm w -\wph}$ and $\ba{\bm w -\wph}$
and their convergence rates.
The convergence rate of the energy-norm error is
$O(\sq^{1/4}(N^{-1}\ln N)^{k+1})$ in most cases, except for the data in boldface font, where $\sq^{1/4}(\ln N)^{k+1}<1$ and the convergence rate is $O(N^{-(k+1)})$. 
This agrees with the predictions of
Theorem~\ref{thm:2d} and Remark~\ref{rmk:energy:error}.
On the other hand, the balanced-norm errors attain the optimal order 
$O((N^{-1}\ln N)^{k+1})$ in all cases, which agrees with 
the \emph{expected orders of convergence} (EOC in our tables)
that are predicted by Theorem~\ref{thm:2d}.

\begin{example}\label{exa:2}
	Consider the variable-coefficient problem 
	\begin{equation*}
	\begin{aligned}
	-\sq\Delta u+\big(1+x^2y^2 e^{xy/2}\big)u&=\tanh\big[(x+1)(y+1)\big]
	\ \text{ in } \Omega=(0,1)\times(0,1),\\
	u &=0 \ \text{ on }\partial\Omega,
	\end{aligned}
	\end{equation*}
	whose exact solution is unknown.
\end{example}

Fix $k=2$ and $N=64$. In Figure \ref{ex2:Fig:nmr:err} we plot
two numerical solutions  computed by the LDG method on the Shishkin mesh. 
We use $|\uph_{64}-\widetilde{\uph}_{128}|$ to represent
its pointwise error, where $\widetilde{\uph}_{128}$ is the numerical solution computed by the LDG method using the two-mesh principle \cite{Farrell2000}. To be specific,
if $\uph_{N}$ is the computed solution on the Shishkin mesh $\{(x_i,y_j):i,j=0, 1, ..., N\}$, then
 $\widetilde{\uph}_{2N}$ is computed on the mesh $\{(x_{i/2},y_{j/2}):i,j=0,1,...,2N\}$,
where $x_{i+1/2}:=(x_{i}+x_{i+1})/2$ and $y_{j+1/2}:=(y_{j}+y_{j+1})/2$. 
Again we see that the LDG method produces solutions
with no visible oscillations in the solution.

Tables \ref{table:ex2:energy} and \ref{table:ex2:balanced} display 
the energy-norm errors $\enorm{\wph_{N}-\widetilde{\wph}_{2N}}$, 
balanced-norm errors $\ba{\wph_N-\widetilde{\wph}_{2N}}$
and their convergence rates. 
One can see that these convergence rates 
are, respectively, $O(\sq^{1/4}(N^{-1}\ln N)^{k+1}+N^{-(k+1)})$
and $O((N^{-1}\ln N)^{k+1})$ in general, 
once again agreeing with Theorem~\ref{thm:2d}.

\begin{table}[h]
	\setlength{\tabcolsep}{3pt}
	\footnotesize
	\centering
	\caption{Energy-norm errors and convergence rates in Example \ref{exa:1}.}
	\label{table:ex1:energy}
	\begin{tabular}{ccccc ccccc}
		\toprule
		& & \multicolumn{2}{c}{$k=0$} & \multicolumn{2}{c}{$k=1$} & \multicolumn{2}{c}{$k=2$}  & \multicolumn{2}{c}{$k=3$}\\
		\cmidrule(r){3-4} \cmidrule(r){5-6} \cmidrule(r){7-8} \cmidrule(r){9-10}
		$\sq$ & $N$ & Error & $r_S$ & Error& $r_S$  & Error & $r_S$ & Error & $r_S$\\
		\midrule
		$10^{-4}$  & 8   & 2.1412E-01 & ---      & 3.5465E-02 & ---      & 1.3681E-02 & ---      & 5.9709E-03 & ---      \\
		& 16  & 1.1264E-01 & \bf{1.5842} & 1.6812E-02 & 1.8410 & 5.3725E-03 & 2.3053 & 1.8072E-03 & 2.9475 \\
		& 32  & 5.9400E-02 & \bf{1.3615} & 7.1698E-03 & 1.8132 & 1.5639E-03 & 2.6258 & 3.6292E-04 & 3.4156 \\
		& 64  & 3.1519E-02 & \bf{1.2405} & 2.7124E-03 & 1.9029 & 3.6785E-04 & 2.8331 & 5.3918E-05 & 3.7326 \\
		& 128 & 1.6788E-02 & \bf{1.1687} & 9.4905E-04 & 1.9483 & 7.5658E-05 & 2.9340 & 6.6092E-06 & 3.8943 \\
		& 256 & 8.9549E-03 & \bf{1.1230} & 3.1788E-04 & 1.9545 & 1.4313E-05 & 2.9753 & 7.2009E-07 & 3.9614 \\
		& 512 & 4.7758E-03 & \bf{1.0926} & 1.0470E-04 & 1.9302 & 2.5610E-06 & 2.9908 & 7.2665E-08 & 3.9862 \\
		$10^{-8}$  & 8   & 2.2152E-01 & ---      & 2.2936E-02 & ---      & 2.0605E-03 & ---      & 6.1865E-04 & ---      \\
		& 16  & 1.1271E-01 & \bf{1.6663} & 5.9520E-03 & 3.3270 & 5.8386E-04 & 3.1101 & 1.8587E-04 & 2.9658 \\
		& 32  & 5.6630E-02 & \bf{1.4645} & 1.6032E-03 & 2.7909 & 1.6247E-04 & 2.7216 & 3.7302E-05 & 3.4170 \\
		& 64  & 2.8366E-02 & \bf{1.3534} & 4.5073E-04 & 2.4840 & 3.7915E-05 & 2.8486 & 5.5404E-06 & 3.7331 \\
		& 128 & 1.4199E-02 & \bf{1.2839} & 1.3074E-04 & 2.2963 & 7.7822E-06 & 2.9379 & 6.7905E-07 & 3.8945 \\
		& 256 & 7.1071E-03 & \bf{1.2367} & 3.8578E-05 & 2.1810 & 1.4712E-06 & 2.9766 & 7.3980E-08 & 3.9615 \\
		& 512 & 3.5577E-03 & \bf{1.2027} & 1.1459E-05 & 2.1098 & 2.6317E-07 & 2.9913 & 7.4651E-09 & 3.9863 \\
		$10^{-12}$ & 8   & 2.2156E-01 & ---      & 2.2785E-02 & ---      & 1.5212E-03 & ---      & 9.6995E-05 & ---      \\
		& 16  & 1.1271E-01 & \bf{1.6670} & 5.7367E-03 & \bf{3.4016} & 1.9824E-04 & 5.0258 & 1.9174E-05 & 3.9982 \\
		& 32  & 5.6600E-02 & \bf{1.4655} & 1.4378E-03 & \bf{2.9442} & 2.8747E-05 & 4.1083 & 3.7427E-06 & 3.4760 \\
		& 64  & 2.8331E-02 & \bf{1.3548} & 3.6018E-04 & \bf{2.7098} & 4.8140E-06 & 3.4983 & 5.5449E-07 & 3.7381 \\
		& 128 & 1.4169E-02 & \bf{1.2855} & 9.0297E-05 & \bf{2.5668} & 8.6217E-07 & 3.1908 & 6.7933E-08 & 3.8953 \\
		& 256 & 7.0852E-03 & \bf{1.2385} & 2.2667E-05 & \bf{2.4699} & 1.5428E-07 & 3.0747 & 7.4003E-09 & 3.9617 \\
		& 512 & 3.5427E-03 & \bf{1.2047} & 5.7002E-06 & \bf{2.3992} & 2.6953E-08 & 3.0323 & 7.4673E-10 & 3.9863 \\
		\bottomrule
	\end{tabular}
\end{table}

\begin{table}[h]
	\setlength{\tabcolsep}{3pt}
	\footnotesize
	\centering
	\caption{Balanced-norm errors and convergence rates in Example \ref{exa:1}.}
	\label{table:ex1:balanced}
	\begin{tabular}{ccccc ccccc}
		\toprule
		& & \multicolumn{2}{c}{$k=0$} & \multicolumn{2}{c}{$k=1$} & \multicolumn{2}{c}{$k=2$}  & \multicolumn{2}{c}{$k=3$}\\
		\cmidrule(r){3-4} \cmidrule(r){5-6} \cmidrule(r){7-8} \cmidrule(r){9-10}
		$\sq$ & $N$ & Error & $r_S$ & Error & $r_S$  & Error & $r_S$ & Error & $r_S$\\
		\midrule
		$10^{-4}$  & 8   & 4.4334E-01 & ---      & 1.4461E-01 & ---      & 7.1644E-02 & ---      & 3.3035E-02 & ---      \\
		& 16  & 2.7238E-01 & 1.2014 & 7.8450E-02 & 1.5083 & 2.9108E-02 & 2.2214 & 1.0091E-02 & 2.9247 \\
		& 32  & 1.6140E-01 & 1.1134 & 3.6792E-02 & 1.6110 & 8.6937E-03 & 2.5711 & 2.0432E-03 & 3.3982 \\
		& 64  & 9.3081E-02 & 1.0775 & 1.4964E-02 & 1.7611 & 2.0813E-03 & 2.7986 & 3.0693E-04 & 3.7110 \\
		& 128 & 5.2783E-02 & 1.0525 & 5.5119E-03 & 1.8530 & 4.3383E-04 & 2.9093 & 3.8035E-05 & 3.8741 \\
		& 256 & 2.9599E-02 & 1.0336 & 1.9334E-03 & \bf{1.8720} & 8.2856E-05 & 2.9584 & 4.1775E-06 & 3.9470 \\
		& 512 & 1.6438E-02 & 1.0223 & 6.7488E-04 & \bf{1.8293} & 1.4915E-05 & 2.9802 & 4.2373E-07 & 3.9772 \\
		$10^{-8}$  & 8   & 4.3963E-01 & ---      & 1.4297E-01 & ---      & 7.1989E-02 & ---      & 3.3309E-02 & ---      \\
		& 16  & 2.6965E-01 & 1.2056 & 7.6999E-02 & 1.5263 & 2.9301E-02 & 2.2170 & 1.0176E-02 & 2.9245 \\
		& 32  & 1.6054E-01 & 1.1034 & 3.6106E-02 & 1.6113 & 8.7617E-03 & 2.5685 & 2.0607E-03 & 3.3979 \\
		& 64  & 9.2997E-02 & 1.0688 & 1.4598E-02 & 1.7728 & 2.0991E-03 & 2.7972 & 3.0967E-04 & 3.7102 \\
		& 128 & 5.2832E-02 & 1.0491 & 5.2793E-03 & 1.8869 & 4.3780E-04 & 2.9082 & 3.8390E-05 & 3.8733 \\
		& 256 & 2.9603E-02 & 1.0351 & 1.7792E-03 & 1.9435 & 8.3647E-05 & 2.9577 & 4.2179E-06 & 3.9464 \\
		& 512 & 1.6430E-02 & 1.0233 & 5.7439E-04 & 1.9651 & 1.5062E-05 & 2.9798 & 4.2792E-07 & 3.9769 \\
		$10^{-12}$ & 8   & 4.3967E-01 & ---      & 1.4296E-01 & ---      & 7.1992E-02 & ---      & 3.3311E-02 & ---      \\
		& 16  & 2.6966E-01 & 1.2056 & 7.6974E-02 & 1.5269 & 2.9303E-02 & 2.2169 & 1.0177E-02 & 2.9245 \\
		& 32  & 1.6053E-01 & 1.1036 & 3.6090E-02 & 1.6116 & 8.7624E-03 & 2.5685 & 2.0608E-03 & 3.3979 \\
		& 64  & 9.2983E-02 & 1.0690 & 1.4586E-02 & 1.7734 & 2.0993E-03 & 2.7972 & 3.0970E-04 & 3.7102 \\
		& 128 & 5.2818E-02 & 1.0493 & 5.2710E-03 & 1.8885 & 4.3784E-04 & 2.9082 & 3.8394E-05 & 3.8733 \\
		& 256 & 2.9592E-02 & 1.0353 & 1.7727E-03 & 1.9472 & 8.3655E-05 & 2.9577 & 4.2183E-06 & 3.9464 \\
		& 512 & 1.6405E-02 & 1.0253 & 5.6920E-04 & 1.9745 & 1.5063E-05 & 2.9798 & 4.2795E-07 & 3.9769 \\
		$EOC$		&  &  & 1.0000 &  & 2.0000 &  & 3.0000 &  & 4.0000  \\
		\bottomrule
	\end{tabular}
\end{table}

\begin{table}[h]
	\setlength{\tabcolsep}{3pt}
	\footnotesize
	\centering
	\caption{Energy-norm errors and convergence rates in Example \ref{exa:2}.}
	\label{table:ex2:energy}
	\begin{tabular}{ccccc ccccc}
		\toprule
		& & \multicolumn{2}{c}{$k=0$} & \multicolumn{2}{c}{$k=1$} & \multicolumn{2}{c}{$k=2$}  & \multicolumn{2}{c}{$k=3$}\\
		\cmidrule(r){3-4} \cmidrule(r){5-6} \cmidrule(r){7-8} \cmidrule(r){9-10}
		$\sq$ & $N$ & Error & $r_S$ & Error& $r_S$  & Error & $r_S$ & Error & $r_S$\\
		\midrule
		$10^{-4}$  & 8   & 6.1983E-02 & ---      & 4.9279E-02 & ---      & 3.1381E-02 & ---      & 1.8802E-02 & ---      \\
		& 16  & 4.4270E-02 & \bf{0.8300} & 3.0021E-02 & 1.2223 & 1.4923E-02 & 1.8332 & 7.2120E-03 & 2.3633 \\
		& 32  & 2.9954E-02 & \bf{0.8312} & 1.4391E-02 & 1.5644 & 5.1216E-03 & 2.2754 & 1.8666E-03 & 2.8758 \\
		& 64  & 1.8972E-02 & \bf{0.8940} & 5.7360E-03 & 1.8008 & 1.3516E-03 & 2.6078 & 3.6289E-04 & 3.2062 \\
		& 128 & 1.1424E-02 & \bf{0.9412} & 2.0372E-03 & 1.9206 & 3.0390E-04 & 2.7688 & 7.2035E-05 & 2.9999 \\
		& 256 & 6.6434E-03 & \bf{0.9686} & 6.7682E-04 & 1.9691 & 6.5259E-05 & 2.7489 & ---        & ---    \\
		& 512 & 3.7725E-03 & \bf{0.9835} & 2.1586E-04 & 1.9862 & ---        & ---    & ---        & ---    \\
		$10^{-8}$  & 8   & 3.7461E-02 & ---      & 5.6853E-03 & ---      & 3.1490E-03 & ---      & 1.8769E-03 & ---      \\
		& 16  & 1.9127E-02 & \bf{1.6579} & 3.0916E-03 & 1.5025 & 1.4887E-03 & 1.8478 & 7.1270E-04 & 2.3881 \\
		& 32  & 9.7901E-03 & \bf{1.4249} & 1.4500E-03 & 1.6109 & 5.0607E-04 & 2.2956 & 1.7934E-04 & 2.9357 \\
		& 64  & 5.0337E-03 & \bf{1.3022} & 5.7349E-04 & 1.8158 & 1.3100E-04 & 2.6457 & 3.1447E-05 & 3.4082 \\
		& 128 & 2.5963E-03 & \bf{1.2283} & 2.0272E-04 & 1.9293 & 2.8168E-05 & 2.8516 & 4.2398E-06 & 3.7176 \\
		& 256 & 1.3418E-03 & \bf{1.1795} & 6.7093E-05 & 1.9759 & 5.4229E-06 & 2.9441 & ---        & ---    \\
		& 512 & 6.9434E-04 & \bf{1.1450} & 2.1322E-05 & 1.9923 & ---        & ---    & ---        & ---    \\
		$10^{-12}$ & 8   & 3.7135E-02 & ---      & 2.8491E-03 & ---      & 3.4516E-04 & ---      & 1.8782E-04 & ---      \\
		& 16  & 1.8706E-02 & \bf{1.6912} & 7.6520E-04 & 3.2423 & 1.4992E-04 & 2.0568 & 7.1263E-05 & 2.3901 \\
		& 32  & 9.3721E-03 & \bf{1.4704} & 2.2736E-04 & 2.5821 & 5.0649E-05 & 2.3088 & 1.7927E-05 & 2.9363 \\
		& 64  & 4.6896E-03 & \bf{1.3554} & 7.2153E-05 & 2.2468 & 1.3099E-05 & 2.6475 & 3.1395E-06 & 3.4106 \\
		& 128 & 2.3459E-03 & \bf{1.2851} & 2.3038E-05 & 2.1181 & 2.8147E-06 & 2.8528 & 4.1996E-07 & 3.7322 \\
		& 256 & 1.1735E-03 & \bf{1.2378} & 7.2454E-06 & 2.0671 & 5.4109E-07 & 2.9467 & ---        & ---    \\
		& 512 & 5.8702E-04 & \bf{1.2039} & 2.2390E-06 & 2.0410 & ---        & ---   & ---         & ---    \\
		\bottomrule
	\end{tabular}
\end{table}

\begin{table}[ht]
	\setlength{\tabcolsep}{3pt}
	\footnotesize
	\centering
	\caption{Balanced-norm errors and convergence rates in Example \ref{exa:2}.}
	\label{table:ex2:balanced}
	\begin{tabular}{ccccc ccccc}
		\toprule
		& & \multicolumn{2}{c}{$k=0$} & \multicolumn{2}{c}{$k=1$} & \multicolumn{2}{c}{$k=2$}  & \multicolumn{2}{c}{$k=3$}\\
		\cmidrule(r){3-4} \cmidrule(r){5-6} \cmidrule(r){7-8} \cmidrule(r){9-10}
		$\sq$ & $N$ & Error & $r_S$ & Error & $r_S$  & Error & $r_S$ & Error & $r_S$\\
		\midrule
		$10^{-4}$  & 8   & 4.0057E-01 & ---      & 3.5249E-01 & ---      & 2.2535E-01 & ---      & 1.3534E-01 & ---      \\
		& 16  & 2.9053E-01 & 0.7921 & 2.0805E-01 & 1.3004 & 1.0716E-01 & 1.8333 & 5.2129E-02 & 2.3529 \\
		& 32  & 1.9084E-01 & 0.8942 & 1.0007E-01 & 1.5572 & 3.7065E-02 & 2.2588 & 1.3720E-02 & 2.8401 \\
		& 64  & 1.1716E-01 & 0.9550 & 4.0531E-02 & 1.7693 & 9.9153E-03 & 2.5813 & 2.8465E-03 & 3.0789 \\
		& 128 & 6.9002E-02 & 0.9823 & 1.4572E-02 & 1.8980 & 2.3016E-03 & 2.7096 & 6.5048E-04 & 2.7387 \\
		& 256 & 3.9583E-02 & 0.9931 & 4.8755E-03 & 1.9564 & 5.2821E-04 & 2.6301 & ---        & ---    \\
		& 512 & 2.2301E-02 & 0.9972 & 1.5620E-03 & 1.9784 & ---        & ---    & ---        & ---    \\
		$10^{-8}$  & 8   & 4.0113E-01 & ---      & 3.5052E-01 & ---      & 2.2318E-01 & ---      & 1.3318E-01 & ---      \\
		& 16  & 2.8985E-01 & 0.8014 & 2.0592E-01 & 1.3119 & 1.0532E-01 & 1.8522 & 5.0477E-02 & 2.3928 \\
		& 32  & 1.8968E-01 & 0.9022 & 9.8594E-02 & 1.5670 & 3.5880E-02 & 2.2910 & 1.2675E-02 & 2.9402 \\
		& 64  & 1.1611E-01 & 0.9608 & 3.9745E-02 & 1.7786 & 9.2875E-03 & 2.6458 & 2.2217E-03 & 3.4089 \\
		& 128 & 6.8241E-02 & 0.9861 & 1.4214E-02 & 1.9077 & 1.9958E-03 & 2.8528 & 3.0185E-04 & 3.7033 \\
		& 256 & 3.9093E-02 & 0.9955 & 4.7292E-03 & 1.9665 & 3.8452E-04 & 2.9427 & ---        & ---    \\
		& 512 & 2.2006E-02 & 0.9987 & 1.5061E-03 & 1.9888 & ---        & ---    & ---        & ---    \\
		$10^{-12}$ & 8   & 4.0113E-01 & ---      & 3.5050E-01 & ---      & 2.2316E-01 & ---      & 1.3316E-01 & ---      \\
		& 16  & 2.8984E-01 & 0.8014 & 2.0590E-01 & 1.3120 & 1.0530E-01 & 1.8524 & 5.0460E-02 & 2.3932 \\
		& 32  & 1.8967E-01 & 0.9022 & 9.8579E-02 & 1.5671 & 3.5868E-02 & 2.2913 & 1.2664E-02 & 2.9413 \\
		& 64  & 1.1610E-01 & 0.9608 & 3.9737E-02 & 1.7787 & 9.2810E-03 & 2.6465 & 2.2145E-03 & 3.4136 \\
		& 128 & 6.8233E-02 & 0.9861 & 1.4211E-02 & 1.9078 & 1.9925E-03 & 2.8545 & 2.9630E-04 & 3.7318 \\
		& 256 & 3.9088E-02 & 0.9955 & 4.7277E-03 & 1.9666 & 3.8281E-04 & 2.9477 & ---        & ---    \\
		& 512 & 2.2003E-02 & 0.9987 & 1.5055E-03 & 1.9889 & ---        & ---    & ---        & ---    \\
		$EOC$		&  &  & 1.0000 &  & 2.0000 &  & 3.0000 &  & 4.0000  \\
		\bottomrule
	\end{tabular}
\end{table}

\begin{figure}[h]
	\begin{minipage}{0.49\linewidth}
		\vspace{2pt}
		\centerline{\includegraphics[width=1.0\textwidth]{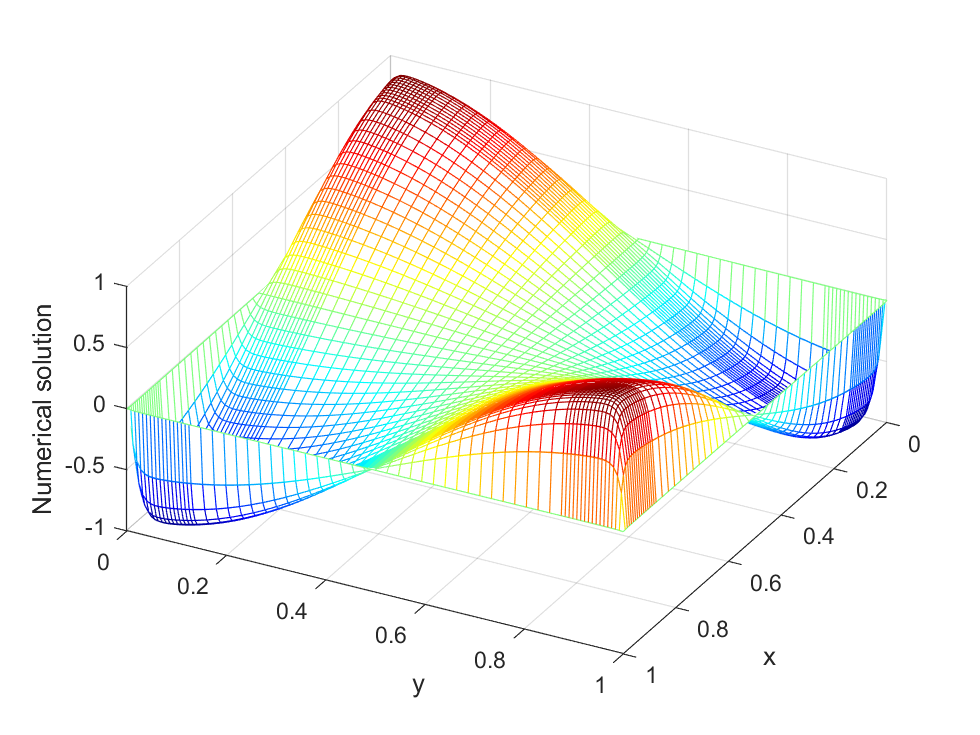}}
		\centerline{Numerical solution $\uph_{64}$ for $\sq=10^{-4}$}
		\vspace{2pt}
		\centerline{\includegraphics[width=1.0\textwidth]{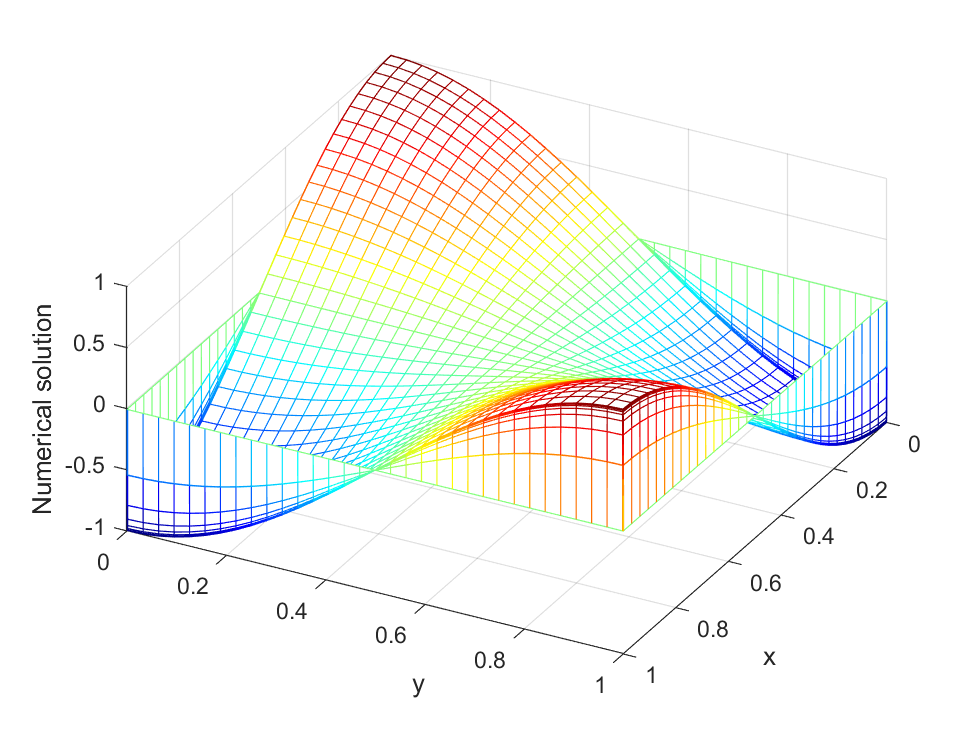}}
		\centerline{Numerical solution $\uph_{64}$ for $\sq=10^{-8}$}
	\end{minipage}
	\begin{minipage}{0.49\linewidth}
		\vspace{2pt}
		\centerline{\includegraphics[width=1.0\textwidth]{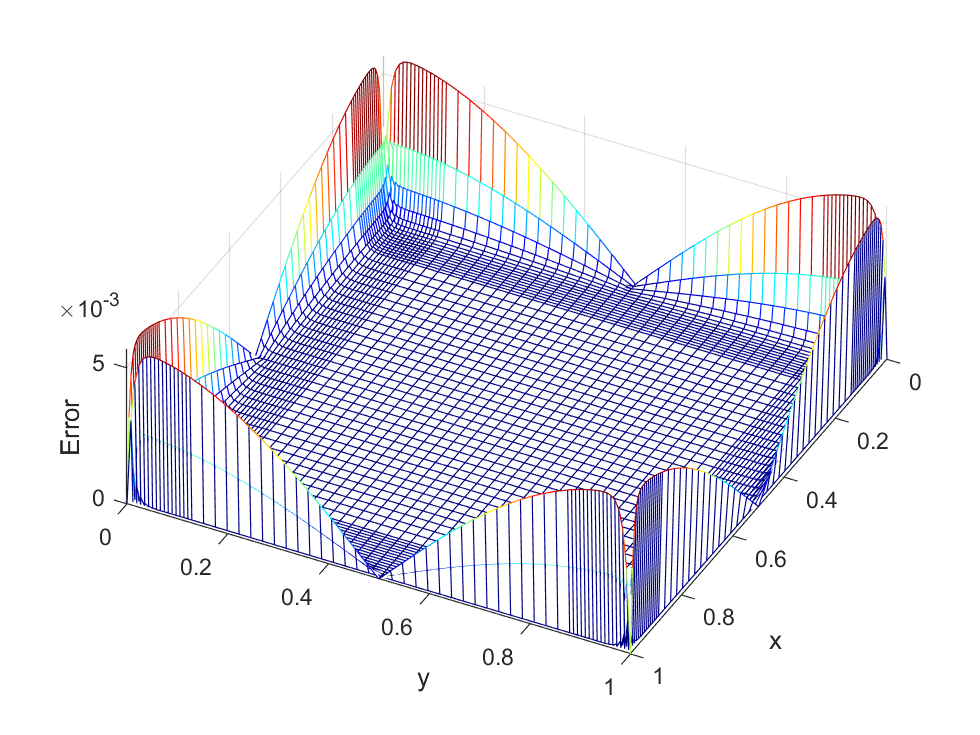}}
		\centerline{Pointwise error $|u-\uph_{64}|$ for $\sq=10^{-4}$}
		\vspace{2pt}
		\centerline{\includegraphics[width=1.0\textwidth]{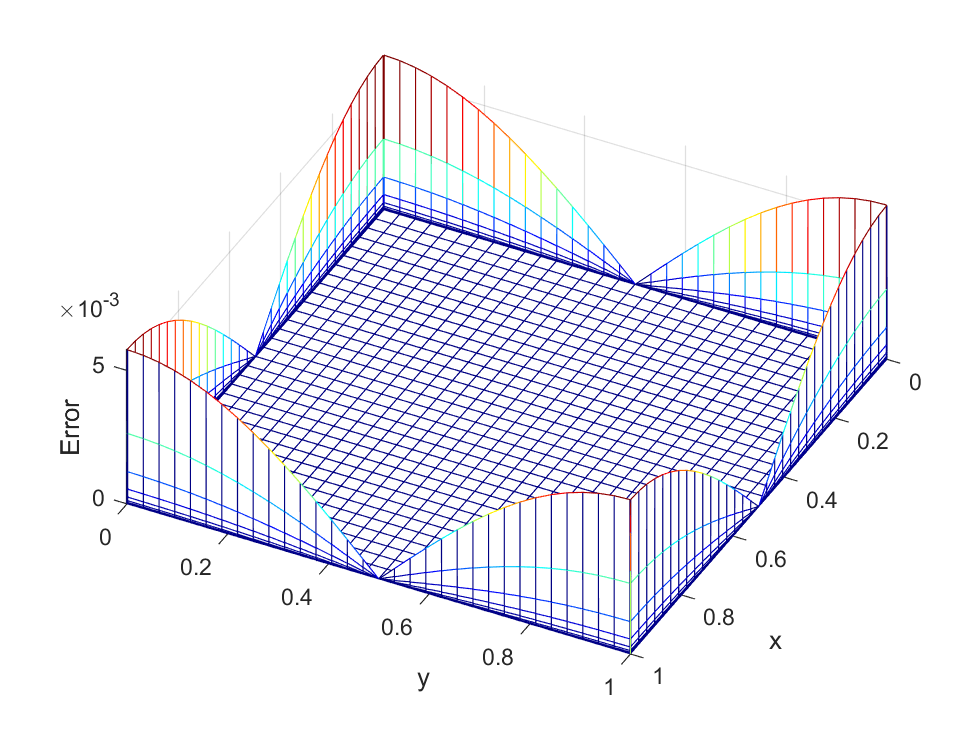}}
		\centerline{Pointwise error $|u-\uph_{64}|$ for $\sq=10^{-8}$}
	\end{minipage}
	\caption{Numerical solution $\uph_{64}$ and pointwise error $|u-\uph_{64}|$ in Example \ref{exa:1}.
	Here $k=2$.}
	\label{ex1:Fig:nmr:err}
\end{figure}

\begin{figure}[h]
	\begin{minipage}{0.49\linewidth}
		\vspace{3pt}
		\centerline{\includegraphics[width=1.0\textwidth]{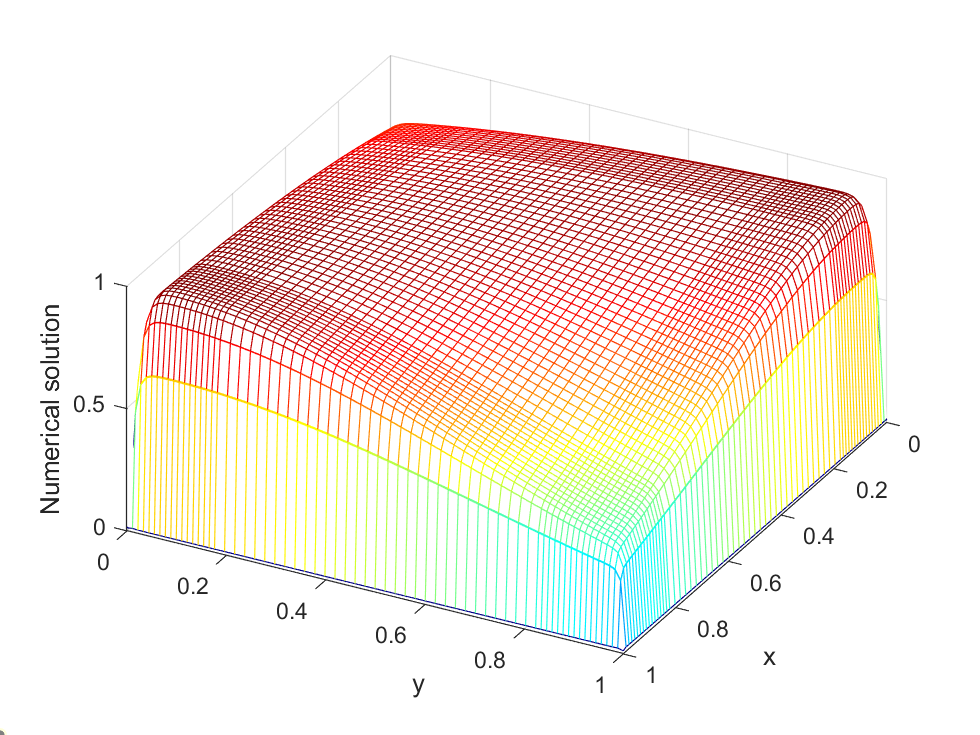}}
		\centerline{Numerical solution $\uph_{64}$ for $\sq=10^{-4}$}
		\vspace{3pt}
		\centerline{\includegraphics[width=1.0\textwidth]{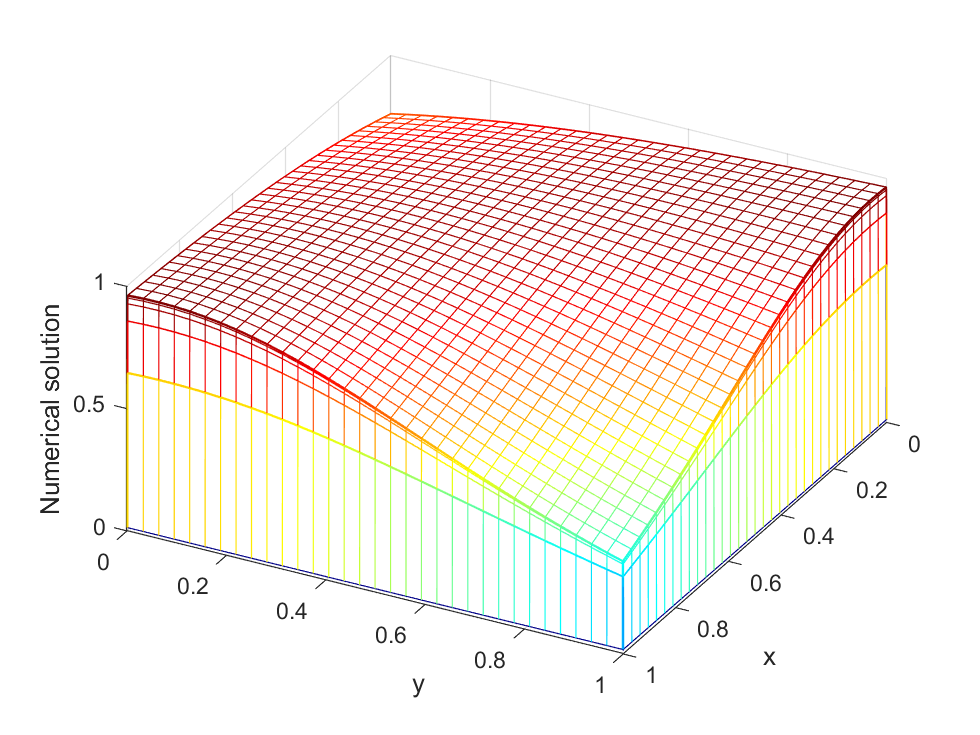}}
		\centerline{Numerical solution $\uph_{64}$ for $\sq=10^{-8}$}
	\end{minipage}
	\begin{minipage}{0.49\linewidth}
		\vspace{3pt}
		\centerline{\includegraphics[width=1.0\textwidth]{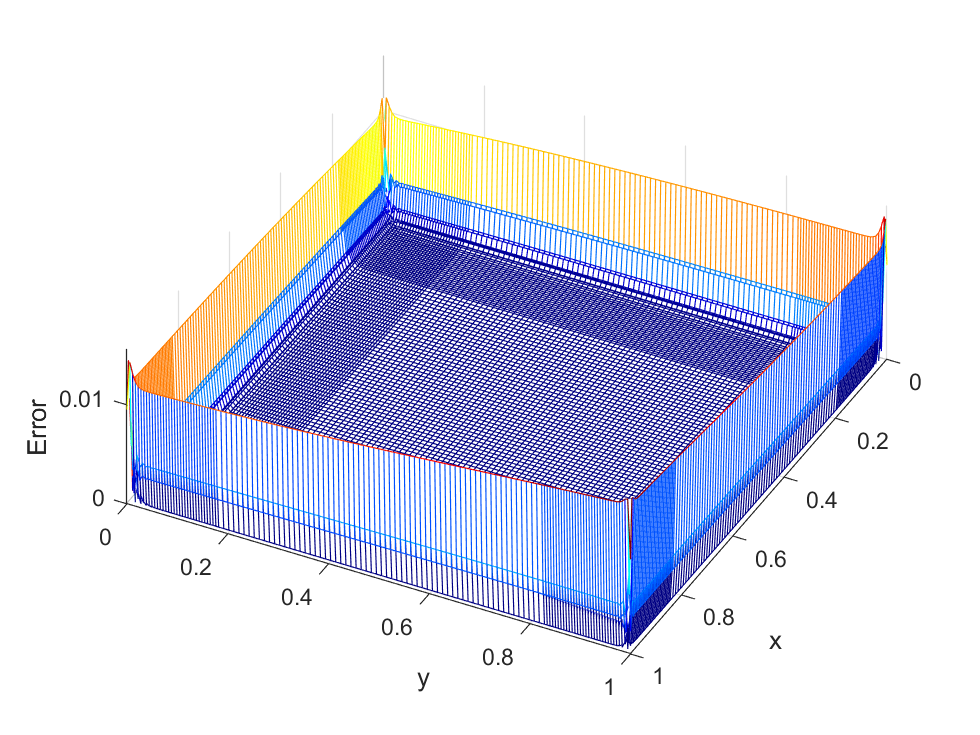}}
		\centerline{Pointwise error $|\uph_{64}-\widetilde{\uph}_{128}|$ for $\sq=10^{-4}$}
		\vspace{3pt}
		\centerline{\includegraphics[width=1.0\textwidth]{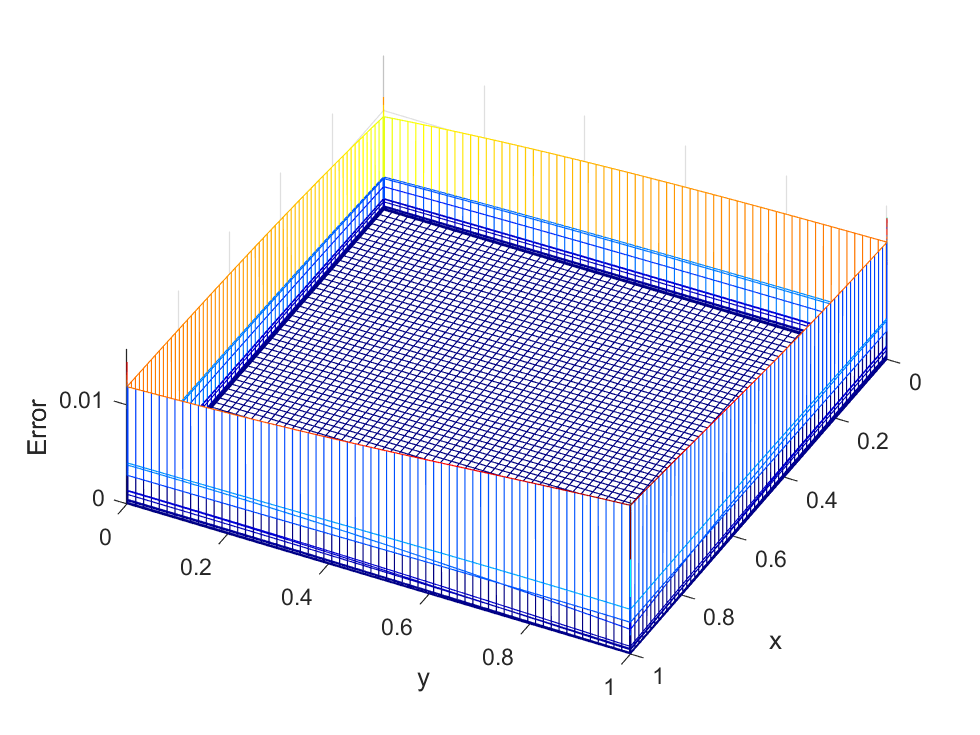}}
		\centerline{Pointwise error $|\uph_{64}-\widetilde{\uph}_{128}|$ for $\sq=10^{-8}$}
	\end{minipage}
	\caption{Numerical solution $\uph_{64}$ and pointwise error $|\uph_{64}-\widetilde{\uph}_{128}|$ in Example \ref{exa:2}. Here $k=2$.}
	\label{ex2:Fig:nmr:err}
\end{figure}

\section{Concluding remarks}\label{sec:concluding:remarks}

In this paper we considered a singularly perturbed reaction-diffusion problem
posed on the unit square and derived an optimal-order balanced-norm error estimate for the 
LDG method on Shishkin meshes by introducing layer-upwind numerical fluxes, 
which are a new and very effective way of choosing numerical fluxes. 
The terminology ``layer-upwind" means that on the fine Shishkin mesh, 
the value of the flux on each element is chosen at 
the point where the layer is weakest,
somewhat analogously to the use of ``upwinding" in discretisations of 
singularly perturbed convection-diffusion problems. 
Meanwhile, on the coarse Shishkin mesh one uses a standard central flux.
Our layer-upwind flux doesn't require any penalty parameters at the domain boundary
and leads to optimal-order $O((N^{-1}\ln N)^{k+1})$ error bounds 
in a balanced norm associated with this problem.
This is the best theoretical result that has been proved for the LDG method
applied to singularly perturbed reaction-diffusion problems,
and numerical experiments show that our theoretical bounds are sharp. 
In future work we aim to use similar ideas in applying the LDG method
to solve other types of singularly perturbed problems on layer-adapted meshes.

\bibliography{ChengWangStynes_full_version_4}
\bibliographystyle{amsplain}

\end{document}